\documentclass[10pt]{article}
\usepackage[a4paper,
            left=1in,
            right=1in,
            top=1in,
            bottom=1in,
            footskip=.25in
]{geometry}

%---- packages ----
\usepackage[sc]{titlesec}
\usepackage{blindtext,color}
\usepackage{microtype}
\usepackage{scrextend}
\usepackage{upgreek}
\usepackage{stmaryrd}
\usepackage{subcaption}
\usepackage{tocloft}
% Algorithms
\usepackage{algorithm}
\usepackage[noend]{algpseudocode}
% Symbols & Math
\usepackage{tcolorbox}
\usepackage{amssymb}
\usepackage{amsmath}
\usepackage{amsthm}
\usepackage{amsfonts}
\usepackage{shadethm}
\usepackage{bbold}
% Fancy header
\usepackage{xspace}
\usepackage{dsfont}
\usepackage{marginnote}
\usepackage{bm}
\usepackage{graphicx}
\usepackage[nottoc]{tocbibind}
\usepackage[citecolor=blue,colorlinks=true,linkcolor=blue]{hyperref}%
\usepackage{cleveref}
\usepackage{booktabs,multirow}
\usepackage{csvsimple}
\usepackage{tikz}
\usetikzlibrary{cd}
\usepackage{sectsty}

% <Add more packages here>
%-------------------

%---- defitions ----
\title{Preconditioner Design via the Bregman Divergence}
\author{Andreas Bock \& Martin S. Andersen}
\date{}
\setlength{\parindent}{0pt}

% Font

% ---------------------------------------------
% Theorems & Environments
% ---------------------------------------------
\newcounter{mythm}
\newcounter{mylem}
\newcounter{myrem}

\newcounter{mypro}
\newcounter{mycol}

\newcounter{mydef}

\newtheorem{definition}[mydef]{Definition}
\newtheorem{remark}[myrem]{Remark}
\newtheorem{lemma}[mylem]{Lemma}

\newtheorem{proposition}[mypro]{Proposition}
\newtheorem{corollary}[mycol]{Corollary}

% Question & answers environment

% Shorthands

\newcommand{\Divergence}{\mathcal{D}}
\newcommand{\BregmanLogDet}{\mathcal{D}_\textnormal{LD}}
\newcommand{\vonNeumann}{\mathcal{D}_\textnormal{VN}}
\newcommand{\Frobenius}{\mathcal{D}_\textnormal{F}}

\newcommand{\blkdiag}{\textnormal{\texttt{blkdiag}}}

\newcommand{\expect}[1]{\mathds{E}\left[ #1 \right]}

\newcommand{\inv}{^{-1}}
\newcommand{\invhalf}{^{-\half}}
\newcommand{\transp}{^\top}
\newcommand{\invtransp}{^{-\top}}
\newcommand{\Hermitian}{^*}
\newcommand{\invHermitian}{^{-*}}

\newcommand{\invhalfHermitian}{^{{-\halfHermitian}}}

\newcommand{\trace}[1]{\Tr\big( #1 \big)}
\newcommand{\logdet}[1]{\log \det( #1 )}
\newcommand{\half}{{\frac{1}{2}}}

\newcommand{\halfHermitian}{{\frac{*}{2}}}

\newcommand{\Cn}{\mathbb{C}^n}
\newcommand{\Sn}{\mathbb{H}^n}
\newcommand{\Sr}{\mathbb{H}^r}
\newcommand{\Snp}{{\Sn_+}}
\newcommand{\Snpp}{{\Sn_{++}}}
\newcommand{\Srpp}{{\Sr_{++}}}

% bolds

\newcommand{\bH}{\mathbf{H}}

\newcommand{\bL}{\mathbf{L}}
\newcommand{\bR}{\mathbf{R}}
\newcommand{\bS}{\mathbf{S}}
\newcommand{\bX}{\mathbf{X}}
\newcommand{\bK}{\mathbf{K}}
\newcommand{\bQ}{\mathbf{Q}}
\newcommand{\bG}{\mathbf{G}}
\newcommand{\bdx}{\mathbf{\delta x}}
\newcommand{\bb}{\mathbf{b}}
\newcommand{\bd}{\mathbf{d}}

\newcommand{\Normal}{\mathcal{N}}
\newcommand{\diff}{\,\textnormal{d}}

\newcommand{\Rm}{\mathbb{R}^m}
\newcommand{\rn}{\mathbb{R}^n}

\newcommand{\bD}{\mathbf{D}}
\newcommand{\bDh}{\mathbf{D}^\half}
\newcommand{\bDi}{\mathbf{D}^{-1}}

\newcommand{\Rnn}{\mathbb{R}^{n\times n}}
\newcommand{\Rmm}{\mathbb{R}^{m\times m}}
\newcommand{\Rnm}{\mathbb{R}^{n\times m}}

\newcommand{\Rnr}{\mathbb{R}^{n\times r}}
\newcommand{\Rrr}{\mathbb{R}^{r\times r}}
\newcommand{\Rsr}{\mathbb{R}^{s\times r}}

\newcommand{\Cnn}{\mathbb{C}^{n\times n}}

\newcommand{\Cnr}{\mathbb{C}^{n\times r}}
\newcommand{\Crr}{\mathbb{C}^{r\times r}}

\newcommand{\MatVec}{\pi}

\DeclareMathOperator{\diag}{diag}
\DeclareMathOperator{\Tr}{trace}
\DeclareMathOperator{\rank}{rank}
\DeclareMathOperator{\range}{range}
\DeclareMathOperator{\interior}{int}
\DeclareMathOperator{\domain}{dom}
\DeclareMathOperator{\relint}{ri}
\DeclareMathOperator{\aff}{aff}
\DeclareMathOperator*{\minimise}{minimise}
\DeclareMathOperator*{\subto}{s.t.}

% approximations
\newcommand{\randsvd}[1]{{#1}\langle\Omega\rangle}
\newcommand{\randsvdpower}[1]{\randsvd{{#1}^q}}
\newcommand{\Nystrom}[1]{{#1}^\text{Nys}\langle\Omega\rangle}
\newcommand{\NystromQ}[1]{{#1}^\text{Nys}\langle\Theta\rangle}

% preconditioners
\newcommand{\nonscaled}[1]{\tilde{#1}_r}
\newcommand{\nonscaledrand}[1]{\randsvd{\tilde{#1}}}
\newcommand{\nonscaledrandpower}[1]{\randsvdpower{\tilde{#1}}}
\newcommand{\nonscaledNys}[1]{\Nystrom{\tilde{#1}}}
\newcommand{\nonscaledNysQ}[1]{\NystromQ{\tilde{#1}}}

\newcommand{\scaled}[1]{\widehat{#1}_r}
\newcommand{\scaledrand}[1]{\randsvd{\widehat{#1}}}
\newcommand{\scaledrandpower}[1]{\randsvdpower{\widehat{#1}}}
\newcommand{\scaledNys}[1]{\Nystrom{\widehat{#1}}}
\newcommand{\scaledNysQ}[1]{\NystromQ{\widehat{#1}}}

\newtheorem{theorem}[mythm]{Theorem}
% <Add more defitions here>
%--------------------------

\begin{document}
\maketitle
\begin{abstract}
\noindent
We study a preconditioner for a Hermitian positive definite linear system, which
is obtained as the solution of a matrix nearness
problem based on the Bregman log determinant divergence. The preconditioner
is of the form of a Hermitian positive definite matrix plus a low-rank matrix.
For this choice of structure, the generalised eigenvalues of the 
preconditioned matrix are easily calculated, and we show under which conditions
the preconditioner minimises the $\ell_2$ condition number
of the preconditioned matrix. We develop practical numerical approximations of
the preconditioner based on the randomised singular value decomposition
(SVD) and the Nystr\"om approximation and provide corresponding approximation
results. Furthermore, we prove that the Nystr\"om approximation is in fact also a
matrix approximation in a range-restricted Bregman divergence and establish
several connections between this divergence and matrix nearness problems
in different measures. Numerical examples are provided to support the
theoretical results.
\end{abstract}
%\tableofcontents
\section{Introduction}
We study preconditioning of a Hermitian matrix $S = A + B \in\Cnn$, 
where $A=QQ\Hermitian \in\Cnn$ is positive definite and $B \in\Cnn$ is 
Hermitian positive semidefinite. The factor $Q \in\Cnn$
does not need to be a Cholesky factor and can be a symmetric square root,
for instance. Finding $x\in\Cn$ such that
\begin{equation}\label{eq:Sxb}
Sx = b,
\end{equation}
encapsulates regularised least squares, Gaussian process regression 
\cite{rasmussen2006gaussian} and is a ubiquitous problem in scientific computing
\cite{zhang2013constraint,benzi2005numerical}. It also appears in Schur complements
of saddle-point formulations of variational data assimilation problems 
\cite{freitag2018low,freitag2020numerical}.
It is often the case in large-scale numerical linear algebra that interaction with
a matrix is only feasible through its action on vectors. Iterative methods such as
the conjugate gradient method are therefore of interest for which preconditioning 
is often a necessity for an efficient and accurate solution, leading to the so-called
\emph{preconditioned} conjugate gradient method (PCG). In this paper, we 
investigate preconditioners for \eqref{eq:Sxb} based on low-rank approximations
of the positive semidefinite term of $S$. We can write $S$ as
\[
S = Q(I + G)Q\Hermitian,
\]
where $G = Q\inv B Q\invHermitian$. Based on this decomposition we study the two
following preconditioners
\begin{subequations}\label{eq:S_preconditioners}
\begin{align}
& \nonscaled{S} =  A + B_r,\label{eq:S_preconditioners:nonscaled}\\
& \scaled{S} = Q(I + G_r)Q\Hermitian,\label{eq:S_preconditioners:scaled}
\end{align}
\end{subequations}
where $B_r$ and $G_r$ are both obtained as truncated SVDs of $B$ and $G$, respectively,
where $r<\rank(B)$.
While \eqref{eq:S_preconditioners:nonscaled} may seem natural, we show that our
proposed preconditioner \eqref{eq:S_preconditioners:scaled} is always an improvement
and has many useful properties from a practical and theoretical point
of view. We also demonstrate that it appears naturally as the minimiser of a matrix
nearness problem in the Bregman divergence. This is intuitive since it is
desirable to seek a preconditioner that is, in some sense, close to the matrix
$S$.\\

Our main contribution is the development of a general framework for the design
of preconditioners for \eqref{eq:Sxb} based on the Bregman log determinant 
divergence. First,
Section \ref{sec:spectral_analysis} examines the two preconditioners in
\eqref{eq:S_preconditioners} more closely based on straightforward eigenvalue analysis
and presents a simple example to develop intuition. We then introduce the Bregman
divergence framework in Section \ref{sec:precond_bregman}.
We prove that our proposed preconditioner \eqref{eq:S_preconditioners:scaled}
minimises the Bregman divergence to the original matrix $S$
in Theorem \ref{thm:whSS_minimiser} (and in some cases, the condition number of
the preconditioned  matrix), and demonstrate how it can improve the convergence
of PCG in Section \ref{sec:iterative_solution_methods}. In Section
\ref{sec:matrix_nearness_problems}, we make a connection between
different norm minimisation problems and the Bregman divergence
framework. We also derive the Nystr\"om approximation in several different ways,
one of which is as a minimiser of a range-restricted Bregman divergence. Next,
Section \ref{sec:rsvd} explores practical numerical methods based
on randomised linear algebra for the approximations of our preconditioner in the big data 
regime where the truncated SVD becomes computationally intractable. Section \ref{sec:numerical_results} supports
our theoretical findings with numerical experiments for equation \eqref{eq:Sxb} using 
various preconditioners. Section \ref{sec:summary} contains a summary.

\subsection{Related Work}

Preconditioning and matrix approximation are
closely linked since a preconditioner $\mathcal{P}$ is typically chosen such that, in 
some sense, $\mathcal{P} \approx S$, subject to the requirement that 
$\mathcal{P}$ respects some computational budget in terms of storage, action on 
vectors, and its factorisation. In this paper, we establish a natural connection between 
matrix nearness problems
in the Bregman divergence and preconditioning. The use of divergences in linear algebra
is not novel. \cite{dhillon2008matrix} proposed the use of the Bregman divergences for 
matrix approximation problems motivated by their connection to exponential families
of distributions. Also relevant to our work is \cite{kulis2009low} where the authors
investigate extensions of divergences to the low-rank matrices in the context of
kernel learning.
We build on these ideas in Section \ref{sec:matrix_nearness_problems} and show the
connection between the Bregman divergence and the Nystr\"om approximation.
The latter is becoming increasingly popular in numerical linear algebra,
in particular in the context of randomised matrix approximation methods.
Many such methods
are based on \emph{sketching} \cite{woodruff2014sketching}, a technique whereby a matrix
$A\in\Cnn$ is multiplied by some \emph{sketching matrix}
$\Omega\in\Rnr$, $r<n$ producing a compressed matrix $A\Omega$ which is used to
compute approximations of $A$. As mentioned, randomised methods select $\Omega$ as a
random matrix providing the setting for probabilistic bounds on the accuracy of the
approximation, see \cite{martinsson2020randomized,halko2011finding}.
Methods such as the randomised SVD and the Nystr\"om approximation offer
practical alternatives to the truncated SVD with both affordable computational cost and 
certain theoretical guarantees. Such approximations have long been popular in the 
kernel-based learning community \cite{williams2000using,drineas2005nystrom}. Preconditioning
is becoming more relevant for the data science and machine learning communities
with the increasing demand for handling large-scale problems. Recently, the work of
\cite{frangella2021randomized} introduced a Nystr\"om-based preconditioner for PCG for
matrices $\mu I + A$ for some scalar $\mu>0$, where $A$ is symmetric positive semidefinite.
This strategy has also been applied to the \emph{alternating method of multipliers}
\cite{zhao2022nysadmm}, and sketching more generally to a stochastic quasi-Newton method
in \cite{frangella2022sketchysgd}. Applications such as large-scale
Gaussian process regression can also benefit from the development of the preconditioners
introduced in this work since gradient-based parameter optimisation requires expensive
linear solves for optimising kernel hyperparameters \cite{wenger2022preconditioning}.

\subsection{Notation \& Preliminaries}

\begin{definition}
Let $\Sn$ denote the space of Hermitian $n\times n$ matrices. $\Snp \subset \Sn$ denotes
the cone of positive semidefinite matrices and $\Snpp = \interior \Snp$ the positive definite cone.
\end{definition}

\begin{definition}
Let $\lambda(A) = \{\lambda_1(A),\ldots,\lambda_n(A)\}$, with $\lambda_1(A) \geq
\ldots \geq \lambda_n(A)$, denote the 
ordered eigenvalues of a matrix $A\in\Cnn$
Further, let $\sigma(A) = \{\sigma_1(A),\ldots,\sigma_n(A)\}$ denote the 
similarly ordered singular values of a matrix $A\in\Cnn$. For nonsingular normal
matrices $A$ we denote the $\ell_2$ condition number by $\kappa_2(A)
= \frac{\sigma_1(A)}{\sigma_n(A)}$.
\end{definition}

We denote by $A:B :=\trace{A\Hermitian B}$ the trace inner product between two matrices
$A$ and $B$. $I$ denotes the identity matrix, sometimes with a subscript to denote the
dimension if it is not clear from the context.
$\|\cdot\|_F$ denotes the Frobenius norm and $\|\cdot\|_2$ the $\ell_2$ 
norm. $A^+$ denotes the Moore-Penrose inverse of $A$. $A_r$ will in general be used
to denote a rank $r$ approximation of $A$ obtained via a truncated SVD. When $A$ is
positive definite we define the induced norm $\|\cdot\|_A$ by the relation
\[
\|x\|_A^2 = x\Hermitian A x.
\]

Convex analysis is a natural tool for studying the Bregman divergence, so we
recall some elementary definitions, see \cite{rockafellar1997convex} for
more details. We restrict our attention to spaces of finite dimension. The \emph{relative interior}, $\relint C$, of a convex set $C$ is the 
subset of $C$ which can be considered as an affine subset of $C$, i.e.,
\[
\relint C = \{ x\in C \,|\, \exists\; \textnormal{neighbourhood}\;V  \textnormal{of}\;x \; \textnormal{such that}\;V\cap \aff C \subseteq C \},
\]
where $\aff C$ is the affine hull of $C$.
Let $\mathcal{X}$ be some finite-dimensional real inner product space.
The \emph{effective domain} of a function $\phi: \mathcal{X} \rightarrow
\mathbb{R} \cup \{-\infty,+\infty\}$ is defined by
\[
\domain \phi = \{ x \in \mathcal{X}\,|\, \phi(x) < \infty \}.
\]
A convex function $\phi$ is \emph{proper} if it never attains the value $-\infty$
and that there exists a point $x\in\mathcal{X}$ for which $\phi(x)$ is finite.

\iffalse
\subsection{Organisation}

In Section \ref{sec:spectral_analysis} we describe basic properties
of the preconditioners in \eqref{eq:S_preconditioners}. Next, Section
\ref{sec:precond_bregman} contains the main theoretical results of this
paper related to the Bregman log determinant divergence. We show that
the preconditioner \eqref{eq:S_preconditioners:scaled} is a minimiser
of this divergence over a set of admissible preconditioners for 
\eqref{eq:Sxb}, and prove in which cases \eqref{eq:S_preconditioners:scaled}
also minimises the condition number of the preconditioned matrix.
 Section 
\ref{sec:rsvd}

\ref{sec:numerical_results}
\ref{sec:syn}

\ref{sec:synthetic_data}
\ref{sec:effect_low_rank}
\ref{sec:numerical_results:var}
\ref{sec:summary}

\fi
\section{Spectral and Other Properties}\label{sec:spectral_analysis}

In this section, we develop some intuition about the two preconditioners $\nonscaled{S}$ and $\scaled{S}$ defined in 
\eqref{eq:S_preconditioners} by straightforward analysis
of the eigenvalues of the preconditioned matrices $\nonscaled{S}\inv S$
and $\scaled{S}\inv S$. Their respective inverses are given by
\begin{align*}
\nonscaled{S}\inv & = Q\invHermitian ( I + Q\inv B_r Q\invHermitian)\inv Q\inv,\\
\scaled{S}\inv & = Q\invHermitian ( I + G_r)\inv Q\inv.
\end{align*}

\begin{lemma}\label{lemma:eigenvalues_preconditioned_matrices}
The eigenvalues of $\nonscaled{S}\inv S$ and $\scaled{S}\inv S$ satisfy the following
bounds for $i=1,\ldots,n$:
\begin{align*}
%& \lambda_i(\nonscaled{S}\inv S) \in \left[\frac{1}{1 + \lambda_1(G)}, 1 + \lambda_1(G)\right],\\
& \lambda_i(\nonscaled{S}\inv S) \in \left[1, 1 + \lambda_1(G)\right],\\
& \lambda_i(\scaled{S}\inv S) \in \left[1, 1 + \lambda_{r+1}(G)\right].
\end{align*}
\end{lemma}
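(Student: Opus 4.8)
The plan is to conjugate both preconditioned matrices by $Q\Hermitian$ so that they become similar to matrices built only from $I$, $G$, and the low-rank truncations, and then read off the eigenvalues. Writing $S = Q(I+G)Q\Hermitian$ with $G = Q\inv B Q\invHermitian$, note first that $G\in\Snp$: it is Hermitian, and $x\Hermitian G x = (Q\invHermitian x)\Hermitian B (Q\invHermitian x)\ge 0$. Using the inverses displayed just above the statement, $\nonscaled{S}\inv S = Q\invHermitian(I+Q\inv B_r Q\invHermitian)\inv(I+G)Q\Hermitian$ and $\scaled{S}\inv S = Q\invHermitian(I+G_r)\inv(I+G)Q\Hermitian$, so it suffices to bound the eigenvalues of $(I+Q\inv B_r Q\invHermitian)\inv(I+G)$ and of $(I+G_r)\inv(I+G)$, respectively.

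First I would handle $\scaled{S}$, the easy case. Since $G\in\Snp$, its truncated SVD coincides with eigenvalue truncation: if $G = U\Lambda U\Hermitian$ with $\Lambda = \diag(\lambda_1(G),\ldots,\lambda_n(G))$, then $G_r = U\Lambda_r U\Hermitian$ with $\Lambda_r = \diag(\lambda_1(G),\ldots,\lambda_r(G),0,\ldots,0)$, sharing the eigenvectors of $G$. Hence $(I+G_r)\inv(I+G) = U(I+\Lambda_r)\inv(I+\Lambda)U\Hermitian$ has eigenvalues $(1+\lambda_i(G))/(1+[\Lambda_r]_{ii})$, which equal $1$ for $i\le r$ and $1+\lambda_i(G)$ for $i>r$. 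As the $\lambda_i(G)$ are nonnegative and ordered, every such eigenvalue lies in $[1,\,1+\lambda_{r+1}(G)]$, as claimed.

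Next I would treat $\nonscaled{S}$. Put $F := Q\inv B_r Q\invHermitian$ and $E := Q\inv(B-B_r)Q\invHermitian$; both are in $\Snp$ (the SVD truncation of the positive semidefinite $B$ gives $B_r\in\Snp$ and $B-B_r\in\Snp$, and congruences preserve positive semidefiniteness), and $G = F + E$. Therefore $(I+F)\inv(I+G) = (I+F)\inv(I+F+E) = I + (I+F)\inv E$, so the eigenvalues of $\nonscaled{S}\inv S$ are $1+\mu$ where $\mu$ runs over the eigenvalues of $(I+F)\inv E$. That matrix is similar to $(I+F)\invhalf E (I+F)\invhalf\in\Snp$, so $\mu\ge 0$, giving the lower bound $1$. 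For the upper bound I would use the two Loewner-order facts $(I+F)\inv\preceq I$ (from $F\succeq 0$) and $E\preceq G$ (from $G-E = F\succeq 0$): then $(I+F)\invhalf E (I+F)\invhalf \preceq \lambda_1(E)(I+F)\inv \preceq \lambda_1(E)I$, and $\lambda_1(E)\le\lambda_1(G)$ by monotonicity of eigenvalues under the Loewner order, so $\mu\le\lambda_1(G)$. This yields $\lambda_i(\nonscaled{S}\inv S)\in[1,\,1+\lambda_1(G)]$.

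Everything except the $\nonscaled{S}$ upper bound is a direct eigendecomposition or a similarity rewrite, so the only place needing care is the chain of Loewner-order inequalities in the last paragraph; in a write-up I would isolate the elementary fact that $\lambda_1(M\half C M\half)\le\lambda_1(C)$ whenever $0\preceq M\preceq I$ and $C\in\Snp$ (which follows from $M\half C M\half\preceq\lambda_1(C)M\preceq\lambda_1(C)I$) and apply it with $M=(I+F)\inv$ and $C=E$. The conceptual point worth flagging is the additive splitting $G=F+E$ into positive semidefinite pieces — this is what replaces the shared-eigenbasis argument available in the scaled case, and it is also the reason the nonscaled bound degrades from $\lambda_{r+1}(G)$ to $\lambda_1(G)$.
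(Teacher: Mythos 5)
Your proof is correct and follows essentially the same route as the paper: the paper reduces the claim to bounds on the generalised Rayleigh quotients $x\Hermitian(I+G)x / x\Hermitian(I+Q\inv B_r Q\invHermitian)x$ and $x\Hermitian(I+G)x / x\Hermitian(I+G_r)x$, which is exactly what your similarity reduction plus the splitting $G = F + E$ with $F, E \in \Snp$ establishes. The only difference is that the paper simply asserts these Rayleigh-quotient bounds, whereas you supply the justification (shared eigenbasis in the scaled case, the Loewner-order chain in the nonscaled case), so your write-up is a more complete version of the same argument.
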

\begin{proof}
Recall that $B_r$ and $G_r$ are obtained as truncated SVDs of $B$ and $G$, respectively,
where $r<\rank(B)$. We obtain the result by observing the following bounds for the Rayleigh
quotients below, where $x\in\mathbb{C}^n$ and not identically zero:
\begin{subequations}\label{eq:Rayleigh}
\begin{align}
\frac{x\Hermitian (I + G) x}{x\Hermitian (I + Q\inv B_r Q\invHermitian) x} &\in \left[1, 1 + \lambda_1(G)\right],\label{eq:Rayleigh:nonscaled}\\
%\\frac{x\Hermitian (I + G) x}{x\Hermitian (I + Q\inv B_r Q\invHermitian) x} &\in \left[\frac{1}{1 + \lambda_1(G)}, 1 + \lambda_1(G)\right],\label{eq:Rayleigh:nonscaled}\\
 \frac{x\Hermitian (I + G) x}{x\Hermitian (I + G_r) x} &\in \left[1, 1 + \lambda_{r+1}(G)\right].\label{eq:Rayleigh:scaled}
\end{align}
\end{subequations}
\end{proof}

The bound \eqref{eq:Rayleigh:nonscaled} is of course pessimistic, 
but it is in general difficult to infer a tighter upper bound here
since $G$ and $Q\inv B_r Q\invHermitian$ are not a priori simultaneously diagonalisable.
Later, in Theorem \ref{thm:kappa2_minimisation}, we show that when $G$ does not have
full rank (but not necessarily low rank), $\scaled{S}$ minimises
the conditioner number of the preconditioned matrix among matrices of the form $A + W$,
where $W\in\Snp$ has at rank at most $r$. We defer a more detailed analysis to Section
\ref{sec:precond_bregman}, but this result motivates the study of
\eqref{eq:S_preconditioners}.\\

Next we present a simple example illustrating
the difference between
$\scaled{S}$ and $\nonscaled{S}$. Let $A$ and $B$ be diagonal $6\times 6$ matrices:
\begin{equation}\label{eq:AB_diagonal_example}
A = \diag(1.1, 1.05, 0.375, 0.05, 0.05, 0.05), \qquad B = \diag(1, 0.5, 0.25, 0.1, 0, 0).
\end{equation}
This results in
\begin{subequations}
\begin{align}
& G = \diag(\mathbf{0.9091}, 0.4762, 0.6667, \mathbf{2}, 0, 0),\label{eq:G_diagonal_example}\\
& Q = \diag(1.0488, 1.0247, 0.6124, 0.2236, 0.2236, 0.2236),
\end{align}
\end{subequations}
where, for ease of exposition, we truncate after the fourth decimal.
We observe $\rank(B) = 4$, and take $r=2$ in the low-rank approximations of the
positive semidefinite term. A truncated SVD will pick out the indices corresponding
to the $r$ largest numbers (in bold). On the other hand, we find that
\[
Q\inv B_r Q\invHermitian = \diag(\mathbf{0.9091}, \mathbf{0.4762}, 0, 0, 0, 0).
\]

The resulting generalised eigenvalues are shown in 
Figure \ref{fig:AB_diagonal_example}.
\begin{figure}
    \centering
    \includegraphics[scale=0.35
    ]{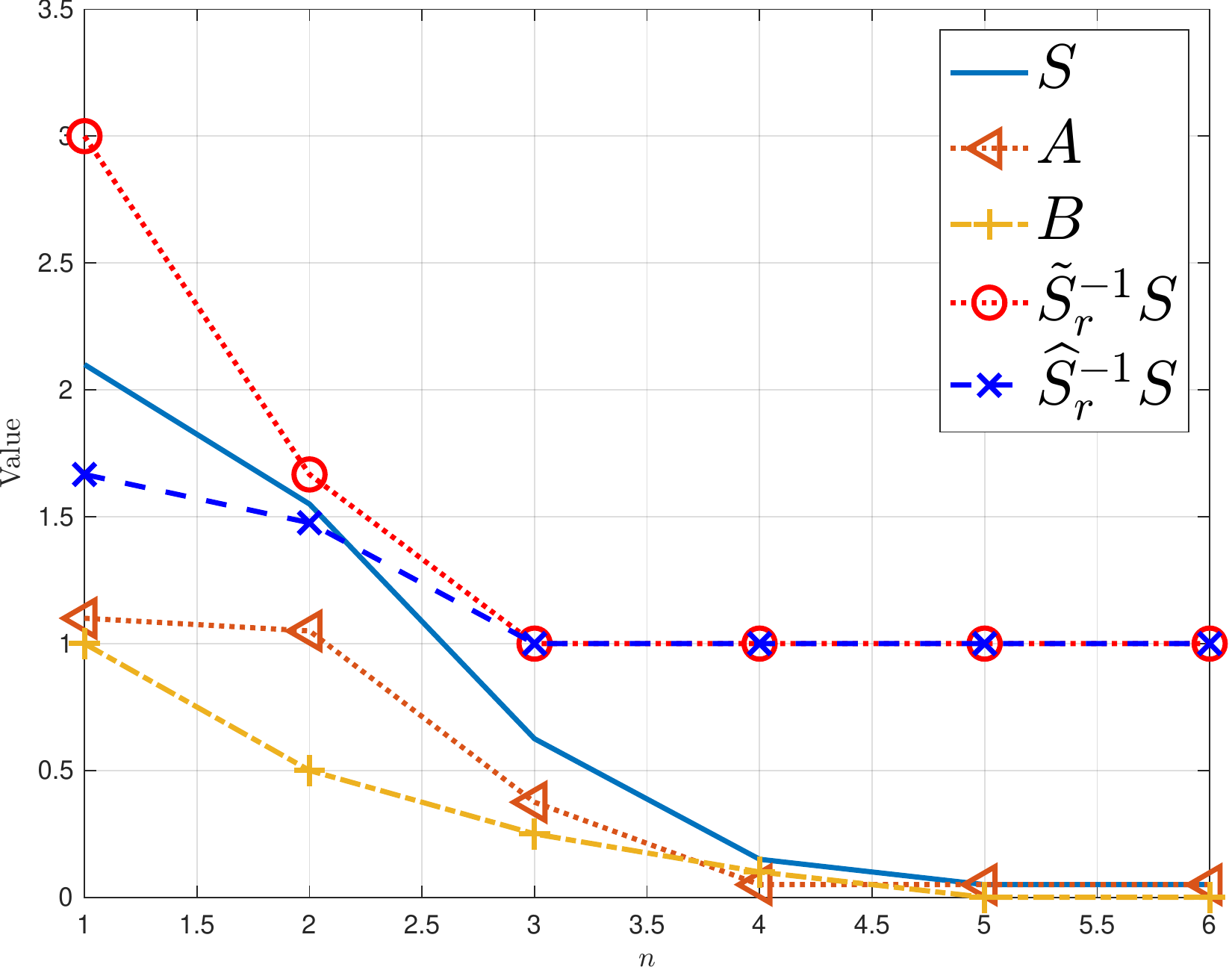}
    \caption{Eigenvalues of $S$ and generalised eigenvalues of the preconditioned system
    using the preconditioners $\nonscaled{S}$ and $\scaled{S}$.}
    \label{fig:AB_diagonal_example}
\end{figure}
Recall that
\begin{align*}
\nonscaled{S}\inv S & = Q\invHermitian (I + Q\inv B_r Q\invHermitian)\inv (I + G) Q\Hermitian,\\
\scaled{S}\inv S & = Q\invHermitian ( I + G_r)\inv (I + G) Q\Hermitian.
\end{align*}
%Note that for two $n\times n$ diagonal matrices $X$ and $Y$ we 
%have, for $i = 1,\ldots,n$:
%\[
%\lambda_i(XY) = \lambda_i(X)\lambda_i(Y).
%\]
For the instances of $A$ and $B$ in \eqref{eq:AB_diagonal_example} we therefore 
have, by direct calculation,
\[
\lambda_i(Q\inv B_r Q\invHermitian) = \lambda_i(B) \lambda_i(A)\inv, \quad i=1,\ldots, r,
\]
and
\[
\lambda_1(G_r) = B_{44} A_{44}\inv, \quad \lambda_2(G_r) = \lambda_1(B)\lambda_1(A)\inv,
%\]
\]
%where the eigenvalues are 
%and
%\[
%\lambda_i(X\inv Y) = \lambda_i(X\inv) \lambda_i(Y) = \lambda_{n-i+1}(X)\inv\lambda_i(Y).
%\]
%For the instances of $A$ and $B$ in \eqref{eq:AB_diagonal_example} we therefore  have:
%\[
%\lambda_i(Q\inv B_r Q\invHermitian) = \lambda_{i}(A)\inv \lambda_i(B), \quad
% \lambda_i(G_r) = \lambda_{n-i+1}(A)\inv \lambda_i(B),\quad i=1,\ldots, n,
%\]
which clearly shows the scaling effect. In particular, we note that
\[
\lambda_2(G_r) = \lambda_1(Q\inv B_r Q\invHermitian).
\]
There is no difference when the spectrum
of $A$ is flat, but if the spectrum of $A$ decays as above, then 
$\lambda_i(Q\inv B_r Q\invHermitian)$ will be dominated by $\lambda_i(G_r)$ in which case:
\[
1 \leq \lambda_i(\scaled{S}\inv S) < \lambda_i(\nonscaled{S}\inv S), \qquad i=1,\ldots, n.
\]
We highlight a situation where $\scaled{S} = \nonscaled{S}$. Suppose that we had defined
$B$ by reversing the order of the non-zero diagonal elements in \eqref{eq:AB_diagonal_example},
i.e.,
\[
B^\textnormal{rev} = \diag(0.1, 0.25, 0.5, 1, 0, 0).
\]
In this case,
\begin{equation}\label{eq:G_diagonal_example_reversed}
G^\textnormal{rev} = \diag(1.1\inv, 0.2381, \mathbf{1.3333}, \mathbf{20}, 0, 0),
\end{equation}
and
\[
Q\inv B_r^\textnormal{rev} Q\inv = \diag(0, 0, \mathbf{1.3333}, \mathbf{20}, 0, 0) = G_r^\textnormal{rev}.
\]
By comparing \eqref{eq:G_diagonal_example} and \eqref{eq:G_diagonal_example_reversed},
we notice the largest eigenvalues (in bold) have different indices, representing the loss of
information when prematurely truncating $B$ before scaling by the factors of $A\inv$.
This suggests that this scaling can, in some circumstances, improve the approximation
of $S$. While this approach uses $G$ explicitly, we show in Section \ref{sec:rsvd} 
economical ways of computing with it using only matrix-vector products.
In the general case where the basis is not given by the identity, analysis of the
generalised eigenvalues is of course insufficient to determine the quality of the
associated preconditioners, a point to which we return later.

\begin{corollary}\label{cor:scaled_equals_nonscaled}
$\scaled{S} = \nonscaled{S}$ when $A$ is a scaled identity.
\end{corollary}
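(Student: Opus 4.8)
The plan is to show that the two preconditioners coincide by reducing $\scaled{S}$ to the expression defining $\nonscaled{S}$. Recall that $\nonscaled{S}=A+B_r$ while $\scaled{S}=Q(I+G_r)Q\Hermitian=A+QG_rQ\Hermitian$, with $G=Q\inv BQ\invHermitian$; hence it suffices to prove $QG_rQ\Hermitian=B_r$ under the hypothesis $A=cI$ for some $c>0$.

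First I would note that $\scaled{S}$ is independent of the particular choice of factor $Q$ satisfying $QQ\Hermitian=A$: any other such factor has the form $Q'=QW$ for a unitary $W$, and then $G'=Q'\inv BQ'\invHermitian=W\Hermitian GW$, so the rank-$r$ truncation transforms as $G'_r=W\Hermitian G_rW$ (unitary conjugation permutes neither the singular values nor the corresponding invariant subspaces), whence $Q'(I+G'_r)Q'\Hermitian=Q(I+G_r)Q\Hermitian$. We may therefore take $Q=\sqrt{c}\,I$ without loss of generality.

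With this choice $G=Q\inv BQ\invHermitian=c\inv B$. Since $c>0$, the ordered singular values of $c\inv B$ are exactly $c\inv$ times those of $B$ and the associated singular subspaces are unchanged, so the truncated SVD obeys $G_r=c\inv B_r$. Substituting,
\[
\scaled{S}=Q(I+G_r)Q\Hermitian=\sqrt{c}\,\big(I+c\inv B_r\big)\sqrt{c}=cI+B_r=A+B_r=\nonscaled{S},
\]
as claimed. There is essentially no obstacle here; the only points requiring (minor) care are the invariance of $\scaled{S}$ under the choice of $Q$ — which can equally well be bypassed by fixing $Q=\sqrt{c}\,I$ from the outset — and the elementary observation that scaling a matrix by a positive constant commutes with rank-$r$ SVD truncation.
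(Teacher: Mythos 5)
Your proof is correct and follows essentially the same route as the paper: take $Q=\sqrt{c}\,I$, observe $G=c\inv B$, and use that rank-$r$ SVD truncation commutes with positive scaling to get $G_r=c\inv B_r=Q\inv B_r Q\invHermitian$. The only addition is your preliminary check that $\scaled{S}$ does not depend on the choice of factor $Q$, which the paper leaves implicit; it is a reasonable (and correct) extra precaution but not a different argument.
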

\begin{proof}
Let $A = \alpha I$ for some $\alpha > 0$. Then,
\[
G_r = (\alpha^{-1/2} B \alpha^{-1/2})_r =  \alpha\inv B_r = Q\inv B_r Q\invHermitian.
\]
\end{proof}
\iffalse
\begin{corollary}\label{cor:scaled_equals_nonscaled}
$\scaled{S} = \nonscaled{S}$ when $A$ and $B$ commute.
\end{corollary}
\begin{proof}
Let $A = QQ\Hermitian = P\inv \Sigma P$, $Q=P\inv \Sigma^{-1/2}$. Since
$A$ and $B$ commute, they are simultaneously diagonalisable, so we can choose
an eigendecomposition $B = P\inv \Lambda P$. Then,
\[
G_r = P\inv (\Lambda \Sigma\inv )_r P,
\]
and
\[
Q\inv B_r Q\invHermitian = P\inv \Lambda_r \Sigma\inv P.
\]
The result follows since $(\Lambda \Sigma\inv)_r = \Lambda_r \Sigma\inv$.
\end{proof}
As example, corollary \ref{cor:scaled_equals_nonscaled} holds trivially when
$A=\alpha I$, $\alpha>0$.
\fi

\section{Preconditioners via Bregman Projections}\label{sec:precond_bregman}

In this section we analyse the problem of finding a preconditioner by
using the Bregman divergence. We will formulate this as a constrained 
optimisation problem and prove \eqref{eq:S_preconditioners:scaled} is a 
minimiser. We also present a result concerning the optimality of the
condition number of the resulting preconditioned matrix.
Section \ref{sec:splitting} discusses extentions of this framework,
and in Section \ref{sec:diag_pcs} we give an example of how certain
diagonal preconditioners can be derived from the proposed framework.
Finally, Section \ref{sec:iterative_solution_methods} summarises the
consequences of our theoretical results in the context of PCG.\\

It is well-known that a truncated singular value decomposition
is an optimal low-rank approximation in both the spectral and Frobenius norms thanks to the 
Eckart–Young–Mirsky theorem. These norms are not invariant under
congruence transformations, i.e., $Z\mapsto P\Hermitian Z P$ for some
invertible $P$, and we will discover in Section \ref{sec:matrix_nearness_problems} that the preconditioners
$\nonscaled{S}$ and $\scaled{S}$ are optimal solutions to different matrix
norm minimisation problems. In this section, we introduce a different nearness
measure, the Bregman divergence, which naturally leads to the preconditioner 
$\scaled{S}$.\\

The Bregman matrix divergence $\Divergence_\phi:\domain \phi\times\relint \domain\phi\rightarrow [0, \infty)$
associated with a proper, continuously-differentiable, strictly convex 
\emph{seed} function $\phi$ is defined as follows:
\begin{align*}
    \Divergence_\phi(X,Y) & = \phi(X) - \phi(Y) - \trace{\nabla\phi(Y)\Hermitian (X-Y)}.
\end{align*}
This divergence originated in convex analysis for finding intersections of convex
sets \cite{bregman1967relaxation}, and divergences are now central objects in the
field of information geometry, see e.g. \cite{amari2016information,amari2010information}.
The choices $\phi(X) = \trace{X\log X - X}$, $\phi(X) = \|X\|_F^2$ and $\phi(X) = 
-\log\det(X)$ lead to the \emph{von Neumann}, \emph{squared Frobenius} and \emph{log
determinant} (or \emph{Burg}) divergences, respectively, where $\log$ in this
context is the matrix logarithm:
\begin{subequations}
\[
 \begin{alignedat}{2}
& \vonNeumann(X, Y) = \trace{X\log X - X\log Y -X + Y}, \qquad && \domain\phi = \Snpp,\\
& \Frobenius(X, Y) = \| X - Y\|_F^2, && \domain\phi = \Sn,\\
& \BregmanLogDet(X, Y) = \trace{XY\inv} - \logdet{XY\inv} - n, && \domain\phi = \Snpp.
 \end{alignedat}
\]
\end{subequations}
By a limit argument \cite[Section 4]{kulis2009low}, the log determinant matrix divergence
is only finite if $\range X = \range Y$, or $\range X \subseteq \range Y$ for the von
Neumann divergence. We highlight a few useful properties of the divergences, collectively
denoted by $\Divergence_\phi$ \cite{dhillon2008matrix}:
\begin{itemize}
    \item $\Divergence_\phi(X, Y) = 0 \Leftrightarrow X=Y$,
    \item \emph{Nonnegativity}: $\Divergence_\phi(X, Y) \geq 0$,
    \item \emph{Convexity}: $X\rightarrow \Divergence_\phi(X, Y)$ is convex.
\end{itemize}
We also state the following facts about these divergences:
\begin{corollary}[{\cite[Corollary 2]{kulis2009low}}]\label{cor:divergence_scalar_eigs}
Given eigendecompositions 
\begin{align*}
& X=U\diag(\lambda) U\Hermitian,\\
& Y=V\diag(\theta) V\Hermitian,
\end{align*}
with unitary $U$ and $V$ whose columns consist of eigenvectors $u_i$ and $v_i$, respectively,
the squared Frobenius, von Neumann and log determinant matrix divergences satisfy:
\begin{subequations}\label{eq:divergence}
\begin{align}
& \BregmanLogDet(X, Y) = \sum_{i=1}^n \sum_{j=1}^n (v_i\Hermitian u_j)^2 \left[\frac{\lambda_i}{\theta_j} - \log\frac{\lambda_i}{\theta_j} - 1\right],\\
& \vonNeumann(X, Y) = \sum_{i=1}^n \sum_{j=1}^n (v_i\Hermitian u_j)^2 [\lambda_i \log\lambda_i - \lambda_i\log\theta_j - \lambda_i + \theta_j],\\
& \Frobenius(X, Y) = \sum_{i=1}^n \sum_{j=1}^n (v_i\Hermitian u_j)^2 [\lambda_i - \theta_j]^2.
\end{align}
\end{subequations}
\end{corollary}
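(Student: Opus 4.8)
The plan is to reduce all three identities to a single bookkeeping step built around the unitary matrix $M := V\Hermitian U$, whose entries $M_{ij} = v_i\Hermitian u_j$ supply exactly the scalar overlaps $(v_i\Hermitian u_j)^2$ (resp.\ $|v_i\Hermitian u_j|^2$ in the complex case) appearing on the right-hand sides. First I would record the three tools used throughout: (a) the functional calculus, $f(X) = U\diag(f(\lambda))U\Hermitian$ and $g(Y) = V\diag(g(\theta))V\Hermitian$, applied to the scalar functions $t\mapsto t\log t$, $t\mapsto\log t$, $t\mapsto 1/t$ and $t\mapsto t^2$; (b) cyclic invariance of the trace; and (c) the ``partition of unity'' $\sum_j|M_{ij}|^2 = \sum_i|M_{ij}|^2 = 1$, coming from $M\Hermitian M = MM\Hermitian = I$. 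Tool (c) is the key point: it lets a single sum such as $\trace{X} = \sum_i\lambda_i$ or $\logdet{X} = \sum_i\log\lambda_i$ be rewritten as a double sum $\sum_{i,j}|M_{ji}|^2\lambda_i$ (resp.\ $\sum_{i,j}|M_{ji}|^2\log\lambda_i$) weighted by precisely the overlaps that occur in the claimed formulas.

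Next I would evaluate every term of the three divergences in these coordinates. Writing $X = U\diag(\lambda)U\Hermitian$ and $g(Y) = V\diag(g(\theta))V\Hermitian$ and using cyclicity of the trace gives the one ``mixed-trace'' identity
\[
\trace{X\,g(Y)} = \trace{\diag(\lambda)\,M\Hermitian\diag(g(\theta))\,M} = \sum_{i,j}|M_{ji}|^2\,\lambda_i\,g(\theta_j),
\]
and the specializations $g(t)=\log t$, $g(t)=1/t$, $g(t)=t$ yield $\trace{X\log Y}$, $\trace{XY\inv}$ and $\trace{XY}$. The remaining ``pure'' terms are read off from the functional calculus and padded into double sums via tool (c): $\trace{X\log X} = \sum_{i,j}|M_{ji}|^2\lambda_i\log\lambda_i$, $\trace{X^2} = \sum_{i,j}|M_{ji}|^2\lambda_i^2$, $\trace{X} = \sum_{i,j}|M_{ji}|^2\lambda_i$, $\trace{Y} = \sum_{i,j}|M_{ji}|^2\theta_j$, $\trace{Y^2} = \sum_{i,j}|M_{ji}|^2\theta_j^2$, $\logdet{XY\inv} = \sum_{i,j}|M_{ji}|^2(\log\lambda_i-\log\theta_j)$ (using multiplicativity of $\det$), and $n = \trace{M\Hermitian M} = \sum_{i,j}|M_{ji}|^2$. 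Substituting these into $\BregmanLogDet(X,Y) = \trace{XY\inv} - \logdet{XY\inv} - n$, into $\vonNeumann(X,Y) = \trace{X\log X} - \trace{X\log Y} - \trace{X} + \trace{Y}$, and into $\Frobenius(X,Y) = \trace{X^2} - 2\trace{XY} + \trace{Y^2}$, and collecting everything under the common double sum, produces the three bracketed scalar expressions; identifying $|M_{ji}|^2$ with $(v_i\Hermitian u_j)^2$ (the precise index labelling being a matter of convention) recovers the statement.

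I do not expect a genuine obstacle here: the whole content is that conjugation by the unitary $M$ turns every trace into a sum weighted by the squared eigenvector overlaps, with the bracketed scalars being the one-dimensional versions of each divergence at the pair $(\lambda_i,\theta_j)$. The two points that need a little care are (i) applying the functional calculus to $\log X$, $\log Y$ and $Y\inv$ with their \emph{own} eigenbases rather than a shared one --- this is exactly why the off-diagonal overlaps $(v_i\Hermitian u_j)^2$ survive instead of collapsing to $\delta_{ij}$; and (ii) well-definedness of $Y\inv$ and of the matrix logarithms on the relevant subspace, which on $\domain\phi\times\relint\domain\phi$ is ensured by the range conditions $\range X = \range Y$ (log-det) and $\range X\subseteq\range Y$ (von Neumann) recalled above.
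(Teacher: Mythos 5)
Your computation is correct. Note, however, that the paper does not prove this corollary at all --- it is quoted verbatim from \cite[Corollary 2]{kulis2009low} --- so there is no in-paper argument to compare against; your derivation (cyclic trace plus the unitarity of $M=V\Hermitian U$ to pad single sums into double sums weighted by $|v_i\Hermitian u_j|^2$) is exactly the standard one underlying the cited result. Your two caveats are well placed: in the complex Hermitian setting the weights must indeed be read as $|v_i\Hermitian u_j|^2$ rather than $(v_i\Hermitian u_j)^2$, and the pairing of $\lambda_i$ with $v_i$ in the displayed formulas is an index convention inherited from the reference (where the roles of $U$ and $V$ are swapped relative to this paper's eigendecompositions), which your bookkeeping handles correctly.
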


\begin{proposition}[{\cite[Proposition 12]{kulis2009low}}]\label{prop:bregman_invariance}
Let $P$ be invertible so $Z\mapsto f(Z)=P\Hermitian Z P$ is
a congruence transformation. Then,
\[
\BregmanLogDet(X, Y) = \BregmanLogDet(f(X), f(Y)).
\]
\end{proposition}

From Proposition \ref{prop:bregman_invariance}, we have
\begin{align}
\BregmanLogDet(\scaled{S}, S)
& = \BregmanLogDet(Q(I + G_r)Q\Hermitian, Q(I + Q\inv B Q\invHermitian)Q\Hermitian)\nonumber\\
& = \BregmanLogDet(I + G_r, I + Q\inv B Q\invHermitian).\label{eq:scaled_invariance_property}
\end{align}
In light of this, we ask if $G_r$ is in fact a solution
to the problem
\begin{subequations}\label{eq:whSS}
\begin{align}
\minimise_{X\in\Snp}\quad & \BregmanLogDet(\mathcal{P}, S)\\
\subto\quad
& \mathcal{P} = Q(I + X)Q\Hermitian\label{eq:P_choice}\\
& \rank(X) \leq r\label{eq:rank_of_X},
\end{align}
\end{subequations}
or, equivalently,
\begin{align*}
\minimise_{X\in\Snp}\quad & \BregmanLogDet(I + X, I + Q\inv B Q\invHermitian)\\
\subto\quad & \rank(X) \leq r.
\end{align*}
We answer this question in the affirmative with the following Theorem.
\begin{theorem}\label{thm:whSS_minimiser}
$\scaled{S}$ defined in \eqref{eq:S_preconditioners:scaled} is a minimiser of \eqref{eq:whSS}. Furthermore, the first $\rank(B) - r$ eigenvalues
of $\scaled{S} \inv S$ are given by $1 + \lambda_{r+i}(G)$, $i=1,\ldots, \rank(B) - r$, and the remaining
ones are given by $1$ with multiplicity $n+r-\rank(B)$.
\end{theorem}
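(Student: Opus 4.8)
The plan is to exploit the congruence invariance (Proposition~\ref{prop:bregman_invariance}), which already reduced the problem in \eqref{eq:scaled_invariance_property} to minimising $\BregmanLogDet(I+X, I+M)$ over $X\in\Snp$ with $\rank(X)\le r$, where I write $M := Q\inv B Q\invHermitian = G$ for brevity. Since $M=G$ is a fixed Hermitian positive semidefinite matrix, I would first diagonalise it as $M = V\diag(\theta)V\Hermitian$ with $\theta_1\ge\cdots\ge\theta_n\ge 0$, and note that $I+M = V\diag(1+\theta)V\Hermitian$. The key structural observation is that it is never advantageous to let the eigenvectors of $I+X$ differ from those of $I+M$: using Corollary~\ref{cor:divergence_scalar_eigs}, the divergence $\BregmanLogDet(I+X, I+M)$ expands as $\sum_{i,j}(v_i\Hermitian u_j)^2\,[\,\tfrac{1+\lambda_i}{1+\theta_j} - \log\tfrac{1+\lambda_i}{1+\theta_j} - 1\,]$ where the $u_j$ and $1+\lambda_j$ are the eigenvectors/eigenvalues of $I+X$. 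Since $t\mapsto t-\log t-1$ is nonnegative and each summand is weighted by the doubly-stochastic matrix with entries $(v_i\Hermitian u_j)^2$, a standard majorisation/rearrangement argument shows the sum is minimised when $U=V$ (so that the weight matrix is the identity), reducing the problem to choosing the eigenvalues $\lambda_i\ge 0$ of $X$ to minimise $\sum_{i=1}^n f\!\left(\tfrac{1+\lambda_i}{1+\theta_i}\right)$ with $f(t)=t-\log t -1$, subject to at most $r$ of the $\lambda_i$ being nonzero.

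Once the problem is scalarised, each term $f\!\left(\tfrac{1+\lambda_i}{1+\theta_i}\right)$ is individually minimised (to value $0$) by the choice $\lambda_i=\theta_i$, and if we are forced to set $\lambda_i=0$ the term contributes $f(\tfrac{1}{1+\theta_i}) = \theta_i - \log(1+\theta_i)$, which is increasing in $\theta_i$. Hence the optimal strategy is to ``spend'' the $r$ permitted nonzero eigenvalues on the $r$ largest $\theta_i$, i.e. set $\lambda_i=\theta_i$ for $i=1,\dots,r$ and $\lambda_i=0$ otherwise; this is exactly $X = G_r$, the rank-$r$ truncated SVD of $G$, giving $\mathcal P = Q(I+G_r)Q\Hermitian = \scaled{S}$. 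I would need a brief argument that no nonzero value of $\lambda_i$ other than $\theta_i$ can help when $\lambda_i$ is allowed to be nonzero (immediate from $f\ge 0$ with equality only at $1$), and that the ``greedy'' choice of which indices to zero out is optimal (a standard exchange argument using monotonicity of $\theta\mapsto\theta-\log(1+\theta)$).

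For the eigenvalue claim: with $X=G_r$ we have $\scaled{S}\inv S = Q\invHermitian(I+G_r)\inv(I+G)Q\Hermitian$, which is similar to $(I+G_r)\inv(I+G)$. Diagonalising $G$ and $G_r$ simultaneously in the basis $V$, this matrix has eigenvalues $\tfrac{1+\theta_i}{1+\theta_i}=1$ for $i=1,\dots,r$, $\tfrac{1+\theta_i}{1+0}=1+\theta_i=1+\lambda_i(G)$ for $i=r+1,\dots,\rank(B)$, and $\tfrac{1+0}{1+0}=1$ for $i>\rank(B)$. Collecting terms, the nontrivial eigenvalues are $1+\lambda_{r+i}(G)$ for $i=1,\dots,\rank(B)-r$, and $1$ appears with multiplicity $r + (n-\rank(B)) = n+r-\rank(B)$, as claimed.

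The main obstacle is the first step --- rigorously justifying that the optimal $X$ shares an eigenbasis with $G$. This is where Corollary~\ref{cor:divergence_scalar_eigs} plus a majorisation inequality (of the type: for a convex function $f$ and a doubly stochastic matrix $\Sigma$, $\sum_{i} f((\Sigma z)_i)\le \sum_i (\Sigma f(z))_i$, or the appropriate Birkhoff--von Neumann/rearrangement version adapted to the two-index sum) does the real work; care is needed because the $\lambda_i$ are themselves optimisation variables, so one should argue that for \emph{any} fixed multiset of eigenvalues $\{\lambda_i\}$ the optimal alignment is $U=V$, and only then optimise over the eigenvalues. The rank constraint is easy to handle since it simply restricts the support of $(\lambda_i)$. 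I would also want to confirm the minimiser is unique up to the usual eigenvector ambiguities when the relevant $\theta_i$ are distinct, though the theorem as stated only asserts ``a minimiser,'' so strict uniqueness is not required.
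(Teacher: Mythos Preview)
Your proposal is correct and follows essentially the same two-step strategy as the paper: show that the optimal $I+X$ is simultaneously diagonalisable with $I+G$, then solve the resulting scalar problem. The only real difference is in how the alignment step is handled. You expand the divergence via Corollary~\ref{cor:divergence_scalar_eigs} and propose a majorisation argument on the doubly stochastic weights $(v_i\Hermitian u_j)^2$, flagging this as ``the main obstacle.'' The paper's argument is more direct and in fact dissolves your concern: split $\BregmanLogDet(I+X,I+G)$ into its trace and log-determinant parts; the term $-\log\det\bigl((I+X)(I+G)\inv\bigr)$ depends only on the eigenvalues of $X$ and $G$ and is therefore independent of the eigenbasis of $X$, while $\trace{(I+X)(I+G)\inv}$, for fixed eigenvalues of $X$, is minimised precisely when the eigenvectors align, by the standard trace inequality $\trace{AB}\ge\sum_i\lambda_i(A)\lambda_{n-i+1}(B)$ (the paper cites \cite{bushell1990trace}). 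No majorisation is needed --- rearrangement for the trace term alone suffices. Your scalar optimisation (spend the $r$ nonzero $\lambda_i$ on the $r$ largest $\theta_i$, using that $\theta\mapsto \tfrac{1}{1+\theta}+\log(1+\theta)-1$ is increasing on $[0,\infty)$) and your eigenvalue count for $\scaled{S}\inv S$ are both correct and agree with the paper.
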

\begin{proof}
We derive a lower bound for $\mathcal{P}\mapsto \BregmanLogDet(\mathcal{P},S)$.
Let $\mathcal{P} = V\Sigma V^*$, $\Sigma =\diag(\sigma)$ and 
$S = U \Lambda U^*$, $\Lambda = \diag(\lambda)$.
The first term of the divergence is $\trace{\mathcal{P} S\inv}$,
which can be written as
\begin{align*}
\trace{\mathcal{P} S\inv} & = \trace{V\Sigma V^* U\Lambda\inv U^*} = \trace{\Sigma P \Lambda\inv P^*},
\end{align*}
where $P = V^* U$ is unitary. Since $P\mapsto \trace{\Sigma P 
\Lambda\inv P^*}$ is
a continuous map over a compact set (the orthogonal group), it will attain its extrema in this set by
Weierstrass' theorem. As a result, we obtain the following lower bound \cite{bushell1990trace}:
\begin{align*}
\trace{\mathcal{P} S\inv} \geq \sum_{i=1}^n \sigma_i \lambda_i\inv.
\end{align*}
Now note that $I + G_r$ and $I + Q\inv B Q\invHermitian$ are simultaneously
diagonalisable so they share an eigenbasis with eigenvectors $u_i$, $i=1,\ldots n$.
Let $1 + \nu_i$ be the eigenvalues of $I + G_r$
and $1 + \mu_i$ the eigenvalues of $I + Q\inv B Q\invHermitian$.
We have, by construction, $\nu_i = \mu_i$ for $1 \leq i \leq r$, $\nu_i = 0$
for $r+1 \leq i$. Then we have the lower bound on the log determinant term as
a function of $\Sigma$, which is realised by the choice $X=G_r$ in \eqref{eq:P_choice}:
\begin{align*}
- \logdet{\mathcal{P} S\inv}
 = - \sum_{i=1}^n \log\left(\frac{1+\sigma_i}{1+\mu_i}\right)
 \geq - \sum_{i=r+1}^n \log\left(\frac{1}{1+\mu_i}\right).
\end{align*}
This implies that for any $\mathcal{P}$, we have
\begin{equation}\label{eq:Bregman:lower_bound}
\BregmanLogDet(\mathcal{P} ,S) \geq \sum_{i=1}^n \sigma_i \lambda_i\inv - \sum_{i=r+1}^n \log\left(\frac{1}{1+\mu_i}\right) - n.
\end{equation}
Using Corollary \ref{cor:divergence_scalar_eigs} and orthogonality of the
eigenvectors we obtain
\begin{align}
\BregmanLogDet(\scaled{S}, S) & = \BregmanLogDet(I + G_r, I + Q\inv B Q\invHermitian)\nonumber \\
& = \sum_{i=1}^n \sum_{j=1}^n (u_i\Hermitian u_j)^2 \left(\frac{1 + \nu_i}{1 + \mu_j} - \log\left(\frac{1 + \nu_i}{1 + \mu_j}\right) - 1\right)\nonumber \\
& = \sum_{j=r+1}^{\rank(B)} \left(\frac{1}{1 + \mu_j} - \log\left(\frac{1}{1 + \mu_j}\right) - 1\right).\label{eq:Bregman:whS}
\end{align}
This coincides with the lower bound \eqref{eq:Bregman:lower_bound}, so the first
result follows. The statement concerning the generalised eigenvalues follows since
$G_r$ and $G$ are simultaneously diagonalisable by construction.
\end{proof}

We can also ask if $G_r$ satisfies a similar result. Since the domain of the divergence
is $\Snpp\times\Snpp$ we cannot say that $G_r$ is the matrix such that
$\BregmanLogDet(G_r, G)$ is minimised. We can, however, state a
different result.
\begin{proposition}
Let $VM V\Hermitian$ be an eigendecomposition
of $G=Q\inv B Q\invHermitian$ and $V_r$ the first $r<n$ columns of $V$.
The rank $r$ approximation of $G$ obtained by
a truncated singular value decomposition, $G_r$, is a minimiser 
of
\[
\Snp \ni X \mapsto \Divergence(V_r\Hermitian X V_r, V_r\Hermitian G V_r),
\]
where $X$ is such that $V_r\Hermitian X V_r \in \Srpp$ and $\rank(X)\leq r$ and 
$\Divergence$ can be either the von Neumann or log determinant divergence.
%Furthermore,
%\[
%\vonNeumann(V_r\Hermitian G_r  V_r, V_r\Hermitian G V_r) = \sum_{j=r+1}^{\rank(B)} \mu_j.
%\]
\end{proposition}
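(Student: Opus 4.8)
The plan is to reduce the problem to a scalar optimisation over eigenvalues, exactly mirroring the structure of the proof of Theorem~\ref{thm:whSS_minimiser}, but now working inside the compressed $r\times r$ space. First I would observe that the range restriction $Z\mapsto V_r\Hermitian Z V_r$ is itself a (non-invertible) linear map, but since $V_r$ has orthonormal columns the compressed matrix $V_r\Hermitian G V_r$ is simply $\diag(\lambda_1(G),\ldots,\lambda_r(G))$, i.e.\ the top-$r$ block of $M$ in the eigenbasis; this is positive definite in $\Srpp$ provided $r\le\rank(B)$, so the divergence $\Divergence(V_r\Hermitian X V_r, V_r\Hermitian G V_r)$ is well defined. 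Likewise $V_r\Hermitian G_r V_r = \diag(\lambda_1(G),\ldots,\lambda_r(G))$, so the candidate minimiser makes the two arguments of $\Divergence$ coincide, hence $\Divergence(V_r\Hermitian G_r V_r, V_r\Hermitian G V_r)=0$. Since the log determinant and von Neumann divergences are nonnegative and vanish only when their arguments are equal (the first two bulleted properties), any $X$ achieving the value $0$ is a minimiser, and $G_r$ achieves it.

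The only thing left is to check that $G_r$ is feasible: it must satisfy $\rank(X)\le r$, which holds by construction since $G_r$ is a rank-$r$ truncated SVD, and it must satisfy $V_r\Hermitian X V_r\in\Srpp$, which we verified equals $\diag(\lambda_1(G),\ldots,\lambda_r(G))\succ0$ under the standing assumption $r<\rank(B)$ (so the first $r$ eigenvalues of $G$ are strictly positive). I would state this assumption explicitly. With feasibility and the value-$0$ computation in hand, the proposition follows immediately for both choices of $\Divergence$.

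The main subtlety — really the only one worth dwelling on — is the interplay between the rank constraint on $X$ (imposed on the full $n\times n$ matrix) and the positive-definiteness constraint (imposed only on the compressed $r\times r$ matrix $V_r\Hermitian X V_r$). One should note that the feasible set is nonempty and that the infimum $0$ is attained in it, which the single witness $X=G_r$ settles; there is no need to characterise all minimisers. I would also remark that the argument does not use strict convexity beyond what guarantees the "$\Divergence=0\iff$ equal arguments" property already quoted, and that the same reasoning would fail for, say, the Frobenius divergence restricted in this way only in that the restriction would then be vacuous rather than wrong — so the statement is genuinely about the divergences whose effective domain forces a positive-definite (equivalently, range) condition. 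Finally, a one-line remark connecting this back to $\scaled{S}$: combined with Proposition~\ref{prop:bregman_invariance} and \eqref{eq:scaled_invariance_property}, it shows the truncation-then-scale and scale-then-truncate operations agree on the compressed space even though they differ on the full space, which is the precise sense in which $\scaled{S}$ "sees" the same low-rank information as a naive truncation.
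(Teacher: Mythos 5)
Your proposal is correct and follows essentially the same route as the paper's proof: both compute that $V_r\Hermitian G_r V_r = V_r\Hermitian G V_r = M_r$, conclude the divergence is $0$ at $X=G_r$, and invoke nonnegativity of the divergence to finish. Your additional feasibility check (that $r<\rank(B)$ ensures $M_r\in\Srpp$) is a reasonable point of care that the paper leaves implicit.
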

\begin{proof}
This follows trivially from direct computation.
Indeed, $V_r\Hermitian G V_r = M_r$, where $M_r$
is the $\Rrr$ submatrix of $M$ containing the first $r$ eigenvalues of 
$G$. So, by construction,
\begin{align*}
\Divergence(V_r\Hermitian G_r V_r, V_r\Hermitian G V_r)
& = \Divergence(V_r\Hermitian V_r M_r V_r\Hermitian V_r, V_r\Hermitian VM V\Hermitian V_r)\\
& = \Divergence(M_r, M_r)\\
& = 0.
\end{align*}
Since $0$ is a lower bound of the convex function 
$X\mapsto \Divergence(V_r\Hermitian X V_r, V_r\Hermitian G V_r)$,
we are done.
\end{proof}
\cite{kulis2009low} extends the divergences to low-rank matrices, which will
be explored in more detail in the context of Nystr\"om approximations in Section
\ref{sec:matrix_nearness_problems}.

\begin{remark}\label{remark:divergence_values}
The quantities $\BregmanLogDet(\scaled{S}, S)$ and $\BregmanLogDet(\nonscaled{S}, S)$ are
described in terms of the eigenvalues of $I + G$,  $I + G_r$ and $I + Q\inv B_r
Q\invHermitian$. By construction,
\[
\lambda_i(I + G_r) = \lambda_i(I + G)
\quad \textnormal{when}\quad 1 \leq i \leq r,
\]
so the divergence $\BregmanLogDet(\scaled{S}, S)$ in
\eqref{eq:Bregman:whS} measures a quantity in terms of the
$i$th eigenvalues, where $r+1\leq i\leq n$.\\
An attempt to carry out the same analysis for $\BregmanLogDet(\nonscaled{S}, S)$
makes it clear why we expect $\scaled{S}$ to be a better preconditioner.
The derivation of the bound \eqref{eq:Bregman:whS} relies on the
fact that $I + G_r$ and $I + G$ are simultaneously
diagonalisable. In general, however, $Q\inv B_r 
Q\invHermitian \neq G_r$ (see Corollary \ref{cor:scaled_equals_nonscaled}).
%In general, however, we cannot expect $Q\inv B_r Q\invHermitian$
%and $Q\inv B Q\invHermitian$ to be simultaneously diagonalisable.
%Indeed, for
%the eigenvectors of $Q\inv B_r Q\invHermitian$ to be the same
%as those of $Q\inv B Q\invHermitian$, then $Q$ and $B$ must be
%simultaneously diagonalisable (or equivalently, they must commute,
%see Corollary \ref{cor:scaled_equals_nonscaled} below). 
Letting $(u_i, 1+\mu_i)$ and $(\tilde u_i, 1+ \tilde\nu_i)$
denote the eigenpairs 
of $I+G$ and $I+Q\inv B_r Q\invHermitian$, respectively, we have:
\begin{equation}\label{eq:Bregman_tildeS}
\BregmanLogDet(\nonscaled{S}, S) = \sum_{i=1}^n \sum_{j=1}^n (\tilde u_i\Hermitian u_j)^2\left(\frac{1+\tilde\nu_i}{1+\mu_j} - \log\left(\frac{1+\tilde\nu_i}{1+\mu_j}\right) - 1\right).
\end{equation}
The importance of the choice of basis is also apparent from the
term $(\tilde u_i\Hermitian u_j)^2$ in \eqref{eq:Bregman_tildeS}, since
this measures how aligned the bases are between $S$ and its preconditioner.
To summarise, a reason for $\scaled{S}$ appearing to be a better
preconditioner is that it matches the eigenvalues of $S$
in the first parts of the spectrum i.e. 
$[1, r]$, which directly implies unit eigenvalues
of the multiplicity mentioned in Theorem \ref{thm:whSS_minimiser}.
\end{remark}

\begin{corollary}\label{corr:BregGeq}
$\BregmanLogDet(\nonscaled{S}, S) \geq \BregmanLogDet(\scaled{S}, S)$.
\end{corollary}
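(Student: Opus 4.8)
The plan is to establish the inequality $\BregmanLogDet(\nonscaled{S}, S) \geq \BregmanLogDet(\scaled{S}, S)$ by showing that $\nonscaled{S}$ is a feasible point for the optimisation problem \eqref{eq:whSS}, while $\scaled{S}$ is a minimiser of it by Theorem \ref{thm:whSS_minimiser}. Concretely, I would write $\nonscaled{S} = A + B_r = Q(I + Q\inv B_r Q\invHermitian)Q\Hermitian$, so that $\nonscaled{S}$ has the required form \eqref{eq:P_choice} with $X = Q\inv B_r Q\invHermitian$. It remains to check the two constraints: $X = Q\inv B_r Q\invHermitian \in \Snp$, which holds because $B_r$ is positive semidefinite (a truncated SVD of the PSD matrix $B$) and congruence preserves positive semidefiniteness; and $\rank(X) = \rank(Q\inv B_r Q\invHermitian) = \rank(B_r) \leq r$, since $Q$ is invertible. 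Hence $\nonscaled{S}$ is feasible.

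Given feasibility, the inequality is immediate: by Theorem \ref{thm:whSS_minimiser}, $\scaled{S}$ attains the minimum of $\mathcal{P} \mapsto \BregmanLogDet(\mathcal{P}, S)$ over the feasible set of \eqref{eq:whSS}, and since $\nonscaled{S}$ lies in that feasible set, $\BregmanLogDet(\nonscaled{S}, S) \geq \BregmanLogDet(\scaled{S}, S)$. One should also note that both divergences are finite: $\range \nonscaled{S} = \range S = \Cn$ since $A$ is positive definite and adding PSD terms keeps the matrix positive definite, so the range condition needed for $\BregmanLogDet$ to be finite is satisfied, and likewise for $\scaled{S}$.

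I do not anticipate a serious obstacle here — the corollary is essentially a bookkeeping consequence of the variational characterisation already proven. The only point requiring a moment's care is confirming that $\nonscaled{S}$ genuinely satisfies the rank constraint \eqref{eq:rank_of_X} in the parametrisation used in \eqref{eq:whSS}, i.e. that the relevant $X$ is $Q\inv B_r Q\invHermitian$ rather than $B_r$ itself; since $Q$ is invertible these have the same rank, so there is no issue. An alternative, slightly more self-contained route would be to compare \eqref{eq:Bregman:whS} directly with \eqref{eq:Bregman_tildeS} from Remark \ref{remark:divergence_values}, invoking the lower bound \eqref{eq:Bregman:lower_bound}: every admissible $\mathcal{P}$, in particular $\nonscaled{S}$, satisfies $\BregmanLogDet(\mathcal{P}, S) \geq \BregmanLogDet(\scaled{S}, S)$ because the right-hand side coincides with the lower bound. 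I would present the feasibility argument as the main line since it is the cleanest.
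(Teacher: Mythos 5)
Your proof is correct and follows exactly the route the paper intends: the corollary is stated without proof as an immediate consequence of Theorem \ref{thm:whSS_minimiser}, the point being precisely that $\nonscaled{S} = Q(I + Q\inv B_r Q\invHermitian)Q\Hermitian$ is feasible for \eqref{eq:whSS} while $\scaled{S}$ is a minimiser. Your feasibility check (positive semidefiniteness and rank preservation under the congruence by the invertible $Q$) is the right bookkeeping and nothing further is needed.
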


While corollary \ref{corr:BregGeq} is trivial, its significance in
terms of interpreting preconditioning strategies for $S$ is not yet
clear. If $D(X,Z) < D(Y,Z)$, then we in general expect the matrix
$XZ\inv$ to be closer to the identity than $YZ\inv$, but the quality
of a preconditioner depends not only on the bounds for the eigenvalues
but also on the distribution of the spectrum. Based on Remark 
\ref{remark:divergence_values} and Theorem \ref{thm:whSS_minimiser},
the basis for the preconditioner also plays a part. This is discussed in
Section \ref{sec:iterative_solution_methods} in the context of iterative
methods. We conclude this section with a result regarding the condition
number of the preconditioned matrix:

\begin{theorem}\label{thm:kappa2_minimisation}
When $\rank(G)<n$, $G_r$ is a minimiser of the problem
\begin{subequations}\label{eq:kappa2_minimisation}
\begin{align}
\minimise_{X\in\Snp}\quad & \kappa_2(P\invhalf S P\invhalf)\\
\subto\quad
& P = Q(I+X)Q\Hermitian\label{eq:kappa2_minimisation:P}\\
& \rank(X) \leq r.
\end{align}
\end{subequations}
\end{theorem}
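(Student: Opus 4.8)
The plan is to reduce the condition number to the ratio of extreme generalised eigenvalues of the pencil $(I+G,\,I+X)$ and then exhibit matching bounds. Since $X\in\Snp$ and $Q$ is invertible, $P=Q(I+X)Q\Hermitian$ is positive definite, and $P\invhalf S P\invhalf$ is Hermitian positive definite and similar to $P\inv S = Q\invHermitian(I+X)\inv(I+G)Q\Hermitian$, hence to $C:=(I+X)\invhalf(I+G)(I+X)\invhalf$. Therefore $\kappa_2(P\invhalf S P\invhalf)=\lambda_1(C)/\lambda_n(C)$, a quantity depending on $X$ only through $I+X$, and its extreme eigenvalues are the extremal values of the generalised Rayleigh quotient $v\mapsto \frac{v\Hermitian(I+G)v}{v\Hermitian(I+X)v}$.

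At the candidate point $X=G_r$ (which is feasible, being positive semidefinite of rank at most $r$): because $G_r$ and $G$ share an eigenbasis, $C$ is diagonal in that basis with eigenvalues $\frac{1+\lambda_i(G)}{1+\lambda_i(G_r)}$, equal to $1$ whenever $i\le r$ or $\lambda_i(G)=0$, and equal to $1+\lambda_i(G)$ for $r<i\le\rank(G)$. Since $\rank(G)<n$ the eigenvalue $1$ is attained, and since $r<\rank(B)=\rank(G)$ the largest eigenvalue is $1+\lambda_{r+1}(G)$; thus $\kappa_2=1+\lambda_{r+1}(G)$ at $X=G_r$ (this is also immediate from Theorem \ref{thm:whSS_minimiser}).

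For the lower bound, fix any feasible $X$. The hypothesis $\rank(G)<n$ gives $\ker G\neq\{0\}$, and for $0\neq v\in\ker G$ we have $\frac{v\Hermitian(I+G)v}{v\Hermitian(I+X)v}=\frac{v\Hermitian v}{v\Hermitian(I+X)v}\le 1$ because $X\succeq 0$; hence $\lambda_n(C)\le 1$. On the other hand $\ker X$ has dimension at least $n-r$ and $I+X$ restricts to the identity there, so
\[
\lambda_1(C)\ \ge\ \max_{0\neq v\in\ker X}\Bigl(1+\tfrac{v\Hermitian G v}{v\Hermitian v}\Bigr)\ =\ 1+\max_{0\neq v\in\ker X}\tfrac{v\Hermitian G v}{v\Hermitian v}\ \ge\ 1+\lambda_{r+1}(G),
\]
the last step being the Courant--Fischer min--max characterisation of $\lambda_{r+1}(G)$ applied to an $(n-r)$-dimensional subspace. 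Combining, $\kappa_2(P\invhalf S P\invhalf)=\lambda_1(C)/\lambda_n(C)\ge 1+\lambda_{r+1}(G)$ for every feasible $X$, and this value is attained at $X=G_r$; hence $G_r$ is a minimiser.

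The argument is short, and the only real subtlety is keeping track of where $\rank(G)<n$ enters: it is exactly what makes $\ker G$ nontrivial (so $\lambda_n(C)\le 1$) and simultaneously what forces $1$ to be the smallest eigenvalue of $\scaled{S}\inv S$. If $G$ were positive definite the bound $\lambda_n(C)\le 1$ would fail and $G_r$ need not be optimal (for instance $X=0$ already gives a smaller condition number when $G\succ 0$). One should also be careful to use only the elementary similarity/congruence invariance of $\kappa_2$ here, not the Bregman-specific Proposition \ref{prop:bregman_invariance}.
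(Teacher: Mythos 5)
Your proof is correct and follows essentially the same route as the paper: both arguments bound the smallest eigenvalue of the preconditioned matrix by $1$ using a nonzero vector in $\ker G$, bound the largest eigenvalue below by $1+\lambda_{r+1}(G)$, and then verify that $X=G_r$ attains the ratio $1+\lambda_{r+1}(G)$; your Courant--Fischer argument on $\ker X$ in fact supplies the justification for the lower bound on $\lambda_1$ that the paper only asserts. The one blemish is the closing parenthetical: when $G\succ 0$ it is not always true that $X=0$ gives a smaller condition number than $X=G_r$ (take $G$ with one large eigenvalue and the rest tiny), though this side remark lies outside the proof itself.
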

\begin{proof}
By definition, we have $\kappa_2(P\invhalf S P\invhalf) = \frac{\lambda_1(P\invhalf S P\invhalf)}{\lambda_n(P\invhalf S P\invhalf)}$.
We find a lower bound for $\lambda_1({P\invhalf S P\invhalf})$ independent of the choice of $X$. First, note that
\[
\lambda_1({P\invhalf S P\invhalf}) = \lambda_1(P\inv S) = \lambda_1({(I + Q\inv X Q\invHermitian)\inv (I + G))}.
\]
Then,
\[
\lambda_1({(I + Q\inv X Q\invHermitian)\inv (I + G))}
= \max_{0 \neq x\in\Cn} \frac{x\Hermitian(I+G)x}{x\Hermitian(I+ Q\inv X Q\invHermitian)x}
\geq 1 + \lambda_{r+1}(G).
\]
We now provide an upper bound on $\lambda_n(P\invhalf S P\invhalf)$. By definition,
\[
\lambda_n(P\invhalf S P\invhalf) = \lambda_n((I + Q\inv X Q\invHermitian)\inv (I + G)).
\]
We assume $\ker(G)\neq\emptyset$, and since $Q\inv X Q\invHermitian$ is positive 
semidefinite for any $X\in\Snp$ with rank at most $r$, we have
\[
\lambda_n(P\invhalf S P\invhalf) \leq 1.
\]
Consequently, we have
\[
\kappa_2(P\invhalf S P\invhalf) \geq 1 + \lambda_{r+1}(G).
\]
The choice $X = G_r$ in \eqref{eq:kappa2_minimisation:P} leads to
\[
P = Q(I + G_r)Q\Hermitian = \scaled{S}.
\]
We have $\lambda_1(\scaled{S}\inv S) = 1 + \lambda_{r+1}(G)$
thanks to Theorem \ref{thm:whSS_minimiser} so we are done.
\end{proof}

When $G$ has full rank, Theorem \ref{thm:whSS_minimiser} still holds, but
Theorem \ref{thm:kappa2_minimisation} does not apply. To see why, observe that
\[
\lambda_n(P\inv S) = \lambda_n((I + Q\inv X Q\invHermitian)\inv (I + G)) \leq \lambda_n(I + G),
\]
so the general bound for the preconditioned matrix is
\[
\kappa_2 (P\inv S) \leq \frac{1+\lambda_{r+1}(G)}{1+\lambda_n(G)}.
\]
We have $\lambda_n(\scaled{S}\inv S) = 1 \leq 1 + \lambda_n(G)$. Therefore,
$1+\lambda_{r+1}(G) = \kappa_2(\scaled{S}\inv S) \geq 
\frac{1+\lambda_{r+1}(G)}{1+\lambda_n(G)}$. However, we can still
construct a preconditioner that minimises the condition number of the 
preconditioned matrix
\begin{equation}\label{eq:fullrankG}
Q (I + G_r + \alpha (I-VV\Hermitian) ) Q\Hermitian,
\end{equation}
where the columns of $V\in\Cnr$ are the $r$ left dominant singular vectors of $G_r$
and selecting $\lambda_n(G) \leq \alpha \leq \lambda_{r+1}(G)$
since it lifts the eigenvalues of $\scaled{S}\inv S$ that are not unity to the
interval $[\lambda_n(G), \lambda_{r+1}(G)]$. A similar approach is used in
\cite{zhao2022nysadmm}. The application of \eqref{eq:fullrankG} can be applied 
at similar computational cost to \eqref{eq:S_preconditioners:scaled} but assumes knowledge
of the interval in which $\alpha$ must be chosen. In addition, the preconditioner
\eqref{eq:fullrankG} will only increase the $1$-multiplicity of the preconditioned
matrix by one. As we shall see in Section \ref{sec:iterative_solution_methods}, this
means that using \eqref{eq:fullrankG} as a preconditioner only reduces the number of 
PCG iteration required for convergence (in exact arithmetic) by one when compared to \eqref{eq:S_preconditioners:scaled}.\\

Finally, we comment on the construction and application of the preconditioner 
$\scaled{S}$, which requires two applications of $Q\inv$ and the
inversion of $(I + G_r)$ onto a vector.
We denote by $\MatVec_X$ the cost of computing matrix-vector products
with $X$. To derive and apply the proposed preconditioner we make the assumption that 
the factorisation of $A$ into $QQ\Hermitian$ is practical to compute,
and that $\MatVec_Q$ and $\MatVec_{Q\inv}$ are cheap. We often have additional 
structure so that $\MatVec_Q$ and $\MatVec_{Q\inv}$ are linear in $n$. For instance,
when $A$ is a banded matrix with bandwidth $\omega$, it can be factorised in 
$O(\omega^2 n)$ into triangular matrices with the same bandwidth, so $\MatVec_Q = 
\MatVec_{Q\inv} = O(\omega n)$. Now, assume $G_r = FF\Hermitian$ for some $F\in\Cnr$.
By the Woodbury identity,
\begin{equation}\label{eq:woodbury}
(I + FF\Hermitian)\inv =  I - F(I_r + F\Hermitian F)\inv F\Hermitian,
\end{equation}  
so given $(I_r + F\Hermitian F)\inv$, which can be constructed in $O(nr^2)$,
we can make products with $(I + FF\Hermitian)\inv$ in $O(nr)$ so the action
\[
x \mapsto \scaled{S}x = Q\invHermitian (I + FF\Hermitian)\inv Q\inv x
\]
can be applied in $O(\MatVec_Q + nr)$. Alternatively, $(I + FF\Hermitian)\inv$
can be applied given a thin QR decomposition of $F$ at a similar cost.
In general, $G=Q\inv B Q\invHermitian$ is a dense $n\times n$ matrix, so 
construction $G_r$ is impractical for large applications. In Section
\ref{sec:rsvd} we describe an alternative to the truncated SVD that constructs
an approximation of $(I + G_r)\inv$ %in $O((\MatVec_{Q\inv} + \MatVec_B) r + nr^2)$
in a way that avoids forming the matrix $G$.\\

In summary, we have presented the Bregman divergence framework for analysing
preconditioning of \eqref{eq:Sxb} and some of the important properties of
the Bregman divergence, e.g., Proposition \ref{prop:bregman_invariance}. 
Theorem \ref{eq:whSS}
showed that \eqref{eq:S_preconditioners:scaled} is a minimiser of a
constrained divergence and that in some cases it is a minimiser of the
condition number of the preconditioned matrix (Theorem \ref{thm:kappa2_minimisation}).
Section \ref{sec:splitting} explores extensions of this framework and Section 
\ref{sec:iterative_solution_methods} discusses the results above in the
context of classic results for iterative methods.

\subsection{Choice of Splitting}\label{sec:splitting}

So far we assumed \emph{a priori} that $S$ admits a splitting into a
positive definite term $A$ and a remainder term $B$, in which case 
Theorem \ref{thm:whSS_minimiser} tells us how to construct a preconditioner
specific of the form $A + X$, for some low-rank matrix $X$. We now
address the following  problem, where we denote by $\mathcal{C} \subset \Snpp$ 
some set of admissible factorisable matrices:
\begin{subequations}\label{eq:whSS_general}
\begin{align}
\minimise_{\substack{V\in\Snpp\\ W\in \Sn}} \quad & \BregmanLogDet(X, S)\label{eq:whSS_general:obj}\\
\subto\quad & X = V + W\label{eq:whSS_general:X}\\
&\rank(W) \leq r,\\
& V \in \mathcal{C}.\label{eq:whSS_general:Omega}
\end{align}
\end{subequations}

Without the constraint \eqref{eq:whSS_general:Omega}, an optimal solution
to \eqref{eq:whSS_general} is clearly $V=S$, $W=0$. The admissibility set
$\mathcal{C}$ can therefore be viewed as the possible choices of matrices $V$
whose factors $Q$ define the congruence transformation used in \eqref{eq:whSS}.
If it is known that $S=A+B$, $A\in\Snpp$ and $B\in\Snp$, then the solution
given in Section \ref{sec:precond_bregman} is recovered by setting the admissibility set to $\mathcal{C} =\{ A\}$.
There is no \emph{a priori} way of choosing an optimal split of an arbitrary
$S\in\Snpp$ into the sum of $A\in\Snpp$ and $B\in\Snp$ without deeper 
insight into its structure.
It is, however, useful to reason about \eqref{eq:whSS_general} to develop
heuristics for practical use. We provide an example of $\mathcal{C}$ in Section
\ref{sec:diag_pcs} that recovers the standard Jacobi preconditioner.\\

Some splits of $S$ may be more practical than others.
For instance, if $S$ is provided as a sum of a sparse indefinite term
$C_\mathrm{sparse}$ and a low-rank term $K$ (whose sum is positive definite),
then one option is to let $A= C_\mathrm{sparse} + \alpha I$, $B = K - 
\alpha I$ for some suitable constant $\alpha$. Further, if a factorisation 
of the positive definite term is not readily available, 
then an incomplete one can be used to a similar effect by compensating 
for the remainder in the low-rank approximation. 
It can even be done \emph{on-the-fly}: suppose $S = \tilde A + \tilde B$ 
where the matrix $\tilde A$ is a given candidate for factorisation (not necessarily known
to be positive definite). We attempt to perform a Cholesky factorisation,
and if a negative pivot is encountered, then it is replaced by some small positive
value. This amounts to producing a factorisation $LL\Hermitian = \tilde A + D$
for some suitable diagonal $D\in\Snp$. Then, taking $A = LL\Hermitian$ results in
the following choice of splitting:
\[
S = A + B = LL\Hermitian + (\tilde B - D).
\]
As another example, suppose we are given a matrix $S$  is "almost"
a positive definite circulant matrix, i.e., $A\in\Snpp$ is a circulant matrix
and $A \approx S$. Factorisations of circulant matrices are typically 
practical to compute, so we can construct a preconditioner from the splitting 
$S = A + B$, where $B = S - A$ can be thought of as an approximation error. 
As a result, it may be natural to fix $V=A$ in \eqref{eq:whSS_general:X} and
only minimise the objective over $W$. As a result, we let $W\in\Sn$ in
\eqref{eq:whSS_general:obj} since an error term $B =S-A$ is in general indefinite.
This approach extends
to any situation where the positive definite part of $S$ is a small perturbation
away from a structured matrix that is easier to factorise. We emphasise that this is 
highly dependent on the problem at hand and may perform arbitrarily poorly in
practice. An investigation of such approaches is deferred to future work.

\subsection{Diagonal Preconditioners}\label{sec:diag_pcs}

Another preconditioner can be obtained as a minimiser of a Bregman divergence 
by introducing different constraints to those in \eqref{eq:P_choice} and 
\eqref{eq:rank_of_X}. For instance, we can recover the standard block Jacobi 
preconditioner in the following way. Suppose we want to construct a 
preconditioner $\mathcal{P}\in\Snpp$
for the system \eqref{eq:Sxb} of the form
\begin{equation}\label{eq:blkdiag_form}
\mathcal{P} = \blkdiag(D_1,\ldots, D_b),
\end{equation}
with $b$ blocks of size $n_i$, $i=1,\ldots, b$, such that $n = \sum_{i=1}^b n_i$.
Let us denote by $E_i\in\mathbb{R}^{n \times n_i}$ the matrix constructed
from the identity matrix of order $n$ by deleting columns such that
\[
D_i = E_i\Hermitian \mathcal{P} E_i,\quad i=1,\ldots, b.
\]
Letting $\mathbf{P}$ denote matrices of the form \eqref{eq:blkdiag_form}, we formulate the following
optimisation problem:
\begin{align}\label{min:Bregman:block_diag}
\minimise_{\mathcal{P}\in\mathbf{P}} \quad &\BregmanLogDet (S, \mathcal{P}).
\end{align}
Recalling $X:Y :=\trace{X\Hermitian Y}$ and using $\BregmanLogDet (S, \mathcal{P}) = 
\BregmanLogDet (\mathcal{P}\inv, S\inv)$, the optimality conditions are:
\begin{align*}
S: E_i D_i\inv \diff D_i D_i\inv E_i\Hermitian  = \mathcal{P}: E_i D_i\inv \diff D_i D_i\inv E_i\Hermitian, \quad \forall \diff D_i\in \mathbb{H}^{n_i}_{++}, \quad i=1,\ldots,b.
\end{align*}
so $E_i\Hermitian S E_i  = D_i$. A minimiser of \eqref{min:Bregman:block_diag} is 
therefore the following block Jacobi preconditioner:
\[
\mathcal{P}^\star = \blkdiag(E_1\Hermitian S E_1,\ldots, E_b\Hermitian S E_b).
\]
Note that \eqref{min:Bregman:block_diag} applies to arbitrary $S\in\Snpp$ and
departs from the main preconditioners in \eqref{eq:S_preconditioners} 
(i.e. $A$ plus a positive semidefinite low-rank approximation). Investigating 
whether a more general class of preconditioners can be sought as minimisers to
Bregman divergences is subject to future work.

\subsection{Preconditioned Iterative Methods}\label{sec:iterative_solution_methods}

The following convergence results for PCG are well-known \cite{greenbaum1997iterative}:
\begin{theorem}
Let $S\in\Snpp$ and $x_0\in\mathbb{C}^n$ be an initial guess for the solution of the system $Sx=b$. 
Further, let $\mathcal{M}$ denote the inverse of a preconditioner. The
$k$\textsuperscript{th} iterate of PCG, denoted $x_k$, satisfies:
\begin{align*}
\| x - x_k \|_S  & = \inf_{\substack{p_k \in \mathcal{P}_k\\ p_k(0) = 1}} \| p_k(\mathcal{M}S)(x - x_0)\|_S,\\
\| x - x_k \|_S  & \leq \inf_{\substack{p_k \in \mathcal{P}_k\\ p_k(0) = 1}} \sup_{z \in \lambda(\mathcal{M}S)}|p_k(z)|\|x - x_0\|_S,\\
\| x - x_k \|_S  & \leq 2\Big(\frac{\sqrt{\kappa_2(\mathcal{M}S)} -1}{\sqrt{\kappa_2(\mathcal{M}S)} +1}\Big)^k\|x - x_0\|_S,
\end{align*}
where $\mathcal{P}_k$ is the space of polynomials of order at most $k$.
\end{theorem}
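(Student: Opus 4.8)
The plan is to reduce PCG to the ordinary conjugate gradient method on a symmetrically preconditioned system and then invoke the classical optimality and polynomial-approximation arguments. Write $\mathcal{M} = P\inv$, where $P \in \Snpp$ is the preconditioner, and set $\tilde S = P\invhalf S P\invhalf \in \Snpp$. A routine computation shows that PCG for $Sx=b$ with preconditioner $P$ coincides, under the change of variables $y = P^{1/2}x$, with CG for $\tilde S y = P\invhalf b$, and that the errors match as $\|x - x_k\|_S = \|P^{1/2}x - P^{1/2}x_k\|_{\tilde S}$. By the standard variational characterisation of CG, the $k$th iterate minimises the $\tilde S$-energy norm of the error over the affine Krylov space $y_0 + \mathcal{K}_k(\tilde S, \tilde r_0)$ with $\tilde r_0 = P\invhalf b - \tilde S y_0$; translating back, $x_k$ minimises $\|x - \cdot\|_S$ over $x_0 + \mathcal{K}_k(\mathcal{M}S, \mathcal{M}r_0)$, $r_0 = b - Sx_0$. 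Since $\mathcal{M}r_0 = \mathcal{M}S(x-x_0)$, every element of this Krylov space equals $q(\mathcal{M}S)\mathcal{M}S(x-x_0)$ for some polynomial $q$ of degree at most $k-1$, so the error can be written as $x - x_k = p_k(\mathcal{M}S)(x-x_0)$ with $p_k(t) = 1 - t\,q(t) \in \mathcal{P}_k$ and $p_k(0) = 1$; conversely every such $p_k$ arises this way, so minimality over these polynomials gives the first line.

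For the second line, observe that $\mathcal{M}S = P\inv S$ is similar to $\tilde S$, hence diagonalisable with spectrum $\lambda(\mathcal{M}S) = \lambda(\tilde S) \subset (0,\infty)$ and an eigenbasis $\{w_i\}$ that is orthonormal in the $S$-inner product (the images under $P\invhalf$ of an orthonormal eigenbasis of $\tilde S$). Expanding $x - x_0 = \sum_i c_i w_i$, for any polynomial $p_k$ one gets $\|p_k(\mathcal{M}S)(x-x_0)\|_S^2 = \sum_i |p_k(\lambda_i)|^2 |c_i|^2 \le \big(\max_i |p_k(\lambda_i)|\big)^2 \|x-x_0\|_S^2$; taking square roots and then the infimum over $p_k$ yields the bound involving $\sup_{z\in\lambda(\mathcal{M}S)}|p_k(z)|$.

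For the third line, specialise $p_k$ in the second line to the scaled-and-shifted Chebyshev polynomial of degree $k$ on the interval $[\lambda_n(\mathcal{M}S), \lambda_1(\mathcal{M}S)]$, normalised so that its value at $0$ equals $1$. Since $\lambda(\mathcal{M}S)$ lies in that interval, the classical sup-norm estimate for Chebyshev polynomials (obtained by mapping the interval to $[-1,1]$) bounds the resulting $\sup$ by $2\big((\sqrt{\kappa}-1)/(\sqrt{\kappa}+1)\big)^k$ with $\kappa = \lambda_1(\mathcal{M}S)/\lambda_n(\mathcal{M}S)$, which is precisely $\kappa_2(\mathcal{M}S) = \kappa_2(\tilde S)$; combining with the second line finishes the proof.

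The main obstacle is expository rather than technical: carefully establishing the equivalence of PCG with CG on $\tilde S y = P\invhalf b$ together with CG's energy-norm minimisation property, since once the error is expressed as $p_k(\mathcal{M}S)(x-x_0)$ with $p_k(0)=1$, everything downstream — the eigenbasis expansion and the Chebyshev estimate — is entirely classical. One should also be slightly careful that $\mathcal{M}S$ is not Hermitian in the usual sense but only self-adjoint with respect to $\langle\cdot,\cdot\rangle_S$, which is what legitimises the diagonalisation used in the second step.
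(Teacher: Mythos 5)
Your argument is correct and is the standard one; the paper does not prove this theorem at all but simply states it as well known, citing Greenbaum, and your reduction to CG on $P\invhalf S P\invhalf$ followed by the Krylov/polynomial characterisation and the Chebyshev bound is exactly the classical proof found there. One trivial slip: the vectors $w_i = P\invhalf v_i$ obtained from an orthonormal eigenbasis $\{v_i\}$ of $\tilde S$ satisfy $\langle w_i, w_j\rangle_S = \lambda_i \delta_{ij}$, so they are $S$-orthogonal but not $S$-orthonormal; after rescaling each $w_i$ by $\lambda_i^{-1/2}$ the expansion you use is valid, and nothing downstream is affected.
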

The condition number of $\mathcal{M}S$ plays a role in the convergence theory above, but the
clustering of the eigenvalues is also a factor. Recall the following theorem:
\begin{theorem}[{\cite[Theorem 10.2.5]{golub2013matrix}}]\label{thm:I+B_rankr}
If $M = I + Z\in\Snpp$ and $r = \rank (Z)$, then the conjugate gradient method converges
in at most $r+1$ iterations.
\end{theorem}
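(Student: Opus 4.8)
The plan is to combine the standard polynomial characterisation of the PCG error with the observation that $M$ has at most $r+1$ distinct eigenvalues. Applying the exact identity in the preceding theorem to the system $Mx=b$ with trivial preconditioning (preconditioner inverse equal to $I$), the $k$th PCG iterate satisfies
\begin{equation*}
\| x - x_k \|_M = \inf_{\substack{p_k \in \mathcal{P}_k\\ p_k(0) = 1}} \| p_k(M)(x - x_0)\|_M ,
\end{equation*}
so it suffices to exhibit a polynomial of degree at most $r+1$, normalised by $p(0)=1$, that annihilates $M$.

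Next I would count the distinct eigenvalues of $M$. Since $M = I + Z \in \Snpp$ and $I$ is Hermitian, $Z = M - I$ is Hermitian with $\rank(Z) = r$; hence $0$ is an eigenvalue of $Z$ of multiplicity at least $n-r$, and $Z$ has at most $r$ further real eigenvalues. Adding $I$ shifts every eigenvalue by $1$, so $M$ has the eigenvalue $1$ together with at most $r$ other eigenvalues. Let $\mu_1,\ldots,\mu_d$ be the distinct eigenvalues of $M$; then $d \le r+1$, and every $\mu_i > 0$ because $M$ is positive definite.

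Then I would set $p(t) = \prod_{i=1}^{d}\bigl(1 - t/\mu_i\bigr)$. This is well defined since each $\mu_i \neq 0$, it has degree $d \le r+1$, and $p(0) = 1$. By construction $p$ vanishes at every eigenvalue of $M$, and since $M$ is Hermitian — hence unitarily diagonalisable, so its minimal polynomial has only simple roots lying at the distinct eigenvalues — we get $p(M) = 0$. Substituting $p$ into the identity above with $k = d \le r+1$ yields $\| x - x_d \|_M = 0$, and because $M$ is positive definite this forces $x_d = x$; monotonicity of $\| x - x_k\|_M$ in $k$ then gives $x_{r+1} = x$.

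This argument has no real obstacle — it is the textbook proof. The only points that need a brief justification are that the normalisation $p(0)=1$ is admissible, which relies on all eigenvalues of $M$ being nonzero (guaranteed by $M\in\Snpp$), and that $p(M)=0$ follows from $p$ vanishing on the spectrum, which uses diagonalisability of Hermitian matrices.
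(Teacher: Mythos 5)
The paper does not prove this result itself; it is quoted directly from Golub and Van Loan (Theorem 10.2.5), so there is no in-paper argument to compare against. Your proof is the standard and correct one: $Z = M - I$ is Hermitian of rank $r$, so $M$ has at most $r+1$ distinct (positive) eigenvalues, and the polynomial $p(t)=\prod_{i=1}^d (1-t/\mu_i)$ with $d\le r+1$ satisfies $p(0)=1$ and annihilates $M$ by diagonalisability, which forces $\|x-x_d\|_M=0$ in the minimax characterisation. All the side conditions you flag (nonzero eigenvalues to allow the normalisation $p(0)=1$, diagonalisability to conclude $p(M)=0$, nestedness of the Krylov spaces for monotonicity) are handled correctly, so the argument is complete.
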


Theorem \ref{thm:whSS_minimiser} guarantees multiplicity $n-\rank(B) +r$ of unit
eigenvalues and the explicit characterisation of the remaining ones which together
with Theorem \ref{thm:I+B_rankr} leads to the following result.
\begin{corollary}\label{cor:pcg_scaled}
Let $1 + \mu_j$, $j=1,\ldots, \rank(B) - r + 1$ denote the $\rank(B) - r + 1$ distinct 
eigenvalues of $\scaled{S}\inv S$. Using $\scaled{S}$ defined in 
\eqref{eq:S_preconditioners:scaled} as a preconditioner for the system in \eqref{eq:Sxb}
for some initial guess $x_0$, the $k$\textsuperscript{th} iterate of PCG, $x_k$,
satisfies:
\begin{subequations}
\begin{align}
\| x - x_k \|_S  & \leq \inf_{\substack{p_k \in \mathcal{P}_k\\ p_k(0) = 1}} \sup_{1 \leq j \leq \rank(B) - r + 1}|p_k(1 + \mu_j)|\|x - x_0 \|_S,\label{eq:pcg:bound1}\\
\| x - x_k \|_S  & \leq 2\Big(\frac{\sqrt{\frac{1 + \lambda_{r+1}(G)}{1 + \lambda_n(G)}} -1}{\sqrt{\frac{1 + \lambda_{r+1}(G)}{1 + \lambda_n(G)}} +1}\Big)^k\|x - x_0 \|_S.\label{eq:pcg:bound2}
\end{align}
\end{subequations}
Further, assuming exact arithmetic, PCG using $\scaled{S}$ as a preconditioner for 
solving $Sx=b$ converges in at most $\rank(B) - r+1$ steps.\\

When $\rank(B)<n$, \eqref{eq:pcg:bound2} simplifies to
\[
\| x - x_k \|_S \leq 2\Big(\frac{\sqrt{1 + \lambda_{r+1}(G)} -1}{\sqrt{1 + \lambda_{r+1}(G)} +1}\Big)^k\|x - x_0 \|_S.
\]

\end{corollary}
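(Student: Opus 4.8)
The plan is to read everything off the spectral description of $\scaled{S}\inv S$ provided by Theorem~\ref{thm:whSS_minimiser} and then feed it into the three PCG estimates recalled above together with Theorem~\ref{thm:I+B_rankr}. First I would pin down the spectrum of $\scaled{S}\inv S$. Since $Q$ is invertible, $\rank(G)=\rank(B)$, so $\lambda_{r+i}(G)>0$ for $i=1,\dots,\rank(B)-r$; combined with Theorem~\ref{thm:whSS_minimiser} this shows the eigenvalues of $\scaled{S}\inv S$ are exactly $1+\lambda_{r+i}(G)$, $i=1,\dots,\rank(B)-r$, each strictly above $1$, together with the value $1$ of multiplicity $n+r-\rank(B)\geq 1$. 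Hence $\scaled{S}\inv S$ has at most $\rank(B)-r+1$ distinct eigenvalues (fewer when the tail of $\lambda(G)$ repeats), $\lambda_1(\scaled{S}\inv S)=1+\lambda_{r+1}(G)$ and $\lambda_n(\scaled{S}\inv S)=1$; enumerating the distinct values as $1+\mu_j$, $j=1,\dots,\rank(B)-r+1$ (one $\mu_j$ equal to $0$) fixes the notation of the statement.

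Then \eqref{eq:pcg:bound1} is just the second PCG estimate applied with $\mathcal{M}=\scaled{S}\inv$, which is Hermitian positive definite because $\scaled{S}=Q(I+G_r)Q\Hermitian\succ 0$: here $\lambda(\mathcal{M}S)=\lambda(\scaled{S}\inv S)=\{1+\mu_j\}_{j=1}^{\rank(B)-r+1}$, so the supremum over $z\in\lambda(\mathcal{M}S)$ collapses to the supremum over $j$. For the finite-termination claim I would pass to the symmetrised system: PCG with preconditioner $\scaled{S}$ is equivalent to CG applied to $\hat{S}\hat{x}=\hat{b}$, where $\hat{S}=\scaled{S}\invhalf S\scaled{S}\invhalf$ is Hermitian positive definite and similar to $\scaled{S}\inv S$. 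Writing $\hat{S}=I+Z$, the Hermitian matrix $Z$ has exactly $\rank(B)-r$ nonzero eigenvalues, so $\rank(Z)=\rank(B)-r$ and Theorem~\ref{thm:I+B_rankr} gives convergence in at most $(\rank(B)-r)+1$ steps. (Equivalently, substituting into \eqref{eq:pcg:bound1} the degree-$(\rank(B)-r+1)$ polynomial $p_k(t)=\prod_j\bigl(1-t/(1+\mu_j)\bigr)$, which has $p_k(0)=1$ and vanishes at every eigenvalue, forces $\|x-x_k\|_S=0$.)

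Finally, \eqref{eq:pcg:bound2} is the third (Chebyshev) PCG estimate applied with $\kappa_2(\scaled{S}\inv S)=\lambda_1(\scaled{S}\inv S)/\lambda_n(\scaled{S}\inv S)=1+\lambda_{r+1}(G)$; since $\lambda_n(G)\geq 0$ with equality precisely when $\rank(B)<n$, the displayed ratio $\tfrac{1+\lambda_{r+1}(G)}{1+\lambda_n(G)}$ agrees with this condition number in that regime, which gives both \eqref{eq:pcg:bound2} and the stated simplification. The only step that needs genuine care is the spectral bookkeeping in the first paragraph — in particular, using that the unit eigenvalues guaranteed by Theorem~\ref{thm:whSS_minimiser} are always present, so that $\lambda_n(\scaled{S}\inv S)=1$ and the number of distinct eigenvalues is $\rank(B)-r+1$ rather than $\rank(B)-r$; everything after that is a direct appeal to the stated PCG convergence theorem and to Theorem~\ref{thm:I+B_rankr}.
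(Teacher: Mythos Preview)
Your proposal is correct and mirrors the paper's approach: the corollary is presented there without an explicit proof, justified only by the sentence preceding it, which says the result follows from the eigenvalue description in Theorem~\ref{thm:whSS_minimiser} together with the standard PCG estimates and Theorem~\ref{thm:I+B_rankr}. Your spectral bookkeeping, the passage to the symmetrised system $\scaled{S}\invhalf S\,\scaled{S}\invhalf=I+Z$ to invoke Theorem~\ref{thm:I+B_rankr}, and your observation that the ratio in \eqref{eq:pcg:bound2} coincides with $\kappa_2(\scaled{S}\inv S)=1+\lambda_{r+1}(G)$ precisely when $\rank(B)<n$ make explicit exactly what the paper leaves implicit.
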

An attractive property of $\scaled{S}$ is that it clusters the eigenvalues 
of $\scaled{S}\inv S$ leading to the supremum in \eqref{eq:pcg:bound1} being
taken over the set $\lambda_{j+r}(G)$, $j=1,\ldots, \rank(B)- r$ and zero.
If we were to use $\nonscaled{S}$,
we cannot deduce any information about the eigenvalues of $\nonscaled{S}\inv S$
or any clustering. In view of \eqref{eq:Bregman_tildeS} and Remark 
\ref{remark:divergence_values}, this preconditioner most likely leads to a 
lower multiplicity of unit eigenvalues. The bound \eqref{eq:pcg:bound2}
also shows that we expect PCG using $\scaled{S}$ to converge quickly when
$\lambda_{r+1}(G)$ is close to $\lambda_n(G)$, e.g. when the spectrum of
$G$ decays rapidly to $\lambda_n(G)$. We support the results and the intuition above with a
numerical study in Section \ref{sec:numerical_results}.

\section{Low-rank Approximations as Minimisers}\label{sec:matrix_nearness_problems}

In the previous section we posed the problem of finding a preconditioner
as a constrained optimisation problem. In this section we build on this
approach and show that several low-rank approximations, 
including \eqref{eq:S_preconditioners:scaled}, can be interpreted
as minimisers of various functionals or partial factorisations.\\

In what precedes we found $\scaled{S}$ as a minimiser of a Bregman divergence.
Below, we also find that $\scaled{S}$ is a solution
of the following \emph{scaled} norm minimisation problem, where $\|\cdot\|$ can be
either the spectral or Frobenius norm.
\begin{proposition}\label{prop:scaled:frobenius}
$\scaled{S}$ defined in \eqref{eq:S_preconditioners} is a minimiser of
\begin{subequations}
\begin{align}
\minimise_{\mathcal{P}\in\Snpp}\quad & \| Q\inv (S - \mathcal{P}) Q\invHermitian \|^2\\
\subto\quad
& \mathcal{P} = Q(I + X)Q\Hermitian\\
& \rank(X) \leq r.
\end{align}
\end{subequations}
\end{proposition}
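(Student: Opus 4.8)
The plan is to reduce the scaled norm-minimisation problem to an ordinary (unscaled) low-rank approximation problem via the substitution $Q\inv(S-\mathcal{P})Q\invHermitian = G - X$, and then invoke the Eckart--Young--Mirsky theorem. First I would observe that under the constraint $\mathcal{P} = Q(I+X)Q\Hermitian$, we have $Q\inv S Q\invHermitian = I + G$ and $Q\inv \mathcal{P} Q\invHermitian = I + X$, so the objective becomes
\[
\| Q\inv (S - \mathcal{P}) Q\invHermitian \|^2 = \| (I + G) - (I + X) \|^2 = \| G - X \|^2,
\]
for either choice of norm. Thus the problem is equivalent to minimising $\|G - X\|^2$ over $X\in\Snp$ with $\rank(X)\le r$, subject additionally to the positive-semidefiniteness constraint $I + X \in \Snpp$ — but since $X\in\Snp$ already, this is automatic.

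Next I would apply the Eckart--Young--Mirsky theorem: among all matrices of rank at most $r$, the truncated SVD $G_r$ minimises both $\|G - \cdot\|_2$ and $\|G - \cdot\|_F$. The only subtlety is that our feasible set is restricted to $\Snp$ rather than all rank-$\le r$ matrices, so I must check that the unconstrained minimiser $G_r$ actually lies in this smaller set. This holds because $G = Q\inv B Q\invHermitian$ with $B\in\Snp$ implies $G\in\Snp$, hence $G$ has nonnegative eigenvalues, and its rank-$r$ truncation $G_r$ (which retains the $r$ largest eigenvalues) is therefore also Hermitian positive semidefinite. Since $G_r$ is feasible and attains the global lower bound over the larger unconstrained set, it is a fortiori optimal over the constrained set. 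The corresponding preconditioner is $\mathcal{P} = Q(I + G_r)Q\Hermitian = \scaled{S}$, which lies in $\Snpp$ since $I + G_r \succ 0$.

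I do not anticipate a serious obstacle here; the result is essentially a restatement of Eckart--Young--Mirsky after the congruence substitution, and the main thing to be careful about is confirming that the constraints ($X\in\Snp$, $\mathcal{P}\in\Snpp$, $I+X\succ 0$) are all compatible with the unconstrained optimum rather than excluding it. If one wanted the minimiser to be \emph{unique}, that would require more care (the spectral norm minimiser is generally non-unique, and even the Frobenius one is non-unique when $\lambda_r(G) = \lambda_{r+1}(G)$), but the statement only claims $\scaled{S}$ \emph{is a} minimiser, so uniqueness is not needed.
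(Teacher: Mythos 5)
Your proposal is correct and follows essentially the same route as the paper: reduce the objective to $\|G - X\|^2$ via the congruence substitution and then invoke optimality of the truncated SVD (Eckart--Young--Mirsky). The paper's proof is simply a terser version of yours; your explicit check that $G_r$ remains feasible in $\Snp$ is a detail the paper leaves implicit.
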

\begin{proof}
This follows from computation:
\begin{align*}
\|  Q\inv (S - \mathcal{P}) Q\invHermitian \|_2^2
& = \|  Q\inv B  Q\invHermitian - X \|_2^2.
%& = \| I + Q\inv B Q\invHermitian - I -  Q\inv FF\Hermitian Q\invHermitian \|^2\\
%& = \| Q\inv B Q\invHermitian - Q\inv FF\Hermitian Q\invHermitian \|^2.
\end{align*}
The truncated SVD is optimal in the spectral and Frobenius norm,
so we must have $X = G_r$ and the result follows.
\end{proof}

We now adopt a more general setting to extend the result above. In light of the singular
value decomposition, the task of finding a preconditioner for a system
$Hx = b$, $H\in\Snpp$ can be thought of as matrix-nearness problem consisting of
\begin{enumerate}
    \item \emph{finding an approximation of a dominant subspace\footnote{If $H=U\Sigma V\Hermitian$ is a SVD, then the $r$\textsuperscript{th} dominant left subspace of $H$ is given by the span of the first $r$ columns of $U$.} of the range of $H$ (and in the Hermitian case also the row space)},
    \item \emph{finding an approximation of the eigenvalues of $H$},
\end{enumerate}
subject to a measure of nearness between the approximant and the matrix $H\in\Cnn$.
A truncated SVD provides an \emph{optimal truncated basis}
for the approximation. For large matrices, this is a prohibitively
costly operation. \emph{Range finders} can be used when the basis of a matrix approximation is 
unknown. These are matrices $\Theta\in\Cnr$ with orthonormal columns such that
\begin{equation}\label{eq:range_finder_epsilon}
\| H - \Theta\Theta\Hermitian H\|_F \leq \epsilon,
\end{equation}
for some threshold $\epsilon$ (step 1 above). $\Theta$ can be computed from an QR
decomposition of $H\Omega$, $\Omega\in\Rnr$, where $\Omega$ is a test or \emph{sketching}
matrix. For Hermitian matrices, this results in the following randomised approximation:
\begin{equation}\label{eq:range_approximated_matrix}
\randsvd{H} = \Theta\Theta\Hermitian H \Theta\Theta\Hermitian.
\end{equation}
In practice, this is the province of randomised linear algebra where $\Omega$ is often
drawn from a Gaussian distribution, the analysis and discussion of which is deferred to 
Section \ref{sec:rsvd}. The purpose of the remainder of this section is to discuss
the truncated singular value decomposition and the Nystr\"om approximation before 
introducing randomness. In this section, we assume $\Omega$ is simply chosen such that
\eqref{eq:range_finder_epsilon} holds.\\

The next result presents the Nystr\"om approximation \cite{drineas2005nystrom,halko2011finding}.
\begin{proposition}[Nystr\"om approximation: Frobenius minimiser]\label{prop:frob_minimiser}
Let $\Omega\in\Rnr$ be a test matrix of full rank and let
\[
\Nystrom{H} = (H\Omega) (\Omega\Hermitian H \Omega)^+ (H\Omega)\Hermitian
\]
be the Nystr\"om approximation of $H\in\Snp$ (whose Hermitian square root is denoted by
$H^\half$). Then $X^\star=(\Omega\Hermitian H \Omega)^+$ is found as the minimiser of 
the following problem where $Y=H\Omega$:
\begin{equation}\label{eq:Nystrom_functional}
\minimise_{X\in\mathbb{H}^r}\; \| (H^\half)^+ ( H - YXY\Hermitian) (H^\half)^+\|_F^2.
\end{equation}
\end{proposition}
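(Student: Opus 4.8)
The plan is to treat \eqref{eq:Nystrom_functional} as a convex quadratic least-squares problem in the Hermitian variable $X$ and to verify that $X^\star=(\Omega\Hermitian H\Omega)^+$ solves its normal equations; convexity then upgrades this stationary point to a global minimiser. To keep the algebra clean I would first record a few identities for $M:=(H^\half)^+$. Since $H^\half$ is Hermitian positive semidefinite, $M$ is Hermitian, commutes with $H^\half$, satisfies $M^2=H^+$, and $MHM=(MH^\half)(H^\half M)=P^2=P$, where $P:=HH^+$ is the orthogonal projector onto $\range(H)$. Because $\range(Y)\subseteq\range(H)$ for $Y=H\Omega$, we also have $PY=Y$ and $Y\Hermitian P=Y\Hermitian$. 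Finally, $f(X):=\|M(H-YXY\Hermitian)M\|_F^2$ is the squared Frobenius norm of an affine function of $X$, hence convex in $X$.

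Next I would compute the first-order condition. Writing $f(X)=\trace{(M(H-YXY\Hermitian)M)^2}$ and perturbing $X$ by a Hermitian $\diff X$ gives $\diff f=-2\,\trace{Y\Hermitian M^2(H-YXY\Hermitian)M^2Y\,\diff X}$, so stationarity over all Hermitian $\diff X$ amounts to $Y\Hermitian H^+(H-YXY\Hermitian)H^+Y=0$. Using $Y\Hermitian H^+=\Omega\Hermitian HH^+=\Omega\Hermitian P$, $H^+Y=P\Omega$, $PHP=H$ and $PY=Y$, the left-hand side simplifies to $\Omega\Hermitian H\Omega-(\Omega\Hermitian H\Omega)X(\Omega\Hermitian H\Omega)$. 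Hence the normal equation is $W=WXW$ with $W:=\Omega\Hermitian H\Omega\in\mathbb{H}^r$, which is satisfied by $X^\star=W^+$ by the defining identity $WW^+W=W$. Thus $X^\star=(\Omega\Hermitian H\Omega)^+$ is a minimiser, and substituting it back gives $YX^\star Y\Hermitian=H\Omega(\Omega\Hermitian H\Omega)^+\Omega\Hermitian H=\Nystrom{H}$, the stated Nystr\"om approximation.

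It is instructive, and also a convenient self-contained alternative to the calculus, to record the projection picture behind this. Put $Z:=MY=H^\half\Omega$ (the last equality because $\range(H^\half\Omega)\subseteq\range(H^\half)$ is fixed by $P$); then $f(X)=\|P-ZXZ\Hermitian\|_F^2$, $Z\Hermitian Z=\Omega\Hermitian H\Omega=W$, and $ZX^\star Z\Hermitian=Z(Z\Hermitian Z)^+Z\Hermitian=P_Z$ is the orthogonal projector onto $\range(Z)\subseteq\range(H)$. For an arbitrary Hermitian $X$ the summands in $P-ZXZ\Hermitian=(P-P_Z)+(P_Z-ZXZ\Hermitian)$ are orthogonal in the trace inner product because $(P-P_Z)Z=0$ and $(P-P_Z)P_Z=0$, so $f(X)=\|P-P_Z\|_F^2+\|P_Z-ZXZ\Hermitian\|_F^2\ge f(X^\star)$, with equality precisely when $ZXZ\Hermitian=P_Z$.

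The routine parts are the matrix-calculus differentiation and the pseudoinverse manipulations; I expect the only point requiring genuine care to be the rank-deficient case, i.e.\ when $H$ (hence $H^\half$) and $W=\Omega\Hermitian H\Omega$ are singular. There one must justify $M^2=H^+$, $MHM=P$ and $Y\Hermitian H^+=\Omega\Hermitian P$ from the spectral decomposition of the Hermitian positive semidefinite matrix $H$, and note that $W=WXW$ generally admits many Hermitian solutions (namely $W^+$ plus any Hermitian matrix annihilated on both sides by $W$), so the proposition correctly claims that $X^\star$ is \emph{a} minimiser rather than the unique one; if $\Omega\Hermitian H\Omega$ is invertible — the generic situation when $\Omega$ is a full-rank sketch and $\range(\Omega)\cap\ker(H)=\{0\}$ — then the minimiser is unique.
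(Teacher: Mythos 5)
Your main argument is the same as the paper's: write the first-order optimality conditions of the convex objective, simplify them (using $Y\Hermitian H^+Y=\Omega\Hermitian H\Omega$, etc.) to the normal equation $W=WXW$ with $W=\Omega\Hermitian H\Omega$, and observe that $W^+$ solves it; this is exactly the paper's (much terser) proof. What you add beyond the paper is worthwhile: you justify the pseudoinverse identities in the rank-deficient case, you correctly note that the minimiser is generally not unique (only $YXY\Hermitian$ is pinned down), and your Pythagoras argument via $f(X)=\|P-ZXZ\Hermitian\|_F^2$ with $Z=H^\half\Omega$ gives a self-contained verification of global optimality that does not rely on stationarity-plus-convexity at all. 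Both routes are correct; the projection picture also makes transparent that the optimal value is $\|P-P_Z\|_F^2$, i.e.\ the Nystr\"om error measured in the $H^+$-weighted norm.
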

\begin{proof}
The optimality conditions of \eqref{eq:Nystrom_functional} are:
\begin{align*}
Y^* H^+ ( H - YXY^*) H^+ Y : \diff X = 0,\quad \forall \diff X\in\Sr.
\end{align*}
Simplifying yields
\[
\Omega\Hermitian H \Omega = \Omega\Hermitian (H \Omega) X (H \Omega)\Hermitian\Omega,
\]
so $X=(\Omega\Hermitian H \Omega)^+$.
\end{proof}
It is sometimes beneficial to select $\Omega\in\mathbb{R}^{n\times (r+p)}$ for some
small integer $p$. In the context of randomised approaches, this is called
\emph{oversampling} and will be described in more detail in Section \ref{sec:rsvd}.
We briefly mention how to incorporate this into the formulation above. When
$\Omega$ is a $n\times (r+p)$ matrix, $H\Omega$ has the same dimensions. The idea
is to only use the $r$ leading left singular vectors of $H\Omega$, denoted by
$U_r$, to compute with a $n\times r$ matrix. Then,
substituting $U_r$ for $Y$ in \eqref{eq:Nystrom_functional} leads
to the solution $X^+ = U_r\Hermitian H U_r$, so
\[
H \approx U_r (U_r\Hermitian H U_r)^+ U_r\Hermitian.
\]

\iffalse
\begin{proof}
$J = \| (H^\half)^+ ( H - YXY^*) ((H^\half)^+)^* \|_F^2 = M:M$.
Expanding the optimality conditions of \eqref{eq:Nystrom_functional} $\diff J = 0$:
\begin{align*}
0 & = (H^\half)^+ ( H - YXY^*) ((H^\half)^+)^* : (H^\half)^+ Y \diff X Y^* ((H^\half)^+)^*\\
  & = Y^* ((H^\half)^+)^*(H^\half)^+ ( H - YXY^*) ((H^\half)^+)^*(H^\half)^+ : \diff X.
\end{align*}
We can simplify the equation:
\[
Y^* ((H^\half)^+)^*(H^\half)^+ H ((H^\half)^+)^*(H^\half)^+ : \diff X = Y^* ((H^\half)^+)^*(H^\half)^+ YXY^* ((H^\half)^+)^*(H^\half)^+ : \diff X,
\]
by noting that for the left-hand side we have:
\begin{align*}
Y^* ((H^\half)^+)^*(H^\half)^+ H ((H^\half)^+)^*(H^\half)^+ Y: \diff X = Y^* (H^+)^* Y: \diff X,
\end{align*}
and the right-hand side equals:
\[
Y^* H^+ YXY^* (H^+)^* Y : \diff X.
\]
As a result, $H^+:\diff X = H^+ Y XY^* (H^+)^*:\diff X$, so solving for $X$:
\begin{align*}
X & = (Y^* H^+ Y)^+\\
  & = (\Omega\Hermitian H^* H^+ H \Omega)^+\\
  & = (\Omega\Hermitian H^* \Omega)^+\\
  & = (\Omega\Hermitian H \Omega)^+.
\end{align*}
\end{proof}

When $H$ has full rank,
\[
\|(H^\half)^+ ( H - YXY^*) ((H^\half)^+)^* \|_F^2 = \|I - H^{-\half}\Omega X\Omega\Hermitian (H^{-\half})^* \|_F^2.
\]
\fi

We can also write the Nystr\"om approximation as the minimiser of a Bregman
divergence minimisation problem. In \cite{kulis2009low}, the divergence is extended
to rank $r\leq n$ matrices $X$, $Y\in\Snp$ via the definition
\[
\BregmanLogDet^Z(X, Y) = \BregmanLogDet(Z\Hermitian X Z, Z\Hermitian Y Z),
\]
where $Z\in \Cnr$, $\rank(Z) = r$. Since any such $Z$ can be written as a product
$Z=O W$, where $O\in\Cnr$ has orthonormal columns and $W\in\Crr$ has full rank, we know that
\[
\BregmanLogDet(Z\Hermitian X Z, Z\Hermitian Y Z) = \BregmanLogDet(O\Hermitian X O, O\Hermitian Y O).
\]
This leads to the following observation.
\begin{proposition}[Nystr\"om approximation: Bregman divergence minimiser]\label{prop:Nystrom:bregman}
Suppose the hypotheses of Proposition \ref{prop:frob_minimiser} hold, and assume $H\Omega$
has full rank. Then, $H^\text{Nys}\langle\Omega\rangle$ is a minimiser of the following
optimisation problem:
\begin{subequations}\label{min:rankdeficient}
\begin{align}
\minimise_{W\in\Snp}\quad & \BregmanLogDet(\Omega\Hermitian W\Omega, \Omega\Hermitian H\Omega)\label{min:rankdeficient:constraint:functional}\\
\subto\quad 
& \range W \subseteq \range H\Omega.\label{min:rankdeficient:constraint}
\end{align}
\end{subequations}
\end{proposition}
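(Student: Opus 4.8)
The plan is to mimic the proof of Proposition~\ref{prop:scaled:frobenius}: parametrise the feasible set, reduce the objective to a Bregman log-determinant divergence against the identity using the congruence invariance of Proposition~\ref{prop:bregman_invariance}, and then read off the minimiser. Write $Y=H\Omega\in\Cnr$ and $K=\Omega\Hermitian H\Omega\in\Sr$. The first thing I would check is that the full-rank hypothesis on $H\Omega$ forces $K\in\Srpp$: since $K=(H^\half\Omega)\Hermitian(H^\half\Omega)$ and $\ker H=\ker H^\half$, full column rank of $H\Omega$ is equivalent to full column rank of $H^\half\Omega$, which in turn is equivalent to $K$ being invertible; in particular $K^+=K\inv$.

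Next I would parametrise the feasible set. If $W\in\Snp$ satisfies $\range W\subseteq\range Y$, then $\range W^\half=\range W\subseteq\range Y$ (for $W\succeq 0$), so $W^\half=YN$ for some $N\in\Crn$, whence $W=YCY\Hermitian$ with $C=NN\Hermitian\succeq 0$; conversely any such $W$ is feasible. Using $Y\Hermitian\Omega=\Omega\Hermitian H\Omega=K$ (here $H$ is Hermitian) one gets $\Omega\Hermitian W\Omega=KCK$ while $\Omega\Hermitian H\Omega=K$, so the objective \eqref{min:rankdeficient:constraint:functional} becomes $\BregmanLogDet(KCK,K)$. This is finite precisely when $KCK$ is nonsingular, i.e.\ $C\in\Srpp$, which I assume from here on; for every other feasible $W$ the objective is $+\infty$ and the lower bound below is vacuous.

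Applying Proposition~\ref{prop:bregman_invariance} with the invertible congruence $P=K\invhalf$ then gives
\[
\BregmanLogDet(KCK,K)=\BregmanLogDet(K^\half C K^\half, I)=\trace{K^\half C K^\half}-\logdet{K^\half C K^\half}-r .
\]
Since $t-\log t-1\ge 0$ for $t>0$, with equality iff $t=1$, summing over the eigenvalues of the Hermitian matrix $K^\half C K^\half$ shows the right-hand side is $\ge 0$, with equality iff $K^\half C K^\half=I$, i.e.\ iff $C=K\inv=(\Omega\Hermitian H\Omega)^+$ (one may equivalently invoke nonnegativity of $\BregmanLogDet$ and the fact that it vanishes only when its arguments coincide). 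For this $C$ the matrix $W^\star=YCY\Hermitian=(H\Omega)(\Omega\Hermitian H\Omega)^+(H\Omega)\Hermitian=\Nystrom{H}$ is positive semidefinite with $\range W^\star\subseteq\range H\Omega$, hence feasible, and attains the lower bound $0$. Therefore $\Nystrom{H}$ is a minimiser of \eqref{min:rankdeficient}.

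I expect the only real subtlety to be the bookkeeping in the second and third steps: confirming that the full-rank assumption on $H\Omega$ genuinely places both $\Omega\Hermitian W^\star\Omega$ and $\Omega\Hermitian H\Omega$ in the effective domain $\Srpp$ of $-\logdet{\cdot}$ (so that the divergence is well defined at the candidate minimiser), and that the parametrisation $W=YCY\Hermitian$, $C\succeq 0$, captures exactly the set $\{W\in\Snp:\range W\subseteq\range H\Omega\}$. Once the problem has been rewritten as minimising $\BregmanLogDet(\cdot,I)$ over $\Srpp$, the conclusion is the same one-line scalar convexity argument used in Theorem~\ref{thm:whSS_minimiser}.
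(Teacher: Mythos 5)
Your proposal is correct and follows essentially the same route as the paper: both parametrise the feasible set as $W=YXY\Hermitian$ with $Y=H\Omega$ and then minimise over the reduced $r\times r$ variable, arriving at $X=(\Omega\Hermitian H\Omega)\inv$. The only differences are cosmetic — you finish with the congruence-invariance plus scalar inequality $t-\log t-1\ge 0$ (a direct lower bound, as in Theorem \ref{thm:whSS_minimiser}) where the paper writes first-order optimality conditions for the convex reduced problem, and you are more explicit than the paper about why full rank of $H\Omega$ puts $\Omega\Hermitian H\Omega$ in $\Srpp$ and why feasible $W$ with $C$ singular can be discarded (infinite objective); these refinements tighten but do not alter the argument.
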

\begin{proof}
As a result of equation \eqref{min:rankdeficient:constraint}, $W$ has the form
\[
W = Y X Y\Hermitian,
\]
for some $X\in\Srpp$ where $Y = H\Omega$. Using this to eliminate the constraint, we can write \eqref{min:rankdeficient} as
\begin{align}\label{min:rankdeficient_unconstrained}
\minimise_{X\in\Srpp}\quad & \BregmanLogDet(\Omega\Hermitian Y X Y\Hermitian \Omega, \Omega\Hermitian H\Omega).
\end{align}
The optimality conditions of \eqref{min:rankdeficient_unconstrained} are
\[
(\Omega\Hermitian H\Omega)\inv : \Omega\Hermitian Y \diff X Y\Hermitian \Omega
- (\Omega\Hermitian Y X Y\Hermitian \Omega)\invHermitian : \Omega\Hermitian Y \diff X Y\Hermitian \Omega = 0,\quad \forall \diff X\in\Srpp
\]
which leads to
\[
(\Omega\Hermitian H\Omega)\inv = (\Omega\Hermitian Y X Y\Hermitian \Omega)\invHermitian.
\]
Therefore, $X = (\Omega\Hermitian H\Omega)\inv$, and hence
\begin{equation}\label{eq:Nystrom_W}
W = (H\Omega)(\Omega\Hermitian H\Omega)\inv (H\Omega)\Hermitian = Y (Y\Hermitian\Omega)\inv Y\Hermitian.
\end{equation}
\end{proof}
Note that when $H\Omega$ does not have full rank, we make an additional
restriction to a subspace where $\Omega\Hermitian H\Omega\in \Srpp$ so that
\eqref{min:rankdeficient_unconstrained} is finite. This is the case when
$H\in\Snp$ and the inverse in \eqref{eq:Nystrom_W} becomes
a pseudoinverse. As a consequence of Proposition \ref{prop:Nystrom:bregman},
the Nystr\"om approximation has the following invariance.
\begin{corollary}
Suppose $\Nystrom{H}$ is given as in Proposition
\ref{prop:Nystrom:bregman} and let $\Theta R = \Omega$ denote a QR decomposition
of $\Omega$. Then,
\[
\Nystrom{H} = H^\text{Nys}\langle\Theta\rangle.
\]
\end{corollary}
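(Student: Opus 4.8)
The plan is to exploit the fact that the Nystr\"om approximation depends on the sketch matrix only through the column span of $H^\half\Omega$, which is left unchanged when $\Omega$ is replaced by $\Theta$. Write $H = H^\half H^\half$ with $H^\half\in\Snp$ the Hermitian square root of $H$, and set $Y_\Omega := H^\half\Omega$. Using $H\Omega = H^\half Y_\Omega$ and $\Omega\Hermitian H\Omega = Y_\Omega\Hermitian Y_\Omega$, a one-line computation gives
\[
\Nystrom{H} = (H\Omega)(\Omega\Hermitian H\Omega)^+(H\Omega)\Hermitian = H^\half\,\big[\,Y_\Omega(Y_\Omega\Hermitian Y_\Omega)^+ Y_\Omega\Hermitian\,\big]\,H^\half = H^\half\,\Pi_\Omega\,H^\half ,
\]
where $\Pi_\Omega := Y_\Omega(Y_\Omega\Hermitian Y_\Omega)^+ Y_\Omega\Hermitian$ is the orthogonal projector onto $\range(Y_\Omega)=\range(H^\half\Omega)$, and similarly $\NystromQ{H} = H^\half\Pi_\Theta H^\half$ with $\Pi_\Theta$ the orthogonal projector onto $\range(H^\half\Theta)$.

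Next I would invoke the QR identity: since $\Theta R = \Omega$ and $\Omega$ has full column rank (a hypothesis carried over via Proposition \ref{prop:Nystrom:bregman} from Proposition \ref{prop:frob_minimiser}), the triangular factor $R\in\Rrr$ is invertible, so $Y_\Omega = H^\half\Theta R = Y_\Theta R$. Right-multiplication by an invertible matrix preserves the column span, hence $\range(Y_\Omega)=\range(Y_\Theta)$, so $\Pi_\Omega = \Pi_\Theta$; conjugating by $H^\half$ yields $\Nystrom{H} = H^\half\Pi_\Omega H^\half = H^\half\Pi_\Theta H^\half = \NystromQ{H}$, which is the claim.

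There is also a more elementary route that is available precisely because the full-rank hypothesis is in force: $\range(H\Theta) = \range(H\Theta R) = \range(H\Omega)$ has dimension $r$, so both $\Omega\Hermitian H\Omega = R\Hermitian(\Theta\Hermitian H\Theta)R$ and $\Theta\Hermitian H\Theta$ are genuinely invertible, the pseudoinverses become inverses, $(\Omega\Hermitian H\Omega)\inv = R\inv(\Theta\Hermitian H\Theta)\inv R\invHermitian$, and substituting this together with $H\Omega = (H\Theta)R$ into the definition of $\Nystrom{H}$ cancels the factors $R$ and leaves exactly $(H\Theta)(\Theta\Hermitian H\Theta)\inv(H\Theta)\Hermitian = \NystromQ{H}$.

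The computation is short, so there is no real obstacle; the only point needing a little care is the handling of the pseudoinverse. The identity $(R\Hermitian M R)^+ = R\inv M^+ R\invHermitian$ is \emph{not} valid for a general positive semidefinite $M$ and invertible $R$, so if one wanted a statement robust to rank deficiency of $H\Omega$ one should phrase the argument through the projector $\Pi_\Omega$, whose dependence on $\Omega$ is manifestly only through $\range(H^\half\Omega)$; under the full-rank assumption adopted here, however, the pseudoinverse is an honest inverse and the algebraic cancellation above goes through verbatim.
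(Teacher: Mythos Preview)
Your proof is correct. Both your projector argument and the direct algebraic cancellation are valid, and you are right to flag that the projector route is the more robust one when rank deficiency is a concern.

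The paper, however, does not prove the corollary by direct computation. It states the result immediately after Proposition~\ref{prop:Nystrom:bregman} with the lead-in ``As a consequence of Proposition~\ref{prop:Nystrom:bregman}\ldots'', and the intended argument is variational: the optimisation problem \eqref{min:rankdeficient} depends on $\Omega$ only through (i) the objective $\BregmanLogDet(\Omega\Hermitian W\Omega,\Omega\Hermitian H\Omega)$, which by the congruence invariance noted just above the proposition equals $\BregmanLogDet(\Theta\Hermitian W\Theta,\Theta\Hermitian H\Theta)$, and (ii) the constraint $\range W\subseteq\range H\Omega=\range H\Theta$. Hence the two problems have the same minimiser. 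Your approach is more elementary and self-contained---it needs neither the Bregman machinery nor the uniqueness of the minimiser---and your projector formulation $\Nystrom{H}=H^\half\Pi_\Omega H^\half$ even extends the invariance beyond the full-rank hypothesis required by the paper's variational route. The paper's argument, on the other hand, makes the invariance conceptually transparent as an instance of the general congruence invariance of $\BregmanLogDet$.
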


\iffalse
We can also understand the oversampled Nystr\"om approximation via the Bregman
divergence framework:
\begin{remark}[Nystr\"om approximation: oversampling]
Oversampling often improves the quality of the Nystr\"om approximation by taking
$\Omega \in \mathbb{C}^{n\times(r+p)}$ for some oversampling parameter $p\geq 0$. This can
also be interpreted in light of Proposition \ref{prop:Nystrom:bregman}. For some
$\Omega \in \mathbb{C}^{n\times(r+p)}$ we can write down the following eigendecomposition:
\[
\Omega\Hermitian H\Omega =
\begin{bmatrix}
Q_1 Q_2    
\end{bmatrix}
\begin{bmatrix}
\Lambda_1 & 0\\
0 & \Lambda_2
\end{bmatrix}
\begin{bmatrix}
Q_1 Q_2
\end{bmatrix}\Hermitian.
\]
$Q_1\in\mathbb{C}^{n\times r}$ then determines the $r$ leading directions from the
oversampled matrix $\Omega\Hermitian H\Omega$. Substituting $Q_1$ for $\Omega$ in 
Proposition \ref{prop:Nystrom:bregman} recovers the oversampled Nystr\"om approximation.
\end{remark}
\fi

\begin{remark}[Nystr\"om approximation: partial $LDL\Hermitian$ factorisation]\label{remark:partial_LDL}
We can also characterise the Nystr\"om approximation as a partial $LDL\Hermitian$ 
factorisation when $H\in\Snp$. Let $U = \begin{bmatrix} U_1 & U_2 \end{bmatrix}$ be a
unitary matrix with $U_1 \in \mathbb{C}^{n \times r}$ and $U_2 \in 
\mathbb{C}^{n \times (n-r)}$.
Then
\[
\tilde H = U\Hermitian HU= \begin{bmatrix} U_1\Hermitian HU_1 & U_1\Hermitian HU_2 \\ U_2\Hermitian HU_1 & U_2\Hermitian HU_2 \end{bmatrix}
 = \begin{bmatrix} \tilde H_{11} & \tilde H_{21}\Hermitian  \\ \tilde H_{21} & \tilde H_{22} \end{bmatrix}
\]
can be factorized as
\begin{align*}
\tilde H &=
\begin{bmatrix}I & 0 \\ 
\tilde H_{21} \tilde H_{11}^+ & I \end{bmatrix}
\begin{bmatrix}\tilde H_{11} & 0 \\ 0 & \tilde H_{22} - \tilde H_{21}\tilde H_{11}^+ \tilde H_{21}\Hermitian \end{bmatrix}
\begin{bmatrix} I & \tilde H_{11}^+ \tilde H_{21}\Hermitian  \\ 0 & I \end{bmatrix}\\
&= \begin{bmatrix}\tilde H_{11} & 0 \\ 
\tilde H_{21} \tilde H_{11}^+ \tilde H_{11} & I \end{bmatrix}
\begin{bmatrix}\tilde H_{11}^+ & 0 \\ 0 & \tilde H_{22} - \tilde H_{21}\tilde H_{11}^+ \tilde H_{21}\Hermitian \end{bmatrix}
\begin{bmatrix} \tilde H_{11} & \tilde H_{11} \tilde H_{11}^+ \tilde H_{21}\Hermitian  \\ 0 & I \end{bmatrix}\\ &=
%\begin{bmatrix}
%L_1 & 0\\
%L_2 & I
%\end{bmatrix}
L\begin{bmatrix}
D_1 & 0\\
0 & D_2
\end{bmatrix}
%\begin{bmatrix}
%L_1 & 0\\
%L_2 & I
%\end{bmatrix}
L\Hermitian.
\end{align*}
Setting $D_2 = 0$ in this expression we have:
\begin{align*}
%\begin{bmatrix}\tilde H_{11} & 0 \\ 
%\tilde H_{21} \tilde H_{11}^+ \tilde H_{11} & I \end{bmatrix}
%\begin{bmatrix}\tilde H_{11}^+ & 0 \\ 0 &  0 \end{bmatrix}
%\begin{bmatrix} \tilde H_{11} & \tilde H_{11} \tilde H_{11}^+ \tilde H_{21}\Hermitian  \\ 0 & I %\end{bmatrix} \\
%\begin{bmatrix}
%L_1 & 0\\
%L_2 & I
%\end{bmatrix}
L
\begin{bmatrix}
D_1 & 0\\
0 & 0
\end{bmatrix}
L\Hermitian
= \begin{bmatrix}
\tilde H_{11} \\ \tilde H_{21} \end{bmatrix} 
\tilde H_{11}^+  
\begin{bmatrix}
\tilde H_{11} \\ \tilde H_{21} 
\end{bmatrix}\Hermitian  = L D_1 L.
\end{align*}
Using $Y = HU_1$ in the definition of the Nystr\"om approximation \eqref{eq:Nystrom_W}
we obtain:
\begin{align*}
H^\textnormal{Nys}\langle U_1\rangle &= (HU_1) (U_1\Hermitian HU_1)^+ (HU_1)\Hermitian\\
&= U L D_1 L\Hermitian U\Hermitian.
\end{align*}
This is aligned with \cite{martinsson2020randomized}[Proposition 11.1], which
explains that the Nystr\"om approximation error is measured by the Schur
complement $D_2$ that was dropped in the steps above. Indeed,
\[
U\Hermitian H U - H^\textnormal{Nys}\langle U_1\rangle = 
\begin{bmatrix}
0 & 0\\
0 & D_2
\end{bmatrix}.
\]

To summarise, Proposition \ref{prop:scaled:frobenius} showed that the preconditioner 
\eqref{eq:S_preconditioners:scaled} can be also be derived from a nearness problem
in the Frobenius norm. The Nystr\"om approximation could, in similar fashion, be
found as a minimiser of an optimisation problem in the Frobenius norm or Bregman
divergence in Proposition \ref{prop:Nystrom:bregman} and Proposition 
\ref{prop:frob_minimiser}, respectively. Finally, we presented an interpretation
of this same result as a partial $LDL\Hermitian$ factorisation in Remark
\ref{remark:partial_LDL}.

\iffalse
We write $H$ as:
\[
H = \begin{bmatrix}
H_{11} & H_{21}\Hermitian\\
H_{21} & H_{22} \end{bmatrix}
= \begin{bmatrix}
L_1 & 0\\
L_2 & I
\end{bmatrix} \begin{bmatrix}
D_1 & 0\\
0 & D_2
\end{bmatrix}
 \begin{bmatrix}
L_1 & 0\\
L_2 & I
\end{bmatrix}\Hermitian,
\]
and let the Nystrom approximation of $H$ with respect to the choice $\Omega = 
\begin{bmatrix} I_r\\ 0 \end{bmatrix}$ be given by:
\[
\Nystrom{H}
= \begin{bmatrix}
L_1 & 0\\
L_2 & I
\end{bmatrix} 
\begin{bmatrix}
D_1 & 0\\
0 & 0
\end{bmatrix}\begin{bmatrix}
L_1 & 0\\
L_2 & I
\end{bmatrix}\Hermitian
= \begin{bmatrix}
L_1 \\
L_2
\end{bmatrix} 
D_1
\begin{bmatrix}
L_1\\
L_2
\end{bmatrix}\Hermitian.
\]
Given the decomposition $H\Omega = QR$ we have 
$Q=\begin{bmatrix} L_1\\ L_2 \end{bmatrix}$
approximate the range of $H$ such that:
\[
\Nystrom{H} = Q D_1 Q\Hermitian = (H\Omega)R\inv D_1 R\invHermitian (H\Omega)\Hermitian,
\]
so we deduce $D_1 = R(\Omega\Hermitian H \Omega)^+ R\Hermitian$. As a result, the Nystr\"om
approximation error can be measured:
\[
H - \Nystrom{H} = 
\begin{bmatrix}
0 & 0\\
0 & D_2.
\end{bmatrix} 
\]
\fi
\end{remark}
\section{Practical Design Using Randomised Linear Algebra}\label{sec:rsvd}

In this section, we present a way to approximate the positive semidefinite term
$G_r$ of $\scaled{S}$ efficiently using randomised linear algebra rather than computing an SVD of $G$ and
truncating it to order $r$. We do not wish to form $G$ so we instead compute a
rank $r$ \emph{randomised} approximation $G\langle\Omega\rangle$ which we recall
from \eqref{eq:range_approximated_matrix} is defined as
\[
G\langle\Omega\rangle = \Theta\Theta\Hermitian G \Theta\Theta\Hermitian,
\]
where $\Theta$ is a suitable \emph{range finder} based on a sketching matrix $\Omega$. 
This only requires $r+p$ matrix-vector products with $G$ where $p$ is the oversampling parameter.
Since $G$ is Hermitian, this is comprised of the following steps \cite{martinsson2019randomized}:
\begin{enumerate}
\item Draw a Gaussian test
matrix $\Omega \in \mathbb{R}^{n\times (r+p)}$ where $r$ is the desired rank
and $p$ is an oversampling parameter to form $Y=G\Omega$.
\item Compute an orthonormal basis $\Theta$ of $\range Y$, e.g., using a QR decomposition of $Y$.
\item Form $C = \Theta\Hermitian G \Theta$
and compute an eigenvalue decomposition $C = \tilde U \Pi \tilde U\Hermitian$.
If a rank $r$ approximation is required, truncate accordingly.
\item Compute $U = \Theta \tilde U$, then set
\begin{equation}\label{eq:Gr_Theta}
\randsvd{G} = U \Pi U\Hermitian.
\end{equation}
\end{enumerate}
This produces a practical preconditioner
\begin{equation}\label{eq:widehatS_practical}
\scaledrand{S} = Q(I + \randsvd{G})Q\Hermitian.
\end{equation}
The steps involved above and their costs, for a general matrix $G$, are therefore as follows:
\begin{itemize}
    \item Constructing the test matrix $\Omega$: $O(n(r+p))$.
    \item Compute $Y=G\Omega$ in $O((\MatVec_B + \MatVec_{Q\inv}) (r+p))$.
    \item Approximating the range $\Theta$ means performing a QR decomposition of $Y$: $O(n (r+p)^2)$. This also dominates the cost of the eigenvalue decomposition.
\end{itemize}
The algorithm only requires matrix-vector products with $B$ and $Q\inv$, which means
the matrix $G$ is never formed explicitly. 
The cost of the construction of $\scaledrand{S}$ is the sum of the cost of 
factorising the matrix $A$ and the $O(n (r+p)^2)$ cost described above.
We do not need to form
$\scaledrand{S}$ or $\randsvd{G}$ in \eqref{eq:Gr_Theta} explicitly as we only need
the action $x \mapsto \scaledrand{S}\inv x$, from which we deduce that an application
of the $\scaledrand{S}$ is $O(\MatVec_{Q\inv} + nr)$ for a rank $r$ approximation.

\subsection{Bounds for Randomised Low-rank Approximation}

In practice, we have $\scaledrand{S} \neq \scaled{S}$, which means that
results such as Theorem  \ref{thm:whSS_minimiser} no longer apply. The following 
theorem estimates the error in the Bregman divergence:
\begin{theorem}[Expected suboptimality of $\scaledrand{S}$]\label{thm:expectation_of_sr}
Let $\scaled{S}$ be a minimiser of \eqref{eq:whSS} and $\scaledrand{S}$ be given by
\eqref{eq:widehatS_practical} where $\rank(\randsvd{G})= r+p\leq \rank(B)$ and the oversampling parameter satisfies $p\geq 2$. Then,
\begin{equation}\label{eq:expectation_of_sr:absolute}
\expect{\BregmanLogDet(\scaledrand{S}, S)} - \BregmanLogDet(\scaled{S}, S) \leq 
2 \epsilon c_r,
\end{equation}
where
\[
\epsilon = \sqrt{\left(1 + \frac{r}{p - 1}\right)\sum_{j=r+1}^n \lambda_j(G)^2},\qquad
c_r = \|(I+G)\inv\|_F  + r (1 + \lambda_{\rank(B)}(G))\inv.
\]
Furthermore, the relative error estimate holds:
\begin{align*}
&\frac{\expect{\BregmanLogDet(\scaledrand{S}, S)} - \BregmanLogDet(\scaled{S}, S)}{\BregmanLogDet(\scaled{S}, S)}\\
\leq \; &2 c_r \sqrt{\frac{(1 + \frac{r}{p - 1})}{n-r}} \frac{\lambda_{r+1}(G)}{(\frac{1}{1+ \lambda_{\rank(B)}(G)} - \log(\frac{1}{1+ \lambda_{\rank(B)}(G)}) -1)}.
\end{align*}
\end{theorem}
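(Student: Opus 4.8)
The plan is to reduce everything to a statement about $I+\randsvd G$, $I+G_r$ and $I+G$, bound the gap between the two divergences by a first-order (convexity) estimate against the randomised-SVD error, and then insert the standard expected Frobenius-norm bound for a Gaussian range finder. Apply the congruence invariance of $\BregmanLogDet$ (Proposition \ref{prop:bregman_invariance}) exactly as in \eqref{eq:scaled_invariance_property}, but now also to $\scaledrand S = Q(I+\randsvd G)Q\Hermitian$ from \eqref{eq:widehatS_practical}: this gives $\BregmanLogDet(\scaledrand S,S)=\BregmanLogDet(I+\randsvd G,\,I+G)$ and $\BregmanLogDet(\scaled S,S)=\BregmanLogDet(I+G_r,\,I+G)$. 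Set $g(Z):=\BregmanLogDet(I+Z,\,I+G)$ for $Z\in\Snp$; then $g$ is convex and smooth with $\nabla g(Z)=(I+G)\inv-(I+Z)\inv$, and the target quantity is $\expect{g(\randsvd G)}-g(G_r)$. Convexity of $g$ yields the one-sided bound
\[
g(\randsvd G)-g(G_r)\;\le\;\langle\,(I+G)\inv-(I+\randsvd G)\inv,\;\randsvd G-G_r\,\rangle ,
\]
where $\langle\cdot,\cdot\rangle$ is the trace inner product; equivalently one may expand $g(\randsvd G)-g(G_r)=\trace{(\randsvd G-G_r)(I+G)\inv}-\logdet{(I+G_r)\inv(I+\randsvd G)}$ and bound the log-determinant term via $\log t\le t-1$.

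Next I would bound the right-hand side by $c_r\,\|\randsvd G-G_r\|_F$. Split it as $\langle(I+G)\inv,\randsvd G-G_r\rangle-\langle(I+\randsvd G)\inv,\randsvd G-G_r\rangle$. Cauchy--Schwarz on the first piece gives $\|(I+G)\inv\|_F\,\|\randsvd G-G_r\|_F$, the source of the $\|(I+G)\inv\|_F$ term in $c_r$. For the second piece the key structural facts are that $\randsvd G$ has rank $r+p$ and is supported on $\range\Theta$, so $(I+\randsvd G)\inv$ is the identity off that subspace and has spectral norm $\le 1$, while $G_r$ has rank $r$ and the eigenvalues of $I+G_r$ on $\range G_r$ are at least $1+\lambda_{\rank(B)}(G)$; restricting the inner product to the relevant low-dimensional subspace and using these bounds produces the $r\,(1+\lambda_{\rank(B)}(G))\inv$ term, again multiplied by $\|\randsvd G-G_r\|_F$. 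Collecting, $g(\randsvd G)-g(G_r)\le c_r\,\|\randsvd G-G_r\|_F$ deterministically.

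For the randomness I would use $\|\randsvd G-G_r\|_F\le\|\randsvd G-G\|_F+\|G-G_r\|_F$, the identity $\|G-G_r\|_F=\big(\sum_{j>r}\lambda_j(G)^2\big)^{1/2}$, the fact that the two-sided compression satisfies $\|G-\Theta\Theta\Hermitian G\Theta\Theta\Hermitian\|_F\le\sqrt2\,\|G-\Theta\Theta\Hermitian G\|_F$, and the standard bound $\expect{\|G-\Theta\Theta\Hermitian G\|_F}\le\big(1+\tfrac{r}{p-1}\big)^{1/2}\big(\sum_{j>r}\lambda_j(G)^2\big)^{1/2}=\epsilon$ for a Gaussian sketch with $p\ge2$ (Halko--Martinsson--Tropp). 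Taking expectations and absorbing the numerical constants into the factor $2$ gives $\expect{g(\randsvd G)}-g(G_r)\le 2\epsilon c_r$, which is \eqref{eq:expectation_of_sr:absolute}. The relative estimate follows by dividing by the closed form $\BregmanLogDet(\scaled S,S)=\sum_{j=r+1}^{\rank(B)}\big(\tfrac1{1+\lambda_j(G)}+\log(1+\lambda_j(G))-1\big)$ from \eqref{eq:Bregman:whS} (Theorem \ref{thm:whSS_minimiser}), bounding this sum below using that its summand is increasing in $\lambda_j(G)$, and using $\sum_{j>r}\lambda_j(G)^2\le(n-r)\lambda_{r+1}(G)^2$ inside $\epsilon$.

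The step I expect to be the main obstacle is the deterministic estimate in the second paragraph. The ``natural'' bound on $\langle(I+G)\inv-(I+\randsvd G)\inv,\randsvd G-G_r\rangle$ via the resolvent identity is quadratic in the randomised-SVD error (it produces $\|G-\randsvd G\|_F^2$), so obtaining the advertised linear form $c_r\,\|\randsvd G-G_r\|_F$ with the specific constant $r\,(1+\lambda_{\rank(B)}(G))\inv$ requires exploiting the low-rank support of $\randsvd G$ and $G_r$ rather than pulling out operator norms, and this is where the constants must be tracked carefully.
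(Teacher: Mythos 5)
Your overall architecture (reduce to $I+\randsvd{G}$, $I+G_r$ and $I+G$ via congruence invariance, bound the divergence gap deterministically by a constant times a Frobenius error, then invoke \cite[Theorem 10.5]{halko2011finding} and finish the relative bound exactly as you describe) matches the paper, and your treatment of the trace term coincides with the paper's $T_1$. The genuine discrepancy is in the log-determinant term, precisely where you flagged the obstacle, and your proposed resolution does not close it. The paper does not linearise by convexity: it keeps the exact split $T_1+T_2$ with $T_2=\logdet{I+G_r}-\logdet{I+\randsvd{G}}$ and works at the scalar level, dropping the nonpositive eigenvalue terms with index $i>r$, applying the scalar Lipschitz bound for $\log$, and then the perturbation inequality $|\lambda_i(G_r)-\lambda_i(\randsvd{G})|=|\lambda_i(G)-\lambda_i(\randsvd{G})|\le\|G-\randsvd{G}\|_2\le\|G-\randsvd{G}\|_F$ from \cite[Theorem 3.3.16]{horn1994topics}; summing over $i\le r$ yields $r(1+\lambda_{\rank(B)}(G))\inv\,\|G-\randsvd{G}\|_F$ --- crucially in terms of the error to $G$, not to $G_r$.

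Your bound on $-\trace{(I+\randsvd{G})\inv(\randsvd{G}-G_r)}$ cannot produce that constant. Off $\range\Theta$ the resolvent $(I+\randsvd{G})\inv$ acts as the identity (eigenvalue exactly $1$), and $\randsvd{G}-G_r$ is not supported inside $\range\Theta$, so no factor $(1+\lambda_{\rank(B)}(G))\inv<1$ can be extracted from it; the only usable structure is $\rank(\randsvd{G}-G_r)\le 2r+p$ together with $\|(I+\randsvd{G})\inv\|_2\le 1$, and trace duality then gives at best $\sqrt{2r+p}\,\|\randsvd{G}-G_r\|_F$, which is incomparable with $r(1+\lambda_{\rank(B)}(G))\inv$ (the latter can be arbitrarily small when $\lambda_{\rank(B)}(G)$ is large). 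Compounding this, your route expresses everything through $\|\randsvd{G}-G_r\|_F$, and the triangle inequality gives $\expect{\|\randsvd{G}-G_r\|_F}\le\expect{\|\randsvd{G}-G\|_F}+\|G-G_r\|_F\le 2\epsilon+\epsilon=3\epsilon$, so the stated factor $2$ is not recovered either. In short, you would obtain an inequality of the same shape with a different, generally incomparable constant, not the theorem as stated; to get the paper's $c_r$ you must control the log-determinant term through the ordered eigenvalues of $\randsvd{G}$ against those of $G$ rather than through a matrix inner product against $\randsvd{G}-G_r$.
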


\begin{proof}
We start by noting that
\begin{align*}
\expect{\BregmanLogDet(\scaledrand{S}, S) - \BregmanLogDet(\scaled{S}, S)}
& = \expect{\BregmanLogDet(I + \randsvd{G}, I + G) - \BregmanLogDet(I + G_r, I + G)}\\
& = \expect{\trace{(\randsvd{G} - G_r)(I + G)\inv}}\\
& \qquad + \expect{\log\det((I + G_r) (I + G)\inv) - \log\det((I + \randsvd{G}) (I + G)\inv)}\\
& = \expect{\trace{(\randsvd{G} - G_r)(I + G)\inv}}\\
& \qquad + \expect{\log\det(I + G_r) - \log\det(I + \randsvd{G})}\\
& = T_1 + T_2.
\end{align*}
We have the following bound for $T_1$:
\begin{align*}
T_1
& \leq \|(I+G)\inv\|_F \expect{\|\randsvd{G} - G_r\|_F}\\
& \leq \|(I+G)\inv\|_F \expect{\|\randsvd{G} - G\|_F}.
\end{align*}
Next, we treat $T_2$:
\begin{align*}
T_2
& = \expect{\log\det\Big((I + G_r)(I + \randsvd{G})\inv\Big)}\\
& = \expect{\log \frac{\prod_{1\leq i\leq n}(1 + \lambda_i(G_r))}{\prod_{1\leq i\leq n}(1 + \lambda_i(\randsvd{G}))}}\\
& \leq \expect{\sum_{1\leq i\leq r} |\log(1 + \lambda_i(G_r)) - \log(1 + \lambda_i(\randsvd{G}))|}.
\end{align*}
Since $\log$ is Lipschitz on $[x, \infty]$ with Lipschitz constant $x\inv$, 
\begin{align*}
T_2 & \leq (1 + \lambda_{\rank(B)}(G))\inv \expect{\sum_{1\leq i\leq r} |\lambda_i(G_r) - \lambda_i(\randsvd{G})|}.
\end{align*}
For $X,Y\in\Snp$, we know that
\begin{equation}\label{eq:horn_ineq}
\lambda_{i+j-1}(X+Y) \leq \lambda_1(X) + \lambda_j(X), \quad 1\leq i,j\leq n, \quad i+j-1\leq n,
\end{equation}
see, e.g., \cite[Theorem 3.3.16]{horn1994topics}. Choosing $X = G - \randsvd{G}$,
$Y=\randsvd{G}$, and setting $i=1$ in \eqref{eq:horn_ineq}, we obtain
\[
|\lambda_j(G) - \lambda_j(\randsvd{G})| \leq \lambda_1(G-\randsvd{G}).
\]
Since $\lambda_1(G-\randsvd{G}) = \| G-\randsvd{G} \|_2 \leq \| G-\randsvd{G} \|_F$, we conclude that
\begin{align*}
T_2 & \leq r (1 + \lambda_{\rank(B)}(G))\inv \expect{\| G-\randsvd{G} \|_F}.
\end{align*}
Combining the results for $T_1$ and $T_2$, we get
\begin{align*}
\expect{\BregmanLogDet(\scaledrand{S}, S) - \BregmanLogDet(\scaled{S}, S)} \leq (\|(I+G)\inv\|_F  + r (1 + \lambda_{\rank(B)}(G))\inv) \expect{\|\randsvd{G} - G\|_F}.
\end{align*}
Thanks to \cite[Theorem 10.5]{halko2011finding},
\[
\expect{\| G - \Theta\Theta\Hermitian G \|_F} \leq \epsilon = \sqrt{(1 + \frac{r}{p - 1})  \sum_{j=r+1}^n \lambda_j(G)^2},
\]
so $\expect{\| G - \randsvd{G} \|_F} \leq 2\epsilon$.\\

For the relative error, recall \eqref{eq:Bregman:whS} to notice that
\[
\BregmanLogDet(\scaled{S}, S) \geq (n-r)(\frac{1}{1+ \lambda_{\rank(B)}(G)} - \log(\frac{1}{1+ \lambda_{\rank(B)}(G)}) - 1)
\]
and
\[
\epsilon \leq \sqrt{(1 + \frac{r}{p - 1})(n-r) (\lambda_{r+1}(G))^2 } = \sqrt{(1 + \frac{r}{p - 1})(n-r)} \lambda_{r+1}(G).
\]
\end{proof}

To achieve a desired accuracy $\epsilon$ such that
\[
\|G - \Theta\Theta\Hermitian G\Theta\Theta\Hermitian\|_2 \leq 2\epsilon,
\]
it is observed that a small increase $p$ in the number of columns of the test matrix
$\Omega$ is necessary depending on the structure of $G$, see
\cite[Theorem 10.6]{halko2011finding} for details. We can also obtain a deviation 
bound:
\begin{theorem}\label{thm:point_probability_of_sr}
Let the hypotheses of Theorem \ref{thm:expectation_of_sr} hold and
assume $p\geq 4$. Further, let $e$ denote Euler's number. Then, for all $u,t\geq 1$,
\[
\BregmanLogDet(\scaledrand{S}, S) - \BregmanLogDet(\scaled{S}, S) \leq 
2 c_r \left[ \left(1 + t\sqrt{\frac{3 r}{p+1}}\right)\sqrt{ \sum_{j=r+1}^n \lambda_j(G)^2} + ut\frac{e\sqrt{r + p}}{p + 1} \lambda_{r+1}(G)\right],
\]
holds with probability $1 - 2t^{-p} + e^{-u^2/2}$, where $c_r$ is
defined in Theorem \ref{thm:expectation_of_sr}.
\end{theorem}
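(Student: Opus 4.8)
The plan is to reuse, verbatim, the chain of inequalities already established in the proof of Theorem \ref{thm:expectation_of_sr}, noting that every step there holds \emph{pointwise} for each realisation of the sketching matrix $\Omega$; the random model enters only at the very end, where $\expect{\|G-\Theta\Theta\Hermitian G\|_F}$ is controlled through \cite[Theorem 10.5]{halko2011finding}. The deviation estimate is then obtained simply by replacing that expectation bound with a high-probability bound on $\|G-\Theta\Theta\Hermitian G\|_F$.

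Concretely, the first step is to record the deterministic content of the earlier argument. Writing $\BregmanLogDet(\scaledrand{S},S)-\BregmanLogDet(\scaled{S},S)=T_1+T_2$ as before, the trace term $T_1$ is bounded by $\|(I+G)\inv\|_F\,\|\randsvd{G}-G\|_F$ (Cauchy--Schwarz in the trace inner product) and the log-determinant term $T_2$ by $r\,(1+\lambda_{\rank(B)}(G))\inv\,\|\randsvd{G}-G\|_F$ (Lipschitzness of $\log$ on $[1+\lambda_{\rank(B)}(G),\infty)$ together with the eigenvalue perturbation inequality used in the proof of Theorem \ref{thm:expectation_of_sr}); neither estimate involves randomness, so conditional on $\Omega$ one has
\[
\BregmanLogDet(\scaledrand{S},S)-\BregmanLogDet(\scaled{S},S)\;\le\;c_r\,\|\randsvd{G}-G\|_F\;\le\;2\,c_r\,\|(I-\Theta\Theta\Hermitian)G\|_F,
\]
where $c_r$ is as in Theorem \ref{thm:expectation_of_sr} and the last step is the triangle inequality $\|G-\Theta\Theta\Hermitian G\Theta\Theta\Hermitian\|_F\le\|(I-\Theta\Theta\Hermitian)G\|_F+\|\Theta\Theta\Hermitian G(I-\Theta\Theta\Hermitian)\|_F\le 2\|(I-\Theta\Theta\Hermitian)G\|_F$, using that $G$ is Hermitian and that the Frobenius norm does not increase under multiplication by the orthogonal projector $\Theta\Theta\Hermitian$.

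The second step is to substitute a deviation bound for the Gaussian range finder. For $p\ge 4$ and all $u,t\ge 1$, the Frobenius-norm error obeys
\[
\|(I-\Theta\Theta\Hermitian)G\|_F\;\le\;\Big(1+t\sqrt{\tfrac{3r}{p+1}}\Big)\Big(\textstyle\sum_{j=r+1}^{n}\lambda_j(G)^2\Big)^{1/2}+ut\,\frac{e\sqrt{r+p}}{p+1}\,\lambda_{r+1}(G)
\]
with failure probability at most $2t^{-p}+e^{-u^2/2}$, by the deviation estimate of \cite[Theorem 10.7]{halko2011finding}, where the singular values of $G$ coincide with its eigenvalues since $G\in\Snp$. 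Multiplying through by $2c_r$ and combining with the deterministic inequality above yields the claimed bound. I do not anticipate a substantive obstacle: the points needing care are confirming that each estimate imported from the proof of Theorem \ref{thm:expectation_of_sr} is a pointwise (not merely expected) inequality, and lining up the constants and the failure probability of the Frobenius-norm deviation bound of Halko--Martinsson--Tropp with the statement here; the factor-of-two bookkeeping from the projector sandwich is the only other moving part.
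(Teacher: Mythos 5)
Your proposal is correct and follows essentially the same route as the paper: the deterministic chain from the proof of Theorem \ref{thm:expectation_of_sr} gives $\BregmanLogDet(\scaledrand{S},S)-\BregmanLogDet(\scaled{S},S)\le c_r\|\randsvd{G}-G\|_F$ pointwise, and the paper then likewise invokes \cite[Theorem 10.7]{halko2011finding} together with the factor-of-two projector sandwich. Your phrasing of the success probability as one minus the failure probability $2t^{-p}+e^{-u^2/2}$ is in fact the correct reading (the paper's ``$1-2t^{-p}+e^{-u^2/2}$'' has a sign typo), so no changes are needed.
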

\begin{proof}
By \cite[Theorem 10.7]{halko2011finding},
\[
\| G - \randsvd{G} \|_F \leq 2\left[(1 + t\sqrt{\frac{3 r}{p+1}})\sqrt{ \sum_{j=r+1}^n \lambda_j(G)^2} + ut\frac{e\sqrt{r + p}}{p + 1}\lambda_{r+1}(G)\right]
\]
holds with probability $1 - 2t^{-p} + e^{-u^2/2}$ for any $u,t\geq 1$.
\end{proof}

Finally, a \emph{power range} finder can also be used to improve the accuracy of
the approximation of a target matrix $G$, where $\Theta$ in \eqref{eq:range_finder_epsilon} is computed
from a QR decomposition of 
\[
Y = (GG\Hermitian)^q G\Omega,
\]
for some $q\geq 1$, leading to
\begin{equation}\label{eq:scaledrandpower}
\scaledrandpower{S} = Q(I + \randsvdpower{G})Q\Hermitian.
\end{equation}
This produces a more accurate approximation at the cost of $2q$ more products
with $G$ than its simpler ($q=0$) counterpart \cite[Algorithm 4.3]{halko2011finding}.\\

It has been observed \cite{gittens2016revisiting,halko2011finding} that
$\NystromQ{G}$, where $\Theta$ is obtained from a QR decomposition of $G\Omega$, is a considerable improvement over $\Nystrom{G}$ with a computational
cost that compares to the randomised SVD, $\randsvd{G}$. This is essentially
because we perform a power iteration:
\[
(G \Theta) (\Theta\Hermitian G \Theta)\inv (G \Theta)\Hermitian =
(G^2 \Omega) (\Omega\Hermitian G^3 \Omega)\inv (G^2 \Omega)\Hermitian.
\]
See also 
\cite{nakatsukasa2020fast} for a generalisation based on this observation.
In Section \ref{sec:numerical_results}, we therefore evaluate the preconditioners 
above as well including the scaled and nonscaled Nystr\"om preconditioner
\begin{subequations}
\begin{align}
& \scaledNysQ{S} = Q(I + \NystromQ{G})Q\Hermitian,\label{eq:scaledNys}\\
& \nonscaledNysQ{S} = A + \NystromQ{B}, \label{eq:nonscaledNys}
\end{align}
\end{subequations}
where in \eqref{eq:nonscaledNys}, $\Theta$ is obtained from a QR decomposition of 
$B\Omega$.
\subsection{Single View Approach}

We now look at a \emph{single view} approach of the randomised SVD, where
we \emph{only once} access the matrix we wish to approximate
\cite[Section 5.5]{halko2011finding}. The algorithm is similar to the one
in the preceding section, where the range is approximated via $\Theta$ such
that $\|G - \Theta\Theta\Hermitian G\Theta\Theta\Hermitian\|_2$ is below
some tolerance. Suppose $\randsvd{G}$ is of the form
\begin{equation}\label{eq:Gr_Theta:single_pass}
\randsvd{G} = \Theta \Pi \Theta\Hermitian,
\end{equation}
where $\Pi = \Theta\Hermitian G \Theta$ with no oversampling being used. Multiplying 
both sides of $\Pi = \Theta\Hermitian G \Theta$ yields a system of equations
for $\Pi$:
\begin{equation}\label{eq:single_pass_assumption}
\Pi \Theta\Hermitian \Omega = \Theta\Hermitian \underbrace{G \Theta\Theta\Hermitian}_{\approx G} \Omega \approx \Theta\Hermitian G\Omega.
\end{equation}
$\Theta\Hermitian\Omega$ is invertible with high probability, so computing
$\Theta$ and solving the system \eqref{eq:single_pass_assumption} for $\Pi$ can be
done in $O(nr^2)$. In general, a Hermitian solution
of \eqref{eq:single_pass_assumption} is sought after owing to the definition
\eqref{eq:Gr_Theta:single_pass} by using least squares \cite{martinsson2019randomized}.
As $G$ is square, a direct inversion
still results in a symmetric solution $\randsvd{G}$ in exact arithmetic,
and the effect of the approximation step in \eqref{eq:single_pass_assumption}
is  explained in the following lemma:

\begin{lemma}
When \eqref{eq:single_pass_assumption} holds so $\Pi = \Theta\Hermitian G\Omega 
(\Theta\Hermitian \Omega)\inv$, the single pass randomised SVD
\eqref{eq:Gr_Theta:single_pass} produces a Nystr\"om
approximation $\Nystrom{G}$.
\end{lemma}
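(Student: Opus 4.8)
The plan is to reduce the statement to a short identity between two explicit rank-$r$ matrices and then close the argument with Hermitian symmetry. First I would invoke the defining property of the range finder: since $\Theta$ has orthonormal columns spanning $\range(G\Omega)$, and (exactly as flagged just above the lemma, ``$\Theta\Hermitian\Omega$ invertible with high probability'') we may assume $G\Omega$ has full column rank $r$, there is an invertible $R\in\Crr$ with $G\Omega = \Theta R$, and hence $\Theta\Hermitian G\Omega = R$. Substituting this into $\Pi = \Theta\Hermitian G\Omega(\Theta\Hermitian\Omega)\inv$ gives $\Pi = R(\Theta\Hermitian\Omega)\inv$, so that
\[
\randsvd{G} = \Theta\Pi\Theta\Hermitian = \Theta R(\Theta\Hermitian\Omega)\inv\Theta\Hermitian = (G\Omega)(\Theta\Hermitian\Omega)\inv\Theta\Hermitian .
\]

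Next I would expand the Nyström approximation in the same coordinates. Using $G\Omega = \Theta R$ and $\Omega\Hermitian G\Omega = \Omega\Hermitian\Theta R$ — both invertible under the full-rank assumption, so that the pseudoinverse in the Nyström formula is an ordinary inverse — one gets
\[
\Nystrom{G} = (G\Omega)(\Omega\Hermitian G\Omega)\inv(G\Omega)\Hermitian = \Theta R(\Omega\Hermitian\Theta R)\inv R\Hermitian\Theta\Hermitian = \Theta(\Omega\Hermitian\Theta)\inv R\Hermitian\Theta\Hermitian = \Theta(\Omega\Hermitian\Theta)\inv(G\Omega)\Hermitian .
\]
Comparing the two displays and using $(\Theta\Hermitian\Omega)\invHermitian = (\Omega\Hermitian\Theta)\inv$, one reads off directly that $\randsvd{G}\Hermitian = \Theta(\Omega\Hermitian\Theta)\inv(G\Omega)\Hermitian = \Nystrom{G}$. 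Since $G$ is Hermitian, $\Omega\Hermitian G\Omega$ is Hermitian, so its pseudoinverse is Hermitian and hence $\Nystrom{G}$ is Hermitian; therefore $\randsvd{G} = \randsvd{G}\Hermitian = \Nystrom{G}$, which is the claim. As a by-product this recovers the remark preceding the lemma: the direct inversion producing $\Pi$ yields a symmetric $\randsvd{G}$ in exact arithmetic, precisely because that matrix equals the manifestly Hermitian $\Nystrom{G}$.

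The computation itself is routine; the only point needing care is the standing nondegeneracy hypothesis. I would state explicitly that $G\Omega$ is assumed to have full column rank $r$ — equivalently that $R$ and $\Theta\Hermitian\Omega$ are invertible — which is exactly what makes $\range\Theta = \range(G\Omega)$ and $(\Omega\Hermitian G\Omega)^+ = (\Omega\Hermitian G\Omega)\inv$, and which is the same ``with high probability'' event already used for the single-view construction. Dropping it would force pseudoinverses throughout and only $\range\Theta\subseteq\range(G\Omega)$; I would confine any such discussion to a remark rather than pursue it here, since the single-view method is itself only defined in the nondegenerate regime.
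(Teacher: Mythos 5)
Your proposal is correct and follows essentially the same route as the paper: a direct calculation from the QR factorisation $G\Omega = \Theta R$, using invertibility of $R$ and $\Theta\Hermitian\Omega$ to collapse $\Theta\Pi\Theta\Hermitian$ into $(G\Omega)(\Omega\Hermitian G\Omega)\inv(G\Omega)\Hermitian$. The only cosmetic difference is that you invoke the Hermitian symmetry of $G$ at the end (via $\randsvd{G}\Hermitian = \Nystrom{G}$ and Hermitian symmetry of the Nystr\"om approximation), whereas the paper uses $G = G\Hermitian$ mid-computation to rewrite $(\Theta\Hermitian\Omega)\inv$ as $(\Omega\Hermitian G\Omega)\inv R\Hermitian$; both steps are valid and the proofs are otherwise identical.
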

\begin{proof}
This follows from direct calculation. Note that $\Theta = G\Omega R\inv$, so expanding
$\randsvd{G}$,
\begin{align*}
\randsvd{G}
& = \Theta \Pi \Theta\Hermitian\\
& = \Theta \Theta\Hermitian G \Omega (\Theta\Hermitian \Omega)\inv \Theta\Hermitian\\
& = \Theta \Theta\Hermitian G \Omega (R\invHermitian\Omega\Hermitian G\Omega)\inv \Theta\Hermitian\\
& = \Theta \Theta\Hermitian G \Omega (\Omega\Hermitian G\Omega)\inv R\Hermitian\Theta\Hermitian\\
& = G \Omega (\Omega\Hermitian G\Omega)\inv (G\Omega)\Hermitian,
\end{align*}
where the last equality holds since $\range G\Omega = \range \Theta$. This can also
be seen since $R =\Theta\Hermitian G \Omega$.
\end{proof}
%We evaluate these approximations numerically in section \ref{sec:numerical_results}.
\section{Numerical Experiments}\label{sec:numerical_results}

In Section \ref{sec:synthetic_data} we present results from several numerical
experiments using synthetic data to develop an understanding of the many
different preconditioners studied above. Next, Section \ref{sec:effect_low_rank} 
attempts to explain these differences by looking at the Bregman log determinant
divergence. Finally, Section \ref{sec:numerical_results:var} applies the proposed 
methodology to a problem in variational data assimilation.

%Cite \cite{saad2003iterative} for $M_{SGS}$ eq. (10.9).

\subsection{Synthetic Data}\label{sec:synthetic_data}

\subsubsection{Experimental Setup}

We construct $S = A+B$, where $A\in\Snpp$ and $B\in\Snp$ of rank $m < n$,
for use in our numerical experiments. 
%Given an eigendecomposition 
%$P \Lambda P\Hermitian$ of $A$, we have $S = P(\Lambda + P\Hermitian B P)P\Hermitian$,
%where $P$ is a unitary matrix.
%Since $B\mapsto P\Hermitian B P$ is just a change of basis, we can, thanks to
%the properties of the Bregman divergence, limit our consideration to 
%matrices $S = A +  B$, where $A$ is a diagonal matrix. In this case, solving
%$Sx = b$ is then equivalent to solving $(\Lambda + P\Hermitian B P)y = P\Hermitian b$,
%where $y = P\Hermitian x$. For our experiments, we therefore use the canonical basis
%for $P$.
We vary the spectrum of $A$ and $B$ according to
\begin{subequations}\label{eq:spectrum}
\begin{align}
& \lambda_A(i) = \exp\{-(\alpha_A i/n  - c_A)^{\beta_A}\} + \kappa,\label{eq:spectrum_A}\\
& \lambda_B(i) = \exp\{-(\alpha_B i/m - c_B)^{\beta_B}\}.\label{eq:spectrum_B}
\end{align}
\end{subequations}
for positive parameters $\alpha_A$, $\alpha_B$, $\beta_A$, $\beta_B$, $c_A$, $c_B$, and $\kappa$ controlling the decay of the eigenvalues as a function of $i$. The parameter 
$\kappa$ is there to bound the eigenvalues of $A$ away from zero when $i$ approaches $n$. 
We set $n=1000$, $m = 600$, and $r=300$ in our experiments. The parameters used for $A$ and $B$ in \eqref{eq:spectrum} are shown in Table \ref{table:AB_params} and a graphs
of the spectra are shown in Figure \ref{fig:AB_spectra}. Selecting $\Sigma_A\in\Rnn$ and 
$\Sigma_B\in\Rmm$ as above, we then construct
\begin{subequations}\label{eq:A_B_simple}
\begin{align}
& A = O_A \Sigma_A O_A\Hermitian,\\
& B  = O_B \Sigma_B O_B\Hermitian,
\end{align}
\end{subequations}
where the columns of $O_A\in \Rnn$, $O_B\in \Rnm$ are computed from QR 
decompositions of matrices whose entries are drawn from the standard 
Gaussian distribution.

\begin{table}[tbhp]
\footnotesize
    \caption{Parameters used to generate eigenvalues of $A$ and $B$. Visualisations of these spectra can be seen in Figure \ref{fig:AB_spectra}.}\label{table:AB_params}
    \centering
\begin{tabular}{|c|l|l|l|c|l|c|}\hline
\textbf{Matrix} & $\alpha$ & $c$ & $\beta$ & $\kappa$ & \textbf{Description} & \textbf{Label}\\\hline
\multirow{4}{*}{$A$} & $0$ & $0$ & $0$ & $0.70$ & flat & 1 \\
    & $3.5$ & $0$ & $1.0$ & $0.05$ & exponential & 2\\
    & $4.0$ & $0.30$ & $4.5$ & $0.05$ & drop-off and fast decay & 3\\
    & $2.0$ & $0.25$ & $4.5$ & $0.05$ & drop-off at $n/2$ & 4\\\hline
\multirow{2}{*}{$B$} & $3.0$ & $0$ & $1.0$ & - & exponential & 1\\
    & $2.5$ & $0.55$ & $4.7$ & - & slow decay & 2\\ \hline
\end{tabular}
\end{table}
\begin{figure}
    \centering
    \includegraphics[scale=0.4]{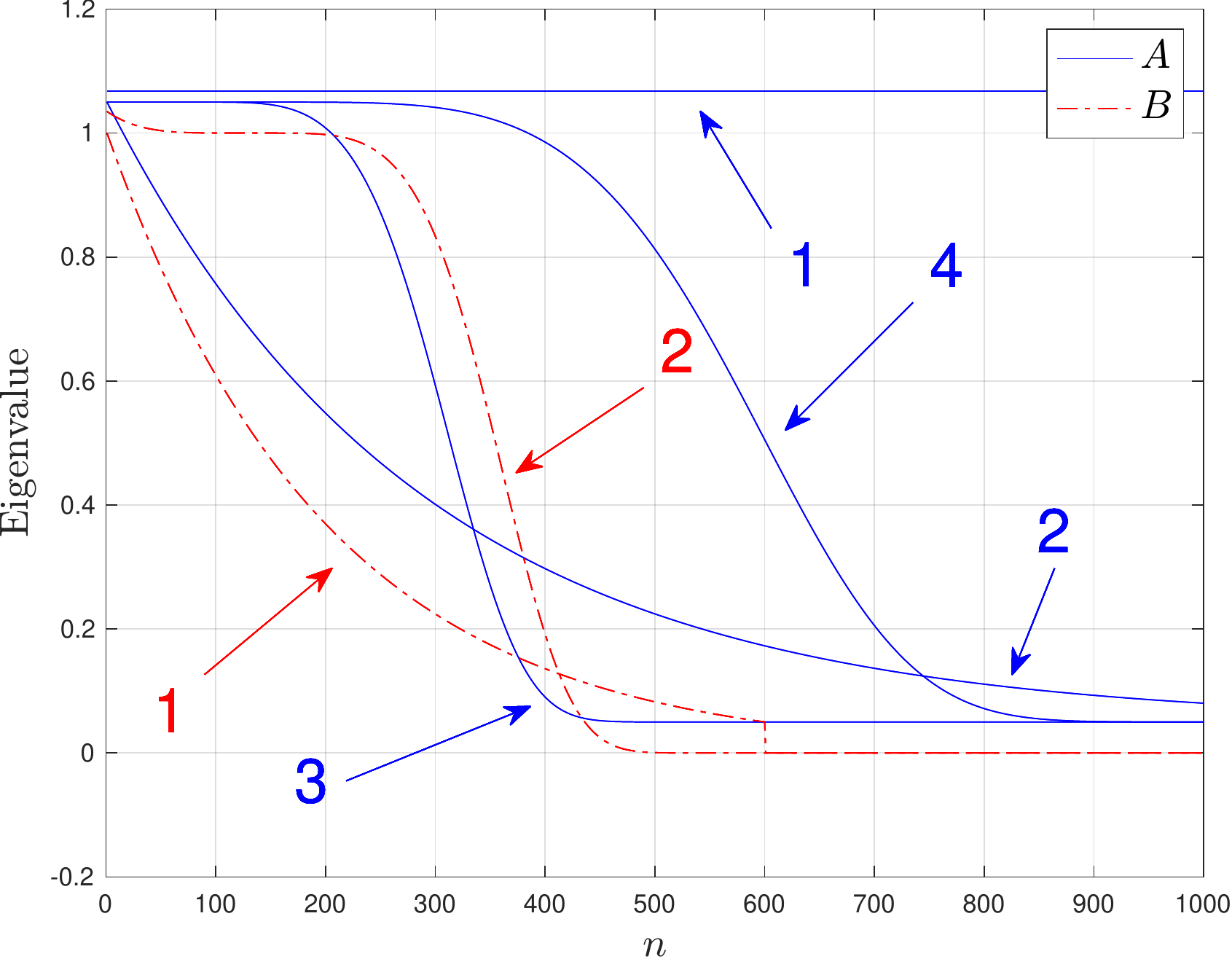}
    \caption{Different choices of spectra for $A$ and $B$ using the parameters in Table \ref{table:AB_params}.}
    \label{fig:AB_spectra}
\end{figure}

Next, we evaluate and compare the preconditioners in Table \ref{table:all_pcs}
and \ref{table:algebraic} for each
combination of $A$ and $B$. We do not use any oversampling (i.e. $p=0$) to develop
an understanding of the preconditioners in the simplest setting possible for relatively
easy problems. In our randomised power range approximations $\scaledrandpower{S}$ and 
$\nonscaledrandpower{S}$, we use $q=2$. We compare against several algebraic 
preconditioners. Since $S$ in our setup is dense we do not use an incomplete Cholesky
preconditioner. Instead, we shall use a \emph{partial} Cholesky preconditioner (see e.g.
\cite{bellavia2013matrix}). Consider a formal partition of $S$:
\[
S = \begin{bmatrix} S_{11} & S_{21}\Hermitian \\ S_{21} & S_{22}\end{bmatrix}.
\]
Let $\begin{bmatrix} F_{11}\\ F_{21}\end{bmatrix}\in\Cnr$ denote the first $r$ columns
of a Cholesky factor of $S$ using diagonal pivoting (for simplicity we ignore permutations).
We let $D_\textnormal{Schur}$ denote the diagonal of whose elements are given by the Schur 
complement
\[
S_{22} - S_{21} S_{11}\inv S_{21}\Hermitian
\]
and form the preconditioner
\[
P_\texttt{pchol} = \begin{bmatrix} F_{11} & \\ F_{21} & I_{n-r}\end{bmatrix}
\begin{bmatrix}I_r & \\  & D_\textnormal{Schur}\end{bmatrix}
\begin{bmatrix} F_{11} & F_{21}\Hermitian\\ & I_{n-r}\end{bmatrix}.
\]
Hence, this can be considered a "positive definite plus low rank" preconditioner.
% SGS
Another algebraic preconditioner is the symmetric Gauss-Seidel preconditioner\footnote{Also
called the \emph{Symmetric Successive Over-Relaxation} (SGS) preconditioner with weight equal
to one.} \cite{scott2023algorithms} based on the decomposition
\[
S = L + U + D
\]
into the strictly lower triangular, upper triangular and diagonal elements:
\[
P_\textnormal{SGS} = (D + L)D\inv (D + U).
\]
This preconditioner requires access to the entire matrix, whereas $\scaled{S}$ (and
the variations thereof) require only the factors of $A$. Here we analyse both 
strategies for synthetic data, but the practicality of either strategy may be
entirely problem-dependent. The Jacobi preconditioner is simply the matrix
consisting of the diagonal entries of $S$.
The Python package \texttt{scaled\_preconditioners}\footnote{Available at 
\url{https://github.com/andreasbock/scaled_preconditioners}.} contains implementations 
of the preconditioners listed in the ``Scaled'' column of Table~\ref{table:all_pcs}.
We have used MATLAB R2022b \cite{MATLAB} to generate the results shown in this paper.

\begin{table}[tbhp]
\caption{The preconditioners based on the structure $S=A+B$.}\label{table:all_pcs}
\centering
\begin{tabular}{|l|l|l|}\hline
        \textbf{Approximation} & \textbf{Scaled} & \textbf{No scaling}\\\hline
        Truncated SVD & $\scaled{S}$ (eq. \eqref{eq:S_preconditioners:scaled}) & $\nonscaled{S}$ (eq. \eqref{eq:S_preconditioners:nonscaled})\\
        Randomised SVD & $\scaledrand{S}$ (eq. \eqref{eq:widehatS_practical}) & $\nonscaledrand{S} := A + \scaledrand{B})$\\
        Rand. power range SVD & $\scaledrandpower{S}$ (eq. \eqref{eq:scaledrandpower}) & $\nonscaledrandpower{S} := A + \scaledrandpower{B}$\\
        Nystr\"om & $\scaledNys{S}$ (eq. \eqref{eq:scaledNys}) & $\nonscaledNys{S}$ (eq. \eqref{eq:nonscaledNys}\\ \hline
    \end{tabular}
\end{table}

\begin{table}[tbhp]
\caption{The algebraic preconditioners used in our numerical experiments.}\label{table:algebraic}
\centering
        \begin{tabular}{|l|}\hline
        \textbf{Algebraic}\\\hline
        Partial Cholesky\\
        Symmetric Gauss-Seidel\\
        Jacobi\\\hline
    \end{tabular}
\end{table}

\subsubsection{Performance of Iterative Methods}

For each combination of the spectra of $A$ and $B$ depicted in Figure
\ref{fig:AB_spectra} we generate 25 random instances of $O_A$ and $O_B$ in \eqref{eq:A_B_simple} to construct the system matrix $S=A+B$.
For each of the preconditioners $\mathcal{P}$ in Table \ref{table:all_pcs} and
\ref{table:algebraic}
we study the value of the Bregman divergence $\BregmanLogDet(\mathcal{P}, S)$,
and the convergence behaviour of the PCG for the solution of $Sx=b$ with a
relative tolerance of $10^{-7}$.
Overall, we observed negligible variance in the 
results as a function of $O$ and $b$. We therefore only present results for one
instance of bases for $A$ and $B$.\\

{
\setlength{\tabcolsep}{8.15pt}
\begin{table}[h!]%tbhp
\tiny
\caption{PCG iteration counts for different instances of $A$ and $B$ and choice of preconditioner. We highlight the smallest iteration count across the preconditioners
in bold.}\label{table:synthetic_data:iteration}
%\begin{tabular*}{\textwidth}{@{\extracolsep{\fill}} *{20}{c} }
\begin{tabular*}{\textwidth}{@{} *{20}{c} }
\toprule 
 \multicolumn{2}{c}{Label} & \multirow{2.2}{*}{$\kappa_2(S)$} & \multicolumn{12}{c}{Iteration count}\\ \cmidrule(l{0pt}r{2pt}){1-2} \cmidrule(l{2pt}r{0pt}){4-15}
$A$ & $B$ & & $I$ & $P_\texttt{pchol}$ & $P_\textnormal{SGS}$  & Jacobi & $\nonscaled{S}$ & $\nonscaledrand{S}$ & $\nonscaledrandpower{S}$ & $\nonscaledNys{S}$ & $\scaled{S}$ & $\scaledrand{S}$ & $\scaledrandpower{S}$ & $\scaledNys{S}$ \\
\midrule
\csvreader[
head to column names,
]{results_N=1000_r=300_tex.csv}{}{\A & \B & \condS & \iternopc & \iterpchol & \itergaussseidel & \iterjacobi & \iternonscaled & \iternonscaledsr & \iternonscaledsrq & \iternonscalednys & \iterscaled & \iterscaledsr & \iterscaledsrq & \iterscalednys\\}
\\[-\normalbaselineskip]\hline
\end{tabular*}
\end{table}
}

{
\setlength{\tabcolsep}{1.1pt}
\begin{table}[h!]
\tiny
\caption{Bregman divergence values between the different preconditioners and choice of system matrix $S$ for different instances of $A$ and $B$. We highlight the smallest values across the preconditioners in bold.}\label{table:synthetic_data:divg}
\begin{tabular*}{\textwidth}{@{\extracolsep{\fill}} *{20}{c} }
\toprule 
 \multicolumn{2}{c}{Label} & \multirow{2.2}{*}{$\kappa_2(S)$} & \multicolumn{12}{c}{Bregman divergence $\BregmanLogDet(\cdot, S)$}\\ \cmidrule(l{0pt}r{2pt}){1-2} \cmidrule(l{2pt}r{0pt}){4-15}
$A$ & $B$ & & $I$ & $P_\texttt{pchol}$ & $P_\textnormal{SGS}$  & Jacobi & $\nonscaled{S}$ & $\nonscaledrand{S}$ & $\nonscaledrandpower{S}$ & $\nonscaledNys{S}$ & $\scaled{S}$ & $\scaledrand{S}$ & $\scaledrandpower{S}$ & $\scaledNys{S}$ \\
\midrule
\csvreader[
head to column names,
]{results_N=1000_r=300_tex.csv}{}{\A & \B & \condS & \divgnopc & \divgpchol & \divggaussseidel & \divgjacobi & \divgnonscaled & \divgnonscaledsr & \divgnonscaledsrq & \divgnonscalednys & \divgscaled & \divgscaledsr & \divgscaledsrq & \divgscalednys\\}
\\[-\normalbaselineskip]\hline
\end{tabular*}
\end{table}
}

The results are shown in Table \ref{table:synthetic_data:iteration} and
\ref{table:synthetic_data:divg}. First we comment on the algebraic preconditioners.
The simplest preconditioner, Jacobi, performs no better than unpreconditioned runs.
The symmetric Gauss-Seidel preconditioner performs well overall, and for the 
case of constant diagonal $A$, PCG requires the fewest iterations with this 
preconditioner, and is the closest to $S$ in terms of Bregman divergence.
In all other cases, $\scaled{S}$ performs better.
Overall, the partial Cholesky preconditioner reduces the number of iterations
required when compared to an unpreconditioned run, but not as effectively as the
scaled preconditioners. As mentioned, the partial Cholesky preconditioner can be
thought of as a diagonal plus low-rank matrix. 
The invariance property of the Bregman divergence was key to the theoretical
results given in Section \ref{sec:precond_bregman}.
By Theorem \ref{thm:whSS_minimiser}, $\scaled{S}$ is optimal in the Bregman 
divergence and $\scaled{S}\inv S$ has $\rank (B) -r + 1 $ distinct eigenvalues.
In general, $P_\texttt{pchol}\inv S$ will have more distinct eigenvalues. \\

Next we comment on the preconditioners in Table \ref{table:all_pcs}.
% 1. Flat spectrum
When the spectrum of $A$ is flat, Corollary \ref{cor:scaled_equals_nonscaled} 
tells us the preconditioners $\scaled{S}$ and $\nonscaled{S}$ (and their respective 
variants) are identical. This is confirmed by the results seen in the top two
rows of Table \ref{table:synthetic_data:iteration}. As expected, most of the
preconditioners perform similarly here as the problem is well-conditioned.
In this case, the randomised 
power range preconditioners are almost as good as $\scaled{S}=\nonscaled{S}$ in 
terms of PCG convergence, with the results for the Nystr\"om approximation being
slightly worse. The standard randomised SVD can be seen be the worst choice in all
cases. Analysing the Bregman divergence values in Table \ref{table:synthetic_data:divg}
leads to the same conclusion.\\

% 2. Simple exponential decay
The rows of Table \ref{table:synthetic_data:iteration} and \ref{table:synthetic_data:divg} where label $A=2$ show the results for the simple exponential decay of
the eigenvalues of $A$ where we observe that by using the preconditioner $\scaled{S}$,
PCG requires fewer iterations to achieve convergence for both instances of $B$, with the
randomised power range preconditioner $\scaledrand{S}^q$ almost achieving the same
convergence. The results here are aligned with Theorem \ref{thm:whSS_minimiser} which 
suggests that finding the correct basis for a preconditioner as well as matching the 
eigenvalues of $S$ has an impact on PCG convergence. Moreover, the Bregman divergence values in Table \ref{table:synthetic_data:divg} appear to be a useful indicator of
the PCG performance of the preconditioner, i.e., the smaller the divergence, the better
the preconditioner performs.
The inverse of $S$ is required to compute the Bregman log determinant divergence, so 
in practice the information in Table \ref{table:synthetic_data:divg} is not available.
To the best of our knowledge this divergence has not before been used to analyse 
preconditioners, so it is worthwhile developing an insight by studying smaller problems
to support the theoretical results obtained above.
For the labels $A=2$, $B=1$ means that the spectrum of $B$ decays 
exponentially while for $B=2$ it is mostly flat (cf. Figure \ref{fig:AB_spectra}).
For $B=1$ we see that $\nonscaled{S}$ (and $\nonscaledrandpower{S}$, by a small margin) 
leads to faster PCG convergence than $\scaledNys{S}$, while the opposite is true when 
$B=2$. In the latter case, the spectrum of $G$ is more easily
captured with an arbitrary sketching matrix, whereby the advantage of approximating
$G$ is more profound, suggesting perhaps an effect similar to what was observed
for the diagonal example in Section \ref{sec:spectral_analysis}.
As before, we see that the Bregman divergence values correlate with
PCG performance. Interestingly, while $\scaledrand{S}$ does not appear to be a good 
choice, our results for $\scaledrandpower{S}$  show that a few power iterations can be used to great effect.
We study the effect of sketching in more detail in Section \ref{sec:effect_low_rank}.\\

% 3. Rapid decays
It is instructive to look at rows where label $A=3$ and $A=4$ together,
where the decay of $A$ is rapid but occurs at different points in the spectrum.
By comparing these four cases of $A$ and 
$B$, we can develop some insight into how the sketching with $\Omega$ without
any power iterations affects the results and how the randomised SVD approximations
compare to the Nystr\"om analogues. 
As before, $\scaled{S}$ and $\scaledrandpower{S}$ are superior choices whereas
$\scaledrand{S}$, $\nonscaledrand{S}$ and $\nonscaledNys{S}$ lead to the slowest 
convergence. Here, the main difference in the results is the PCG convergence using
$\scaledNys{S}$, $\nonscaled{S}$, and $\nonscaledrandpower{S}$. When the
spectrum of $A$ and $B$ is mostly flat ($A=3$, $B=2$), PCG performs best
using $\scaledNys{S}$
as a preconditioner. Since we expect the approximation error owing to randomisation
to be less pronounced, we see the benefits of the Nystr\"om approximation over the
randomised SVD preconditioner. In other words, the approximation of the $G$ matrix
is less dependent on the realisation of the sketching matrix. A similar but less 
pronounced effect is seen when $A=3$, $B=2$. This is reflected in the Bregman 
divergence, where $\scaledNys{S}$ are closer to $S$ in this nearness
measure than $\nonscaled{S}$ or its variations.\\

Overall, these results suggest that while $\scaled{S}$ is at least as good a preconditioner
as $\nonscaled{S}$ (and in all nontrivial cases, better), practical randomised numerical approximations thereof
must be carried out with great care since we see that the $\scaledrand{S}$ and $\scaledNys{S}$ can be outperformed by $\nonscaled{S}$ depending on the spectrum
of $A$ and $B$. In general, it appears that $\scaledrandpower{S}$
is the best choice when the computation of $\scaled{S}$ is not an option. 
The comparison with algebraic preconditioners showed that when the structure of
$S$ is known (i.e., $A$ is factorisable and we can make products with $B$), 
better preconditioners may be obtained by using the proposed framework. Furthermore,
the preconditioners $P_\texttt{pchol}$ and $P_\textnormal{SGS}$ require access
to the matrix $S$, which may not always be possible or convenient depending on the
application at hand. In fact, the factor $Q$ is sufficient in order to construct
a "Scaled" preconditioner, since we assume we can make products with the remainder
$S - QQ\Hermitian = B$, which lends itself better to cases where $S$ is a 
\emph{black box}. Overall, the Bregman divergence appears to be a useful tool in
suggesting how a preconditioner can perform in practice. A result such as Theorem 
\ref{thm:whSS_minimiser} supports this analysis.

\subsection{Effect of the Low Rank Approximation on the Bregman Divergence}\label{sec:effect_low_rank}

To reinforce the intuition about the Bregman divergence we conclude by
examining more closely how its terms are affected by the different low-rank
approximations described previously. For simplicity, we now only consider the
spectra given by $A=4$ and $B=1$ and set $n=100$, $m = 60$, and $r=30$. Recall
the definition in \eqref{eq:divergence}, where $X=U\diag(\lambda) U\Hermitian$
and $Y=V\diag(\theta) V\Hermitian$:
\[
\BregmanLogDet(X, Y) = \sum_{i=1}^n \sum_{j=1}^n (v_i\Hermitian u_j)^2 \Big(\frac{\lambda_i}{\theta_j} - \log\frac{\lambda_i}{\theta_j} - 1\Big).
\]

Figure \ref{fig:heatmaps_nonscaled} and \ref{fig:heatmaps_scaled} show
the terms of the Bregman divergence $\BregmanLogDet(\mathcal{P}, S)$
for each approximation $\mathcal{P}$ of $S$. The right column
in these figures show that there is little difference between the terms
$\frac{\lambda_i}{\theta_j} - \log\frac{\lambda_i}{\theta_j} - 1$ in 
the definition of the divergence. Looking at the left columns of these
figures suggests that the main contributing factor to the observed difference
in Bregman divergence values in Table \ref{table:synthetic_data:divg} lies in the
approximation of the basis for $S$ expressed by the term $(v_i\Hermitian u_j)^2$
in the expression above. In fact, we see that $\scaled{S}$ matches
the first eigenvectors of $S$ perfectly by definition, and that the first
$r$ eigenvectors of both  $\scaledrandpower{S}$ and $\scaledrand{S}$ are
almost collinear with those of $S$. This is not quite the case for
$\scaledNys{S}$, where we see the effect of the Nystr\"om approximation.
For the nonscaled preconditioners, we see in the
left column that these approximations do not capture the basis in which
the eigenvalues of $S$ are expressed, which is aligned with the PCG
results seen in the rows $A=4$ and $B=1$ of Table
\ref{table:synthetic_data:iteration}.

\begin{figure}[h!]
    \centering
    \includegraphics[scale=0.70]{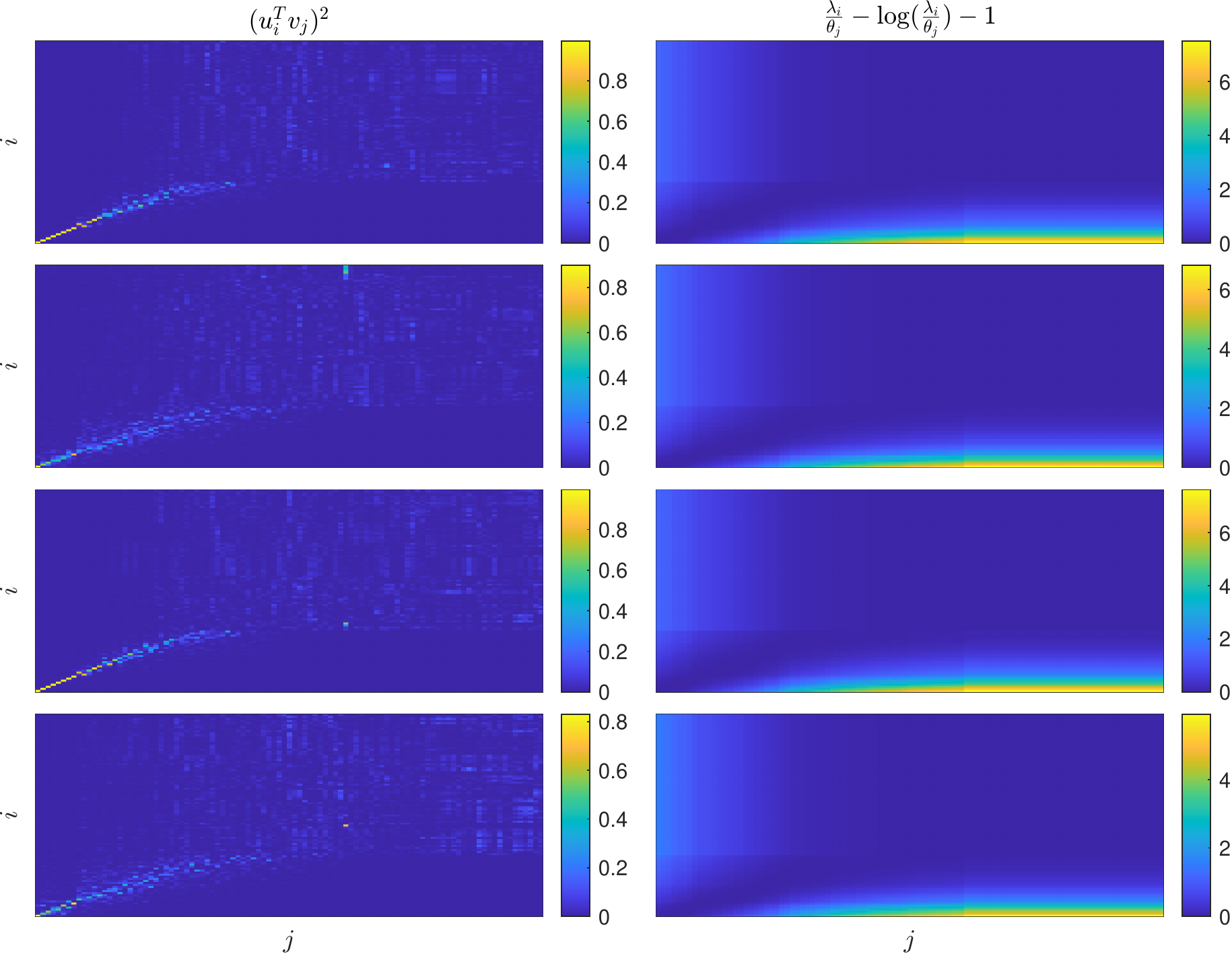}
    \caption{Bregman divergence terms for the nonscaled preconditioners. Top to bottom: $\nonscaled{S}$, $\nonscaledrand{S}$, $\nonscaledrandpower{S}$, $\nonscaledNys{S}$.}
    \label{fig:heatmaps_nonscaled}
\end{figure}

\begin{figure}[h!]
    \centering
    \includegraphics[scale=0.70]{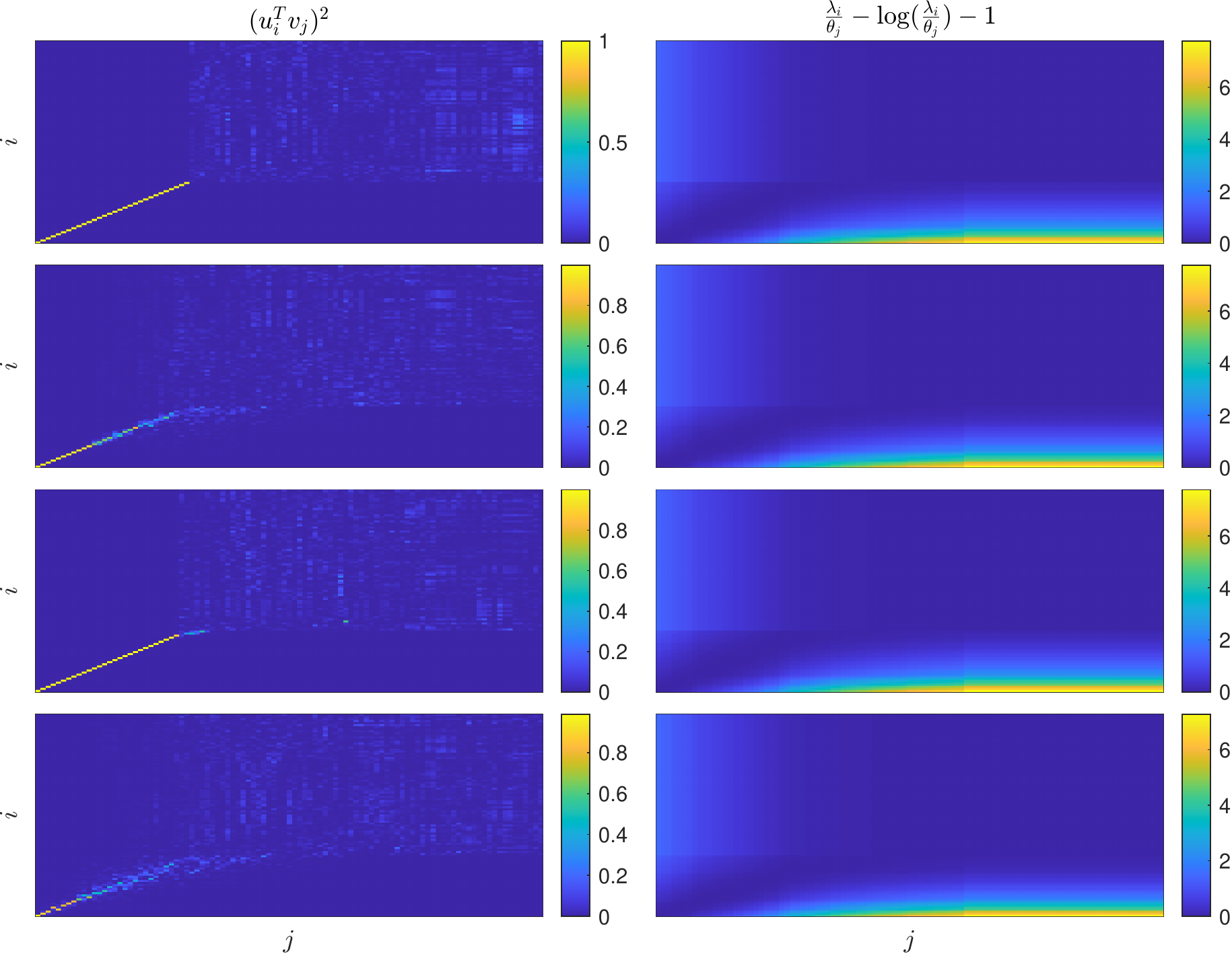}
    \caption{Bregman divergence terms for the scaled preconditioners. Top to bottom: $\scaled{S}$, $\scaledrand{S}$, $\scaledrandpower{S}$, $\scaledNys{S}$.}
    \label{fig:heatmaps_scaled}
\end{figure}

\subsection{Application to Variational Data Assimilation}\label{sec:numerical_results:var}

Data assimilation is concerned with making predictions about the future
state of a system provided noisy observations of its state along with prior
knowledge given in the form of a model. Such predictions are only useful if
they can be produced within a suitable timeframe, so efficient and scalable
methods are important for practical forecasting \cite{freitag2020numerical}. 
The objective is to assimilate $N+1$ number of state variables $x_i\in \rn$ 
at discrete times $t_0$, $\ldots$, $t_N$ given $N$ noisy observations $y_i
\in \Rm$ at times $t_1$, $\ldots$, $t_N$, so that they approximately fit a 
forward propagation model while fitting with the observations. This implies
the recurrence $x_i = \mathcal{M}_i(x_{i-1})$, $i=1,\ldots, N$, where 
$\mathcal{M}_i: \rn \rightarrow \rn$ is possibly nonlinear, subject to a
baseline criterion $x_0 = x_0^B + \xi_0$, where $\xi_0 \in \Normal(0,B)$, 
$B\in\Rnn$, for a known $x_0^B$. Therefore, $x_i$ satisfies $y_i = 
\mathcal{H}_i x_i + \epsilon_i$, where $\epsilon_i\in\Normal(0,R_i)$,
$R_i\in\Rmm$ and $\mathcal{H}_i: \rn \rightarrow \Rm$ is an (also possibly
nonlinear) observation operator. The 4D\footnote{4D since we combine 3D 
spatial information distributed in time.} variational data
assimilation problem (4D-VAR) seeks to minimise the following functional:
\begin{equation}\label{eq:4dvar}
J(x_0) = \frac12 (x_0 - x_0^B)\transp B^{-1} (x_0 - x_0^B)
+ \frac12 \sum_{i=0}^{N} (y_i - \mathcal{H}_i(x_i))\transp R^{-1} (y_i - \mathcal{H}_i(x_i)).
\end{equation}
It is commonly acknowledged that the forecasting model $\mathcal{M}_i$ contains small
errors or noise for which we do not want to overfit. We therefore 
replace the exact recurrence by $x_i = \mathcal{M}_i (x_{i-1}) + \xi_i$,
$i=1,\ldots, N$ for $\xi_i\in\Normal(0,Q_i)$, $Q_i\in\Rnn$. This motivates
the following \emph{weak constraint} formulation \cite{fisher2012weak}
 of \eqref{eq:4dvar}:
\begin{align}
J(x_0) & = \frac12 (x_0 - x_0^B)\transp B^{-1} (x_0 - x_0^B)
+ \frac12 \sum_{i=0}^{N} (y_i - \mathcal{H}_i(x_i))\transp R_i^{-1} (y_i - \mathcal{H}_i (x_i))\nonumber\\
& + \frac12 \sum_{i=1}^{N} (x_i - \mathcal{M}_i(x_{i-1}))\transp Q_i\inv (x_i - \mathcal{M}_i(x_{i-1})).\label{eq:4dvar:weak_constraint}
\end{align}
This problem poses a challenge for computational scientists, as the size
 of many relevant assimilation problems posed as \eqref{eq:4dvar:weak_constraint}
is very large.
To solve \eqref{eq:4dvar:weak_constraint} we take an incremental approach
and replace $J$ by a Gauss-Newton approximation \cite{courtier1994strategy}.
This leaves a quadratic functional $\tilde J^l$ to be minimised for each iterate
$\delta x^l = x^{l+1} - x^l$, where $l$ is the Gauss-Newton index:
\begin{align}
\tilde J^l(\delta x^l) & = (\delta x_0^l - b_0^l)\transp B^{-1} (\delta x_0^l - b_0^l)
+ \frac12 \sum_{i=0}^{N} (H^l_i \delta x_i^l - d_i^l)\transp R_i^{-1} (H_i \delta x_i^l - d_i^l)\nonumber\\
& + \frac12 \sum_{i=1}^N (M^l_i \delta x_{i-1}^l - \delta x_i^l - \eta_i^l)\transp Q_i\inv (M^l_i \delta x_{i-1}^l - \delta x_i^l - \eta_i^l),\label{eq:4dvar:weak_constraint:approx}
\end{align}
where we have used the substitutions $b_0^l = x_0^l - x_0^B$,
$d_i^l = y_i - \mathcal{H}_i(x_i^l)$, $\eta_i^l 
=x_i^l-\mathcal{M}_i(x_{i-1}^l)$ and where $H^l_i$, $M^l_i$ are the 
tangent linear approximations at iterate $l$ of $\mathcal{H}_i$, 
$\mathcal{M}_i$, respectively. We rewrite \eqref{eq:4dvar:weak_constraint:approx}
in a more compact way in \eqref{eq:4dvar:weak_constraint:approx:compact} where
we also drop the Gauss-Newton index:
\begin{align}
\tilde J(\bdx) & = (\bL\bdx-\bb)\transp\bD\inv(\bL\bdx-\bb) + (\bH\bdx-\bd)\transp\bR\inv(\bH\bdx-\bd),\label{eq:4dvar:weak_constraint:approx:compact}
\end{align}
by using the substitutions:
\begin{subequations}\label{eq:boldmatrices}
\begin{align}
& \bD= \begin{bmatrix}
B & & &\\
& Q_1 &&\\
& & \ddots \\
&  & & Q_N
\end{bmatrix},\;
\bL=\begin{bmatrix}
I & & \\
-M_1 & I &\\
& \ddots & \\
& -M_N & I
\end{bmatrix},\\
& \bR= \begin{bmatrix}
R_0 & & \\
& \ddots & \\
&  & R_N
\end{bmatrix},\;
\bH=\begin{bmatrix}
H_0 & & \\
& \ddots & \\
&  & H_N
\end{bmatrix}.
\end{align}
\end{subequations}
and:
\begin{align*}
\bb = \begin{bmatrix} b_0, \eta_1,\ldots,\eta_N \end{bmatrix}\transp,\quad \bd
= \begin{bmatrix} d_0, \ldots,d_N \end{bmatrix}\transp.
\end{align*}
Note that for $s=n(N+1)$ and $p=m(N+1)$, $\bD\textnormal{,}\,\bL\in\mathbb{R}^{s\times s}$,
$\bR\in\mathbb{R}^{p\times p}$,
$\bH\in\mathbb{R}^{p\times s}$,  $\bb\in\mathbb{R}^{s}$,
$\bd\in\mathbb{R}^p$ and we use $I$ to denote the identity matrix the
dimension of which can be inferred from the context.
Solving for the increment $\bdx$ as in
\eqref{eq:4dvar:weak_constraint:approx:compact} in called the \emph{state
formulation} (see \cite{fisher2017parallelization} for other approaches),
where the matrix that we wish to invert is given by the Hessian of $\tilde J$:
\begin{equation}\label{eq:Schur}
\bS = \bL\transp \bD\inv \bL + \bH\transp \bR\inv \bH.
\end{equation}
In practice, the $s\times s$ system matrix in \eqref{eq:Schur} can be very 
large and a good preconditioner is critical due to the small computational
budget at operational scale relative to the problem size.
In the next section we review a few key existing preconditioners for 
\eqref{eq:Schur} and describe our proposed  preconditioner based on the 
Bregman divergence framework.

\subsubsection{Preconditioning the State Formulation of 4D VAR}

% LITERATURE ON PRECONDITIONERS
Several preconditioners for \eqref{eq:Schur} have therefore been proposed.
Among these is the preconditioner
\begin{equation}\label{eq:pc:LDL}
\bL\transp \bD\inv \bL,
\end{equation}
see e.g. \cite{gratton2018guaranteeing}, which is simply the system matrix in 
\eqref{eq:Schur} without the observation term. Approximations of
$\bL\inv$ have been used in the inverse of \eqref{eq:pc:LDL} and studied in
\cite{el2015conditioning}. In this section we shall not use any approximations
of $\bL$ as the objective is to demonstrate how \eqref{eq:pc:LDL} compares
to other preconditioners that incorporate observation information in different
ways. The preconditioner \eqref{eq:pc:LDL} can be updated with a low-rank term
\cite{tabeartstein} leading to a preconditioner of the following form:
\begin{equation}\label{eq:tildeS_0}
\tilde\bS = \bL\transp\bDi\bL + \bK_r\transp \bK_r,
\end{equation}
where $\bK_r\transp \bK_r$ is a rank $r<n$ truncated SVD of $\bH\transp \bR\inv\bH$.\\
% Here we propose a more principled approach, where we 
%approximate the matrix $\bG$ by a low-rank matrix, along with introducing
%an approximation of $\bL$ (and $\bD$ and $\bR$) to allow for a computationally
%tractable preconditioner for operationally  relevant problem sizes.\\ 

%Another option examined in the literature
We note that $\bS$ can be written as
\begin{align*}
\bS & = \bL\transp(\bD\inv + \bL\invtransp \bH\transp \bR\inv \bH \bL\inv)\bL\nonumber\\
& = \bL\transp \bD\invhalf (I + \bDh \bL\invtransp \bH\transp \bR\inv \bH \bL\inv \bDh)\bD\invhalf \bL\nonumber\\
& = \bQ (I + \bG) \bQ\transp,
\end{align*}
where $\bQ = \bL\transp \bD\invhalf$, $\bG = \bQ\inv \bH\transp \bR\inv \bH\bQ\invtransp$.
This leads to the following preconditioner which fits into the the framework presented in 
Section \ref{sec:precond_bregman}:
\begin{equation}\label{eq:var:scaled}
\hat\bS_r = \bQ (I + \bG_r) \bQ\transp,
\end{equation}
where $\bG_r$ is a rank $r$ truncation of  $\bG$. This is also referred to
as a \emph{limited memory preconditioner} in the literature 
\cite{fisher2018low,dauvzickaite2021randomised}. Since the number
of observations is always smaller than the state space, $\rank(\bG)<n$ so by 
Theorem \ref{thm:kappa2_minimisation}, the preconditioner $\hat\bS_r$ 
minimises the condition number of the preconditioned system matrix among
preconditioners of the form
\[
\bL\transp\bD\inv\bL + \bX, \quad \rank(\bX) \leq r.
\]
As described in Section \ref{sec:rsvd}, it can be impractical to compute $\bG_r$ when
the size of the system is sufficiently large. Letting $\Omega\in\Rsr$ a Gaussian matrix of full rank we therefore define the Nystr\"om variant of both \eqref{eq:var:scaled}
and \eqref{eq:tildeS_0}:
\begin{subequations}
\begin{align}
& \scaledNys{\bS} = \bQ (I + \Nystrom{\bG}) \bQ\transp,\label{eq:var:nys}\\
& \nonscaledNys{\bS} = \bL\transp\bD\inv\bL + \Nystrom{(\bH\transp \bR\inv\bH)}.\label{eq:var:nys:nonscaled}
\end{align}
\end{subequations}
Next, we compare the PCG performance of the preconditioners \eqref{eq:pc:LDL}, 
\eqref{eq:var:nys} and \eqref{eq:var:nys:nonscaled} to an incomplete Cholesky
preconditioner (since $\bS$ will be sparse), for a specific problem instance.
 
\subsubsection{Heat Equation}

We now define the constituents \eqref{eq:boldmatrices} of the system matrix
for assimilation of the \emph{heat equation}:
\[
\frac{\partial u}{\partial t} = \frac{\partial^2 u}{\partial x^2}, \quad t \in [0,\infty), \quad x \in [0,20].
\]
We discretise the spatial operator by finite differences with $n=1000$ cells with size
$\Delta x = 2\cdot 10^{-2}$. We use a forward Euler scheme to discretise the time 
derivative with step size $\Delta t = 10^{-4}$ for a total of $N+1=100$ discrete times,
and let $r = \frac{\Delta t}{\Delta x^2}$ to define
\[
M = \begin{bmatrix}
0 & 0 & 0 & 0 & \ldots & 0 & 0\\
0 & 1-2r & r & 0 & \ldots & 0 & 0\\
0 & r & \ddots & \ddots & \ddots & \vdots & \vdots\\
%0 & 0 & \ddots & \ddots & \ddots & 0 & 0\\
\vdots & \vdots & \ddots & \ddots & \ddots & r & 0\\
0 & \ldots & 0 & 0 & r & 1-2r & 0\\
0 & \ldots & 0 & 0 & 0 & 0 & 0
\end{bmatrix}
\]
and set $M_i = M$, $i=1, \ldots, N$ to define $\bL$. We set homogeneous Dirichlet 
boundary conditions in space for all time steps. We observe half of the state vector so 
$H_i \in \mathbb{R}^{m\times n}$ has a single unit entry per row. For $i=1, \ldots, N$
we set $(H_i)_{m-j+1,j} = 1$, $j = 1,\ldots, m$ and all other elements set to zero to
define $\bH$. In practice, the covariance matrices $\bD$ and $\bR$ are typically 
ill-conditioned \cite{tabeart2022new,weston2014accounting}. To 
control the conditioning of $\bS$ we therefore construct $\bD$ by introducing a 
parameter $\tau_\bD$ and sampling $s$ points in log space from $10^{-\tau_\bD}$
to $10^{\tau_\bD}$ and setting
\begin{equation}\label{eq:bD_def}
\bD\inv = \diag(10^{-\tau_\bD}, \ldots, 10^{\tau_\bD}).
\end{equation}
We construct $\bR\inv$ in a similar fashion and set use the values 
$\tau_\bD = \tau_\bR = 1$. The parameters above lead to $\bS$ in \eqref{eq:Schur}
being a $10^5\times 10^5$ matrix with condition number $\kappa_2(\bS) = 8.1029\cdot 10^5$.\\

The purpose of this section is to compare various preconditioners for the 
Gauss-Newton iterate subproblems, so we generate $\bb\in\mathbb{R}^s$ at random
and focus on PCG for the solution of
\begin{equation}\label{eq:var:Sxb}
\bS \bdx = \bb.
\end{equation}

For the preconditioners $\tilde \bS_r$ and $\hat\bS_r$ we look at three values of $r$
of varying size relative to $n$:
\begin{equation}\label{eq:rank_variation}
r = \lfloor n \times c \rfloor, \quad c \in \{ 0.005, 0.02, 0.04\}
\end{equation}
corresponding to $0.05\%$, $2\%$ and $4\%$ of the problem size, respectively.\\

Figure \ref{fig:var:pc} shows the PCG convergence for the preconditioners above
along with the preconditioner $\bL\transp\bD\inv\bL$, an incomplete Cholesky 
preconditioner and an unpreconditioned run of the conjugate gradient metod.
Table \ref{table:var} shows the the computation time and relvative residual
achieved for these runs. 
It is clear that the preconditioner $\scaledNys{\bS}$ greatly outperforms $\nonscaledNys{\bS}$
for the same value of $r$. While increasing the rank $r$ leads to faster convergence
for both approaches, we clearly see that the scaled approach performs much better. We
even see that $\scaledNys{\bS}$ with a rank 500 truncation outperforms $\nonscaledNys{\bS}$
for a rank 4000 truncation of the observation term. This supports the theory developed
above. Neither the preconditioner 
$\bL\transp\bD\inv\bL$ or the incomplete Cholesky preconditioners are close to reaching a
small residual, as we also see from Figure \ref{fig:var:pc}. We emphasise that
there may be problem instances for which these approaches could be viable.

\begin{figure}
\centering
\includegraphics[scale=0.6]{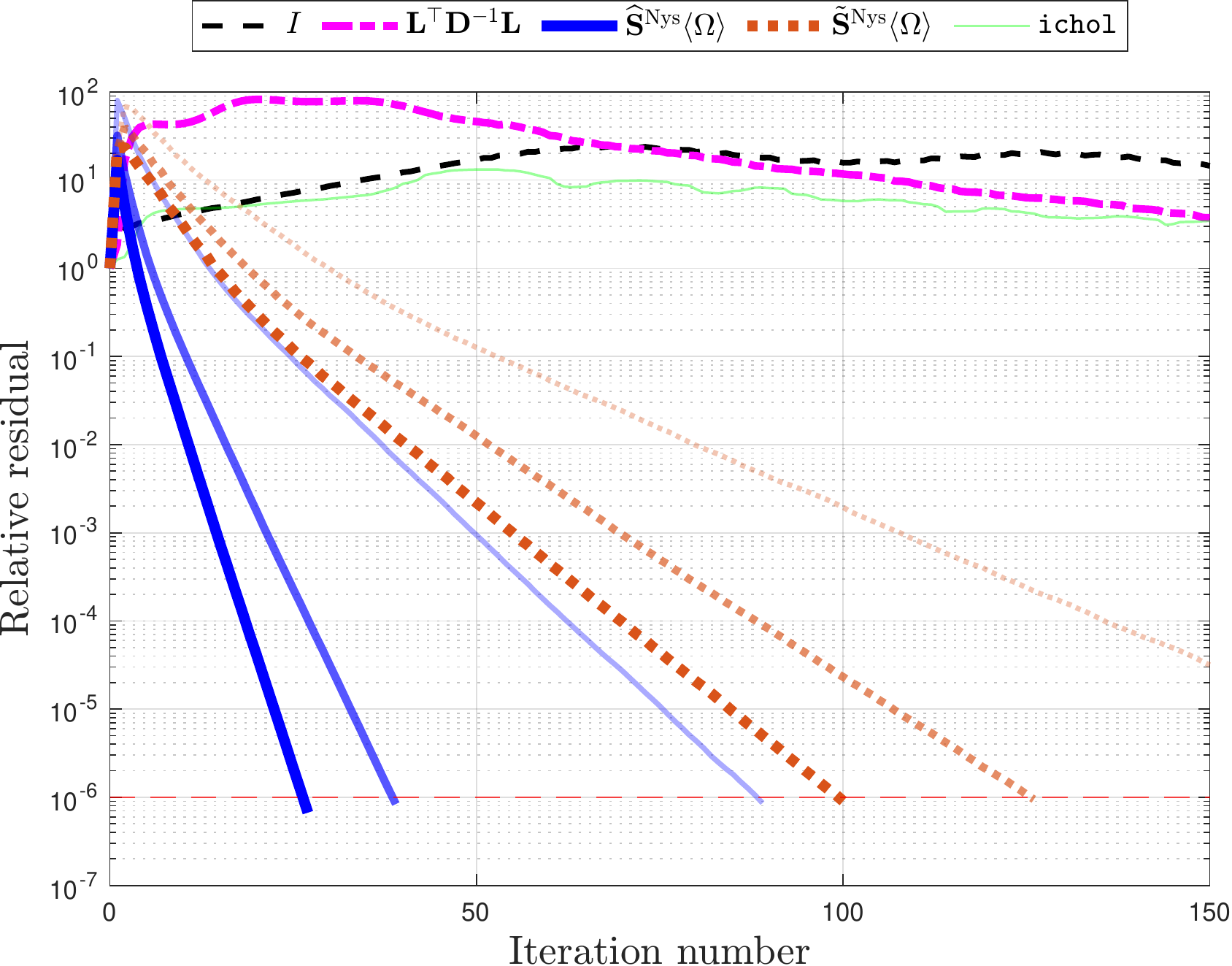}
\caption{PCG iteration count for the solution \eqref{eq:var:Sxb} for different preconditioners. 
Thicker lines correspond to higher values of the truncation rank $r$ in \eqref{eq:rank_variation}.}
\label{fig:var:pc}
\end{figure}

{
\setlength{\tabcolsep}{1.1pt}
\begin{table}[tbhp]
\footnotesize
\caption{Timings, residuals and PCG iteration counts for the solution of \eqref{eq:var:Sxb}
for various preconditioners. The residual tolerance is $10^{-6}$  and the iteration limit is $150$.}\label{table:var}
\centering
\begin{tabular*}{\textwidth}{@{\extracolsep{\fill}} l *{10}{r} }
\toprule 
\multirow{2.2}{*}{Preconditioner}  & \multirow{2.2}{*}{$I\;\;\;$} & \multirow{2.2}{*}{$\bL\transp\bD\inv\bL$} & \multirow{2.2}{*}{\texttt{ichol}} & \multicolumn{3}{c}{$\nonscaledNys{\bS}$ (eq. \eqref{eq:var:nys:nonscaled})} & \multicolumn{3}{c}{$\scaledNys{\bS}$ (eq. \eqref{eq:var:nys})}\\ \cmidrule(l{2pt}r{2pt}){5-7} \cmidrule(l{2pt}r{0pt}){8-10}
\multicolumn{4}{c}{} & $r=500$ & $r=2000$ & $r=4000$ & $r=500$ & $r=2000$ & $r=4000$ \\
\midrule
\csvreader[
head to column names,
]{results_N=100000_tex.csv}{}{\Preconditioner & \nopc & \LDL & \ichol & \nonscaleda & \nonscaledb & \nonscaledc & \scaleda & \scaledb & \scaledc \\}
\\[-\normalbaselineskip]\hline
\end{tabular*}
\end{table}
}

%%% Approx L and parallelism
One of the limitations of our framework is that $A$ in $S=A+B$ must be known
and factorisable. In the setting above, $A = \bL\transp\bD\inv\bL$, which
may be difficult to factorise for certain forward operators and covariances.
As mentioned, the literature has explored adopting a preconditioner of the form 
\eqref{eq:pc:LDL} but substituting $\bL$ for an approximation whose inverse is
easier to compute \cite{fisher2017parallelization}. 
\cite{dauvzickaite2021time} adopts a similar methodology by approximating
$\bL\inv$ by an identity plus low-rank matrix. Both approaches result in the
construction of an approximate positive definite $\tilde A \approx A$ which is easier
to store or parallelise. As mentioned in Section \ref{sec:splitting}, we extend
the Bregman divergence framework to handle approximate factorisations $\tilde A
\approx A$ in a sequel.
\section{Summary \& Outlook}\label{sec:summary}

In this paper, we have presented several preconditioners for solving $Sx=b$ where
the matrix $S$ is a sum of $A\in\Snpp$ and $B\in\Snp$. The first proposed
preconditioner, $\scaled{S}$ in \eqref{eq:S_preconditioners:scaled}, is chosen as
a sum of a $A$ and a low-rank matrix, is the minimiser of a Bregman divergence. 
We have shown how this
preconditioner can be recovered from a scaled Frobenius norm minimisation problem,
and that, when $\rank(B)<n$, it is optimal in the sense that it minimises the condition number of the preconditioned matrix for preconditioners on its form, i.e., positive 
definite plus low rank. We have also presented variants of $\scaled{S}$ based on
randomised low-rank approximations. We also established a link between the Bregman divergence  and  the Nystr\"om approximation. The equivalence between \emph{single
pass} randomised SVD and the Nystr\"om approximation was also, to the best of the
authors' knowledge, not documented before. Our numerical experiments
illustrate how our theoretical results vary for different choices of $A$ and $B$,
and for different practical choices of low-rank approximations, i.e., randomised,
randomised with power iterations, and a randomised Nystr\"om approximation. We
also illustrated the potential of our proposed framework for preconditioning a
system stemming from a variational data assimilation problem.\\

% Future work
The work in this paper offers many avenues for future research. The invariance
property of the Bregman divergence has shown to be a valuable property in
interpreting the quality of the different preconditioners studied in this work.
Knowledge of the structure of $S$ as the sum of $A$ and $B$ was essential in
utilising this invariance cf. \eqref{eq:scaled_invariance_property}.
Understanding the choice of splitting of the matrix $S$ discussed in Section
\ref{sec:splitting} could help in developing new approximation methods, for instance
when the positive definite part of $S$ is not known or easily factorisable. In
\cite{higham2019new}, the authors developed a preconditioner based on a low-accuracy
LU factorisation for ill-conditioned systems, which could yield insights in this 
direction. We could also investigate preconditioners of other forms, such
as either hierarchical matrices, leading to different constraints in a Bregman
divergence minimisation problem such as in Section \ref{sec:diag_pcs}. It could
also be fruitful to see to which extent the framework introduced here generalises
approaches such as in \cite{frangella2021randomized}. The link between
the Bregman divergence and the Nystr\"om approximation may also help to explain
why this approximation is suitable for a given
application since, as we saw in Section \ref{sec:numerical_results},
it always appears to improve PCG convergence over a randomised SVD. However,
if the computational budget of a given application allows for a randomised power range
approximation, this effort appears to be well compensated in terms of PCG performance.
Saddle-point formulations of the inner loop 4D-VAR problem have also been studied
due to their potential for parallelisation 
\cite{tabeart2021saddle,fisher2017parallelization,tabeartstein}. Low-rank approaches
have been explored in this context before \cite{freitag2018low}, so it would be 
natural to adapt the framework presented above to this setting.\\

Other divergences commonplace in information geometry could also be interesting
to analyse, e.g., the Itakura-Saito divergence or the divergence associated with the 
negative Shannon entropy. Deeper connections could be sought between this 
domain and other problems in numerical linear algebra beyond preconditioning.

\section{Acknowledgements}
This work was supported by the Novo Nordisk Foundation under grant number NNF20OC0061894. 

\appendix

\section{Synthetic Data: Generalised Eigenvalues}

In Figure \ref{fig:gevps:synt} we show the generalised eigenvalues for various
preconditioners using the experimental setup from Section \ref{sec:synthetic_data}.

\begin{figure}[h!]
\centering
    \begin{subfigure}[t]{0.45\textwidth}
        \includegraphics[scale=0.30]{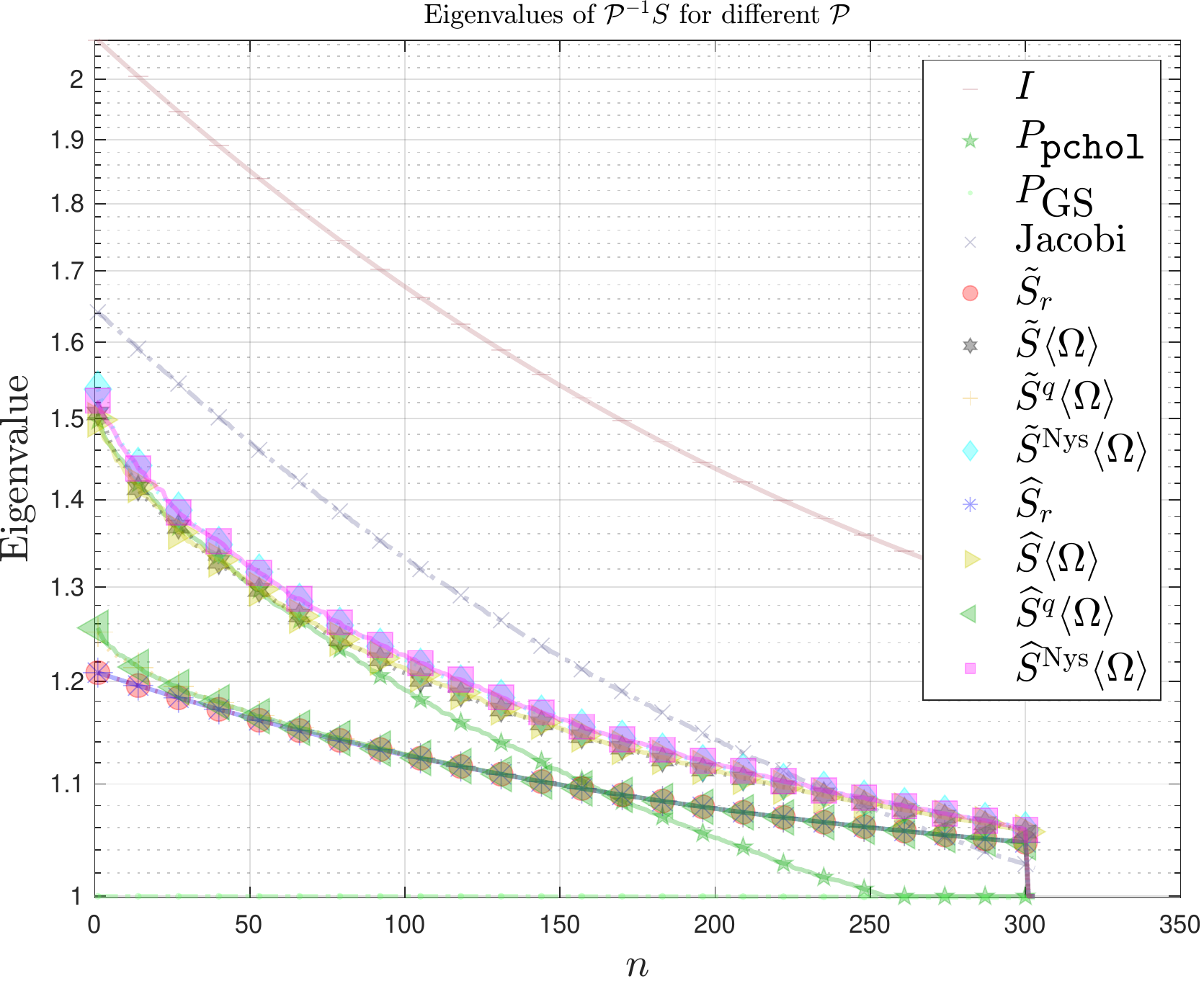}
        \caption{$A=1$, $B=1$.}
    \end{subfigure}
    \begin{subfigure}[t]{0.45\textwidth}
        \includegraphics[scale=0.30]{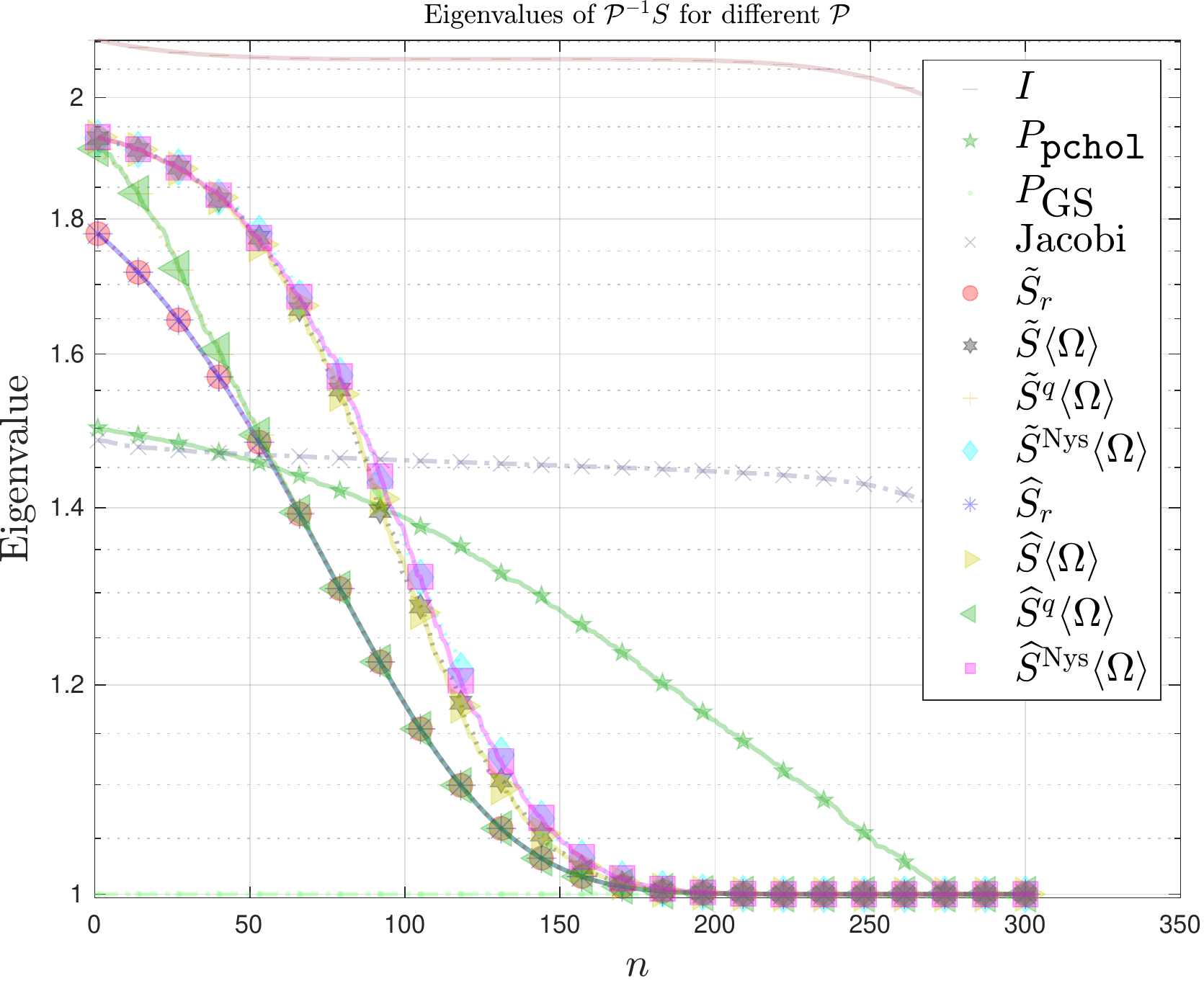}
        \caption{$A=1$, $B=2$.}
    \end{subfigure}
    \begin{subfigure}[t]{0.45\textwidth}
        \includegraphics[scale=0.30]{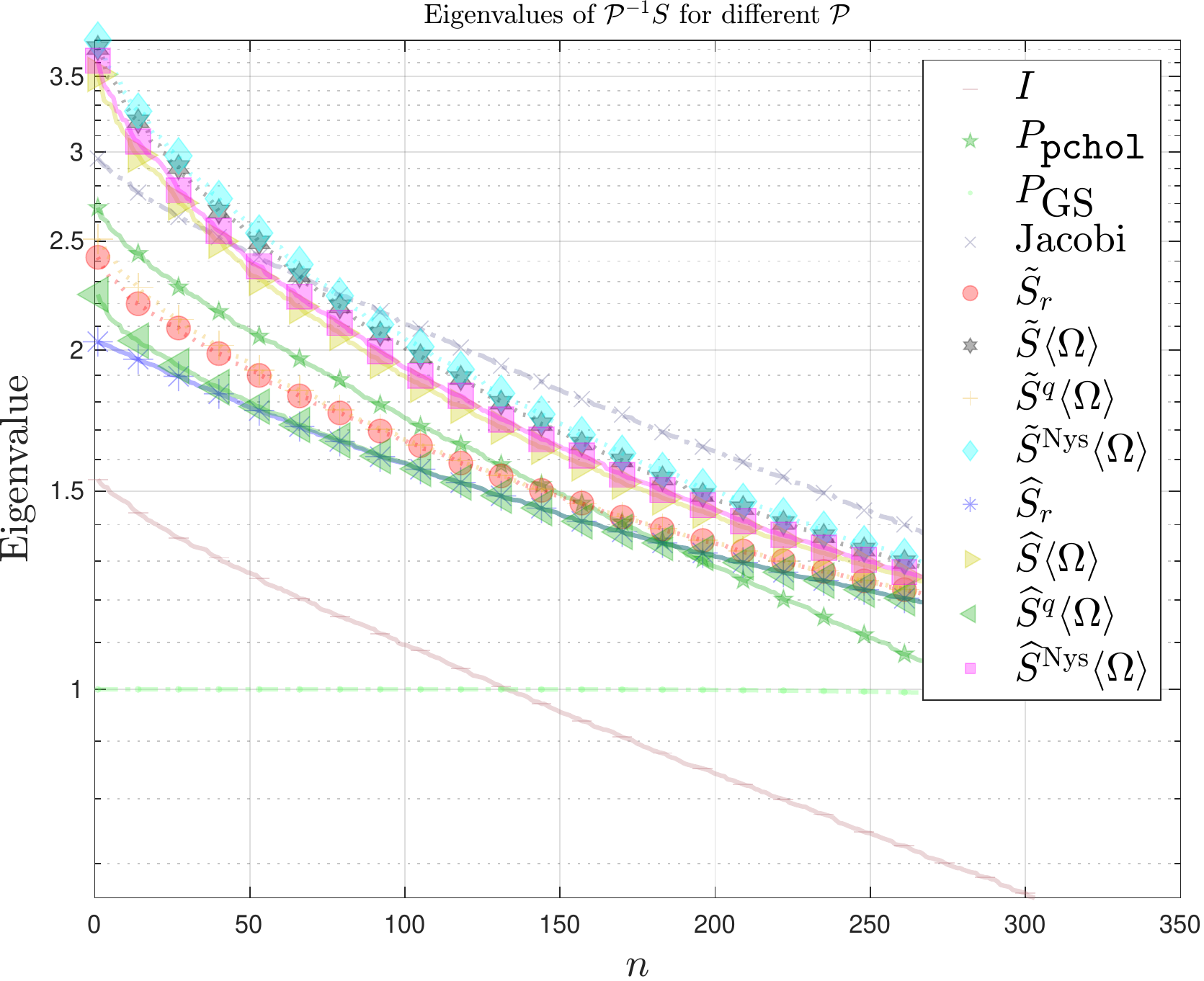}
        \caption{$A=2$, $B=1$.}
    \end{subfigure}
    \begin{subfigure}[t]{0.45\textwidth}
        \includegraphics[scale=0.30]{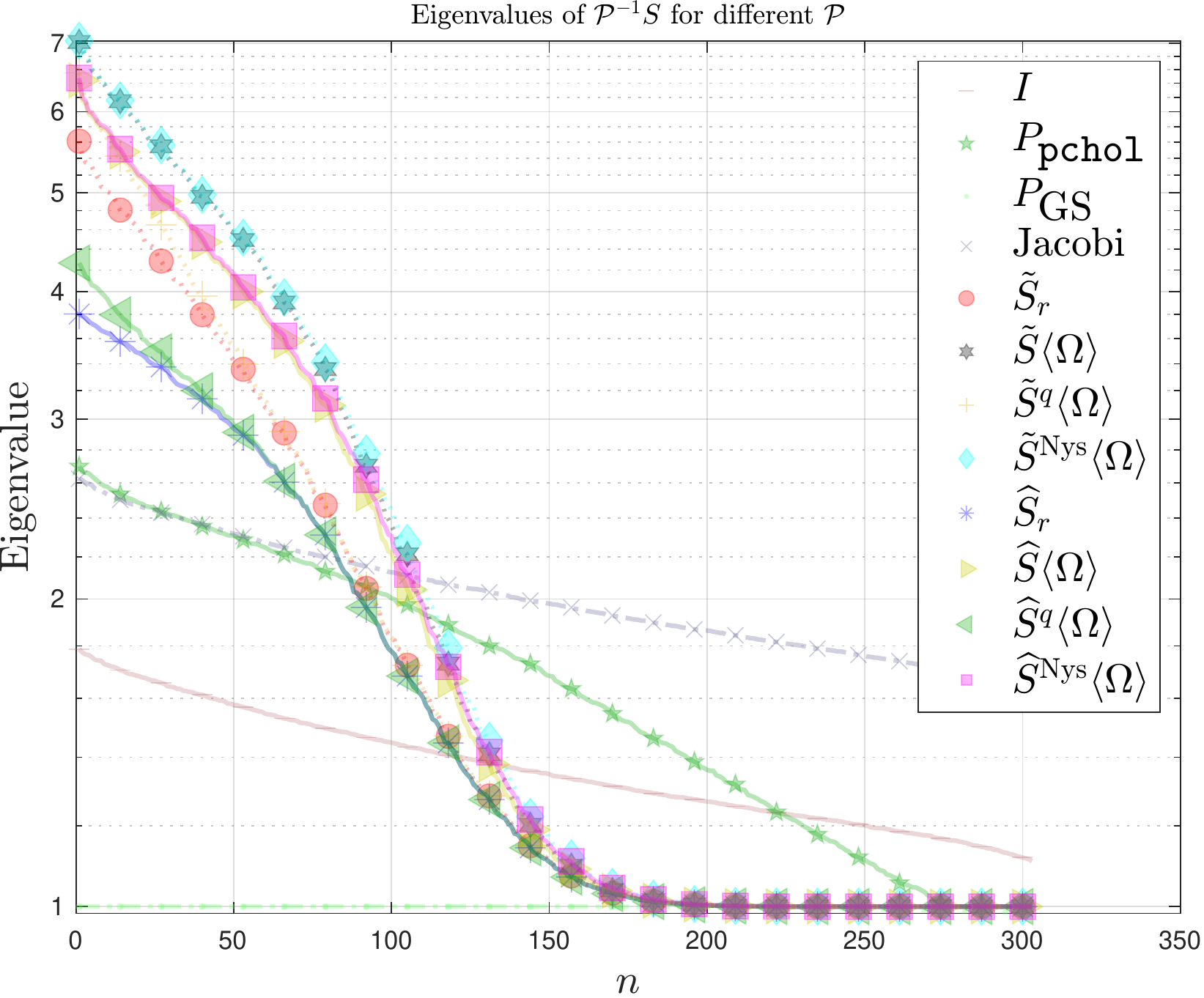}
        \caption{$A=2$, $B=2$.}
    \end{subfigure}
    \begin{subfigure}[t]{0.45\textwidth}
        \includegraphics[scale=0.30]{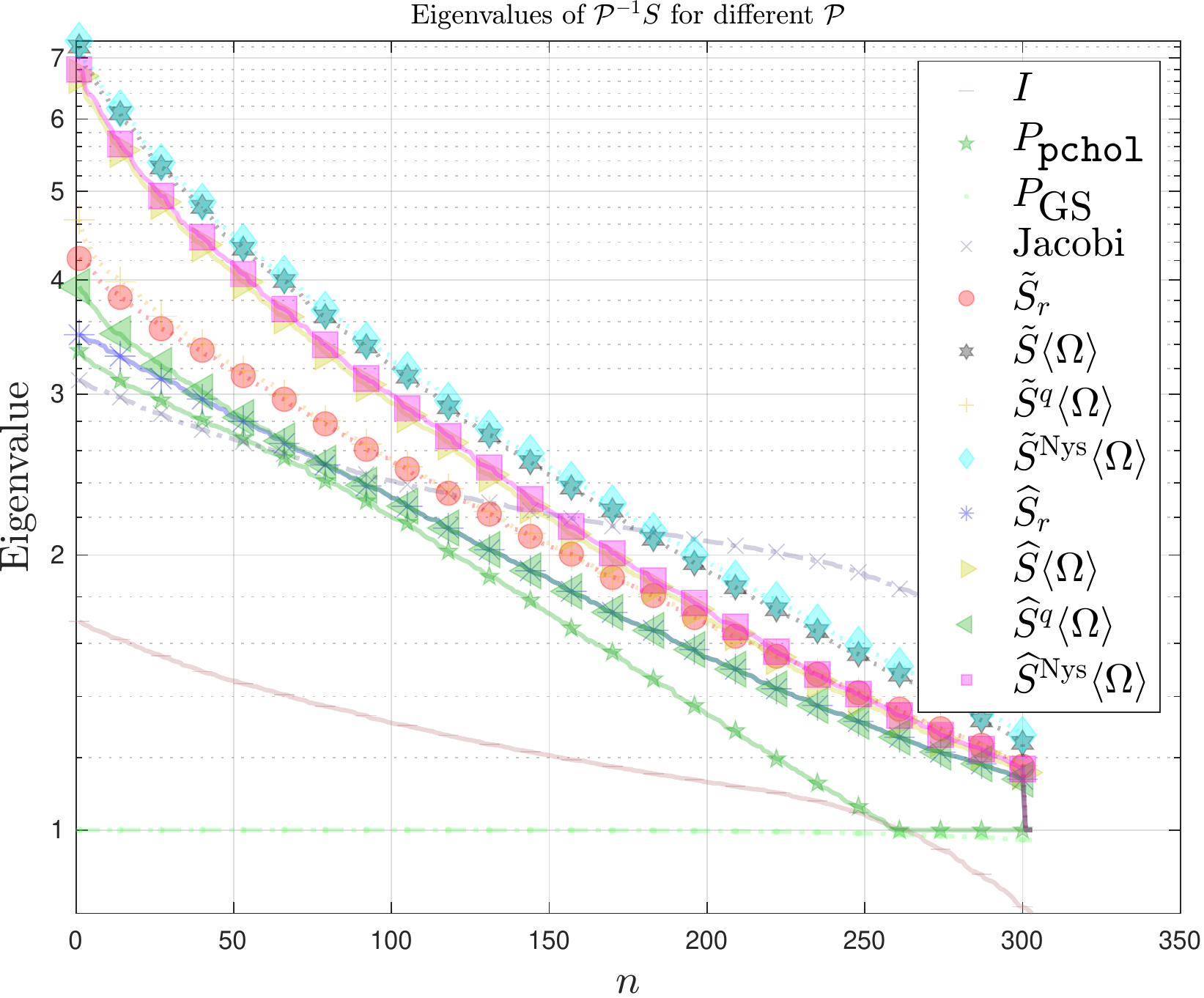}
        \caption{$A=3$, $B=2$.}
    \end{subfigure}
    \begin{subfigure}[t]{0.45\textwidth}
        \includegraphics[scale=0.30]{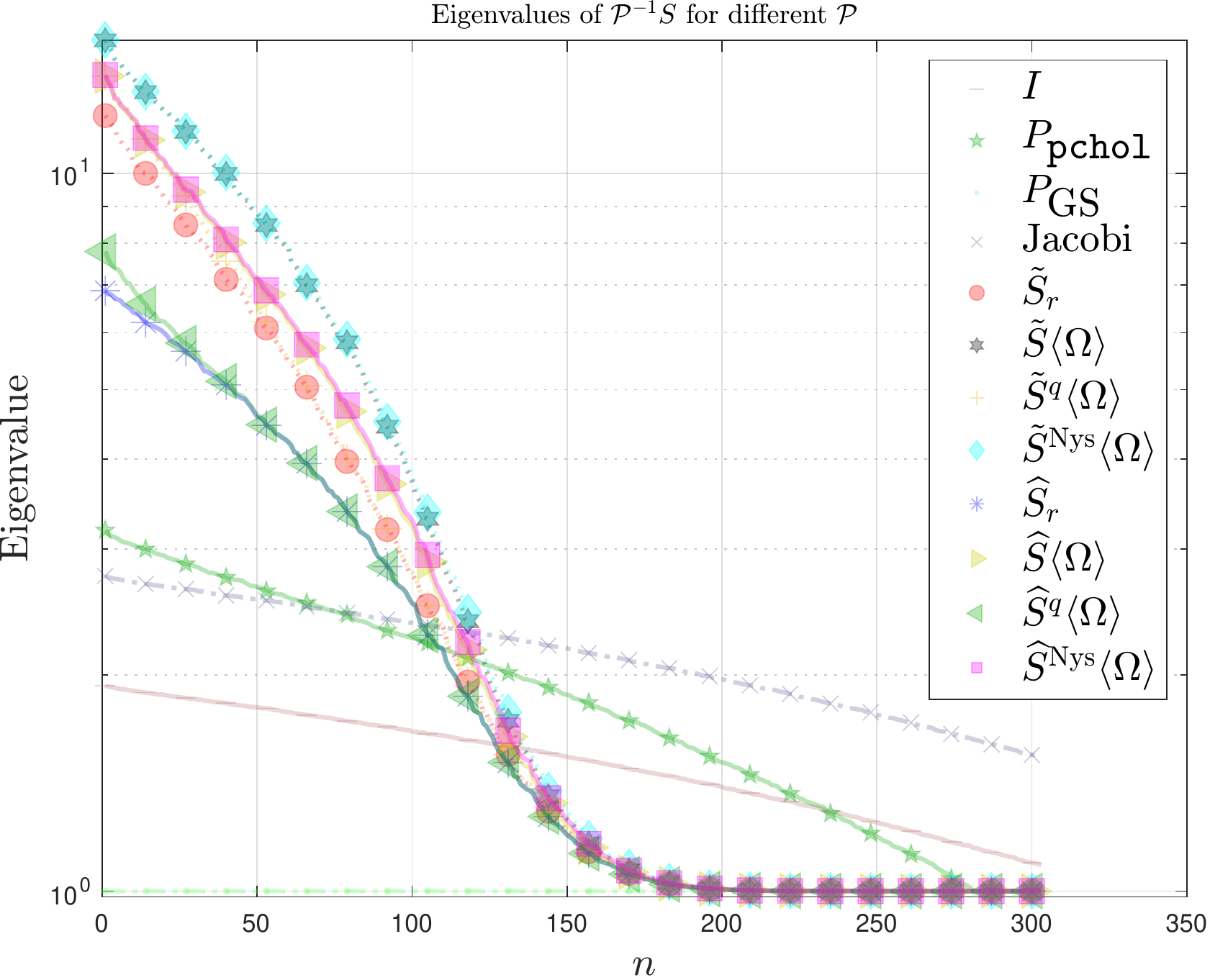}
        \caption{$A=3$, $B=1$.}
    \end{subfigure}
    \begin{subfigure}[t]{0.45\textwidth}
        \includegraphics[scale=0.30]{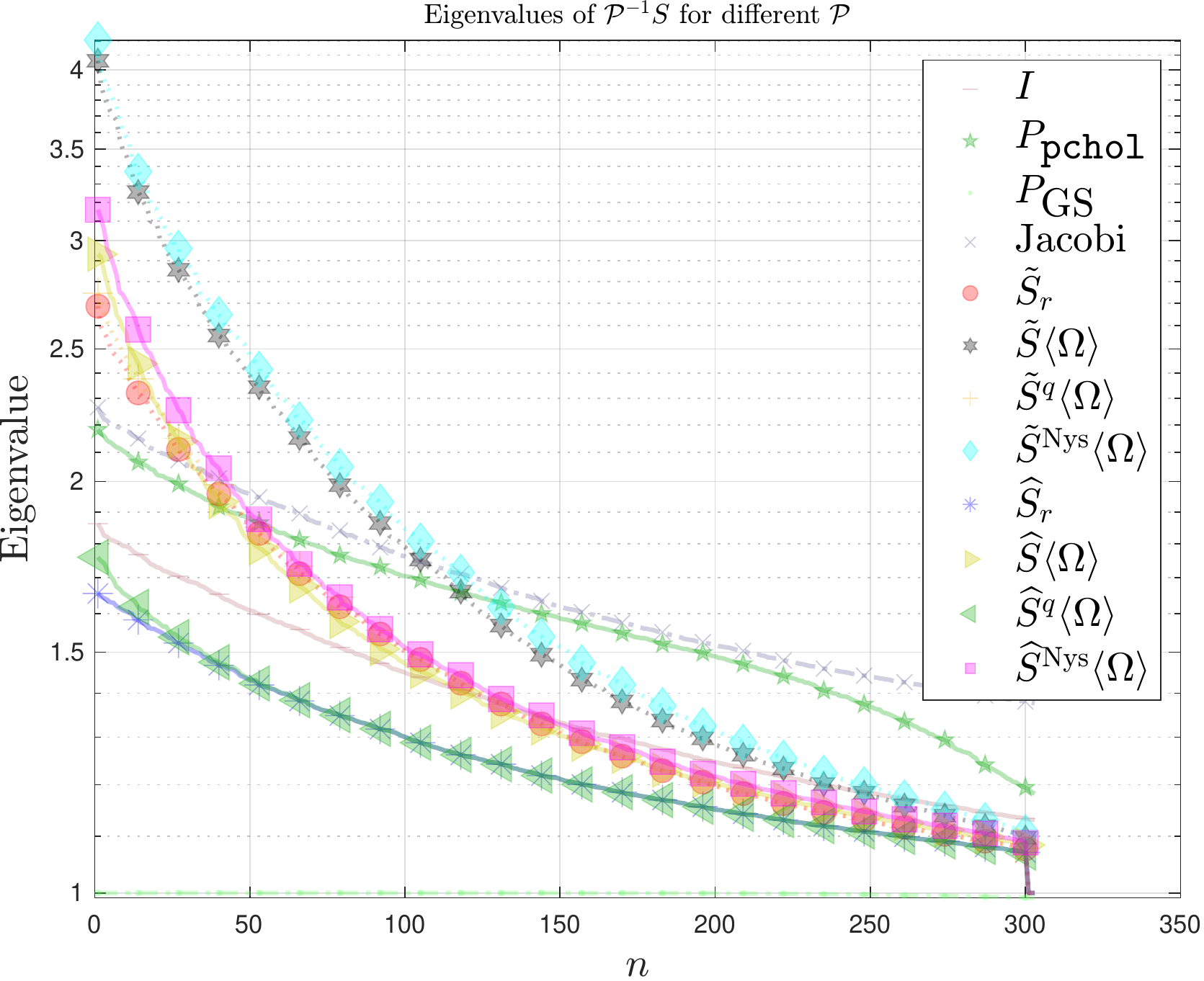}
        \caption{$A=4$, $B=1$.}
    \end{subfigure}
    \begin{subfigure}[t]{0.45\textwidth}
        \includegraphics[scale=0.30]{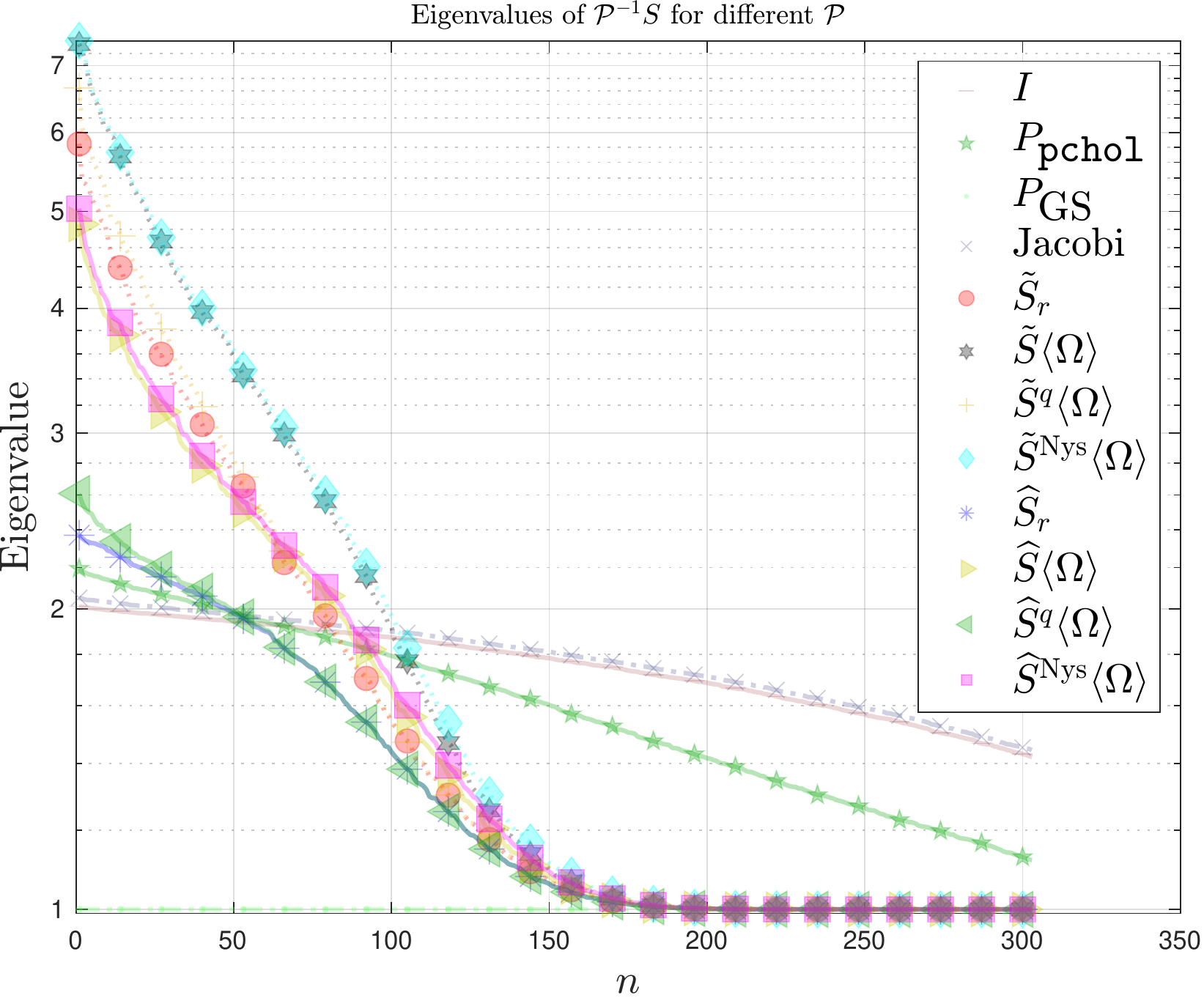}
        \caption{$A=4$, $B=2$.}
    \end{subfigure}
    \caption{Generalised eigenvalues for various preconditioners $P$. The graphs correspond to the choices of $A$ and $B$ given by the label in Table \ref{table:AB_params}.}
    \label{fig:gevps:synt}
  \end{figure}

\addcontentsline{toc}{section}{References}

\bibliographystyle{siam}
\bibliography{main}

\end{document}